\documentclass[11pt]{article}
\usepackage[a4paper,margin=0.9in]{geometry}
\usepackage[T1]{fontenc}
\usepackage{lmodern}
\usepackage{amsmath,amssymb,amsthm,mathtools}
\usepackage{booktabs,array,xcolor,mathtools}
\usepackage{graphicx,microtype,needspace}
\usepackage[colorlinks=true,allcolors=black]{hyperref}
\hypersetup{pdftitle={Isolated singularities of solutions to the Yamabe equation in higher dimensions II},pdfauthor={Zheng-Chao Han, Qinfeng Jiang, Hua-Yang Wang, Jingang Xiong, Lei Zhang}}
\allowdisplaybreaks[2]
\numberwithin{equation}{section}
\newtheorem{theorem}{Theorem}[section]
\newtheorem{proposition}[theorem]{Proposition}
\newtheorem{lemma}[theorem]{Lemma}
\theoremstyle{remark}
\newtheorem{remark}[theorem]{Remark}
\newcommand{\R}{\mathbb R}
\newcommand{\RP}{\mathbb{RP}}
\newcommand{\cJ}{\mathcal J}
\newcommand{\cR}{\mathcal R}
\newcommand{\cC}{\mathcal C}
\newcommand{\cK}{\mathcal K}
\newcommand{\cL}{\mathcal L}
\newcommand{\cH}{\mathcal H}
\newcommand{\cN}{\mathcal N}
\newcommand{\cX}{\mathcal X}
\newcommand{\dd}{\,\mathrm d}
\newcommand{\ip}[2]{\langle #1,#2\rangle}
\newcommand{\Scal}{\operatorname{Scal}}

\newcommand{\arxiv}[1]{\href{https://arxiv.org/abs/#1}{arXiv:\nolinkurl{#1}}}
\newcommand{\step}[1]{\par\medskip\noindent\textbf{#1}\enspace}
\newcommand{\Fop}{\mathcal F}

\usepackage[markup=underlined]{changes} 
\definechangesauthor[name={Z. C.}, color=orange]{Z.C}

\title{\bfseries Isolated singularities of solutions to the Yamabe equation
in higher dimensions II}
\author{Zheng-Chao Han,\quad Qinfeng Jiang,\quad
Hua-Yang Wang\thanks{H.-Y. Wang was partially supported by NSFC grant 12601183.}\\[2mm]
Jingang Xiong\thanks{J. Xiong was partially supported by NSFC grant 12325104.},
\quad Lei Zhang\thanks{Lei Zhang was partially supported by Simons Foundation
Grant SFI-MPS-TSM-00013752.}}
\date{}

\begin{document}
\maketitle

\begin{abstract}
This is the second of two papers on isolated singularities of positive
solutions to the Yamabe equation on manifolds with nonflat background
metrics. In the first paper, we proved that, in dimensions $3\le n\le29$,
every isolated singularity is either removable or the solution is
asymptotic to a radial model singular solution in the flat metric on the Euclidean space with origin as its singular point---a Fowler solution. In the present paper, we show that this
dimension range is sharp. More precisely, for each $n\ge30$, we construct
a smooth metric on $\mathbb{RP}^n$ and a positive solution of the
corresponding Yamabe equation having an isolated singularity, near which
the solution blows up along a sequence at a rate much faster than that
of Fowler solutions. Moreover, the singular solution defines a complete
conformal metric of constant positive scalar curvature and the metric agrees with the round metric to infinite order at the singular point.
\end{abstract}

\section{Introduction}\label{sec:introduction}

Let \(g_0\) be the standard quotient metric on \(\mathbb{RP}^n\), induced by the round metric on \(\mathbb{S}^n(1/2)\) under the antipodal identification. For a Riemannian metric \(g\), let \(\operatorname{Scal}_g\) denote its scalar curvature, and define the conformal Laplacian by
$$
L_g:=\Delta_g-c(n)\operatorname{Scal}_g,
\qquad
c(n):=\frac{n-2}{4(n-1)}.
$$
We write \(d_g\) for the distance induced by \(g\).

The main result of this paper is the following.

\begin{theorem}\label{thm:main}
For every integer \(n\ge 30\) and every point \(o\in\mathbb{RP}^n\), there exist a smooth Riemannian metric \(g_\infty\) on \(\mathbb{RP}^n\) agreeing with the round metric to infinite order at \(o\)  and a positive smooth function
$u_\infty $ on $\mathbb{RP}^n\setminus\{o\}$
satisfying the Yamabe equation
\begin{equation}\label{eq:yamabe}
-L_{g_\infty}u_\infty=n(n-2)u_\infty^{\frac{n+2}{n-2}}
\qquad\text{in }\mathbb{RP}^n\setminus\{o\},
\end{equation}
such that
\begin{equation}\label{eq:main-lower}
u_\infty(x)\ge
c d_{g_\infty}(x,o)^{-\frac{n-2}{2}}
\end{equation}
for some constant \(c>0\), and
\begin{equation}\label{eq:main-limsup}
\limsup_{x\to o}
d_{g_\infty}(x,o)^{n-2}u_\infty(x)=\infty.
\end{equation}
\end{theorem}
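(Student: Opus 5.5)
We will follow the Brendle--Marques style gluing strategy used for compactness counterexamples of the Yamabe problem in dimensions $n\ge 25$, adapted to build a single singular solution. We work in conformal normal coordinates around $o$. Via stereographic projection from $o$, or rather via the cylinder coordinates $t=-\log|x|$, the equation near $o$ becomes an ODE/PDE on $\R\times\mathbb{S}^{n-1}$. Because we work on $\mathbb{RP}^n$, we impose symmetry under $x\mapsto -x$, i.e. the antipodal action on $\mathbb{S}^{n-1}$. This symmetry removes the translation modes of the linearized operator and leaves only dilations to control.

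The building blocks are standard bubbles $U_{\varepsilon}(x)=\bigl(\varepsilon/(\varepsilon^2+|x|^2)\bigr)^{\frac{n-2}{2}}$ at scales $\varepsilon_k=\lambda_k\to0$, centered at $o$. The metric is taken to be $g=\exp(h)$ with $h=\sum_k \mu_k\,\chi(|x|/\rho_k) H(x)$. Here $H$ is a trace-free homogeneous polynomial tensor of degree $d\ge 4$ of Brendle type, with $\sum_i x_i H_{ij}=0$ and $\partial_iH_{ij}=0$, and the $\mu_k$ and $\rho_k$ decay superpolynomially relative to $\varepsilon_k$. This makes $g$ smooth and gives flatness to infinite order at $o$. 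Away from these bumps, $g$ is $g_0$.

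\textbf{Step 1 (single scale).} For each scale we perform the Lyapunov--Schmidt reduction around $U_\varepsilon$ in the perturbed metric. We invert the linearized operator $\Delta+n(n+2)U^{4/(n-2)}$ modulo its kernel. Under the even symmetry, the kernel is spanned only by $\partial_\varepsilon U_\varepsilon$.

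\textbf{Step 2 (reduced energy).} We compute the reduced energy $F(\varepsilon)$. As in Brendle's computation, the leading correction is a multiple of $-\mu^2\varepsilon^{2d}\int |H|^2(\ldots)$. Its coefficient, a ratio of Beta-type integrals, has the favorable sign exactly when $n\ge 30$ once the parity restriction forces $d$ even. Verifying this sign threshold is the key dimensional input. We would mimic Brendle's polynomial computation, replacing $n\ge 52$ by the reflection-symmetric version, and the threshold $30$ should emerge from positivity of a quadratic form in $d$ and $n$. The nondegenerate critical point of $F$ then gives a genuine solution concentrating at scale $\varepsilon_k$ inside the $k$-th annulus.

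\textbf{Step 3 (infinitely many scales).} We glue infinitely many such bubbles at scales $\varepsilon_k\to0$, spaced so that interactions $(\varepsilon_{k+1}/\varepsilon_k)^{(n-2)/2}$ are negligible compared with the local energy gains. In cylinder coordinates this gives a solution that looks like a chain of bubbles, and hence a Delaunay-type lower bound \eqref{eq:main-lower}. To get \eqref{eq:main-lower} rather than a sum of isolated spikes, we would glue onto a base Fowler-type solution of the round $\mathbb{RP}^n$. The extra bubbles then sit on top of the Fowler necks, and the reduction is done on weighted spaces on the cylinder. Completeness follows from \eqref{eq:main-lower}. The rate \eqref{eq:main-limsup} follows because at $|x|\sim\varepsilon_k$ the $k$-th bump has height $\varepsilon_k^{-(n-2)/2}$ while being placed so that $d^{n-2}u\to\infty$. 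This requires bubbles much more concentrated than the Fowler neck, with scale $\varepsilon_k\ll r_k$ at radii $r_k$. The condition $r_k^{n-2}\varepsilon_k^{-(n-2)/2}\to\infty$ then holds.

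\textbf{Main obstacle.} The main obstacle is solving the infinite reduced system uniformly. It needs weighted function spaces on $\R\times \mathbb{S}^{n-1}/\pm$ with weights between the indicial roots, and a fixed-point/degree argument on the product of the dilation parameters. We would first try a contraction in $\ell^\infty$ of relative parameters, using the strict nondegeneracy from Step 2.
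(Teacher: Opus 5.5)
There is a genuine gap, and it is in Steps 1--2. You center the bubbles $U_\varepsilon$ at the puncture $o$ and use $x\mapsto -x$ to remove the translation modes. But a bubble centered at $o$ lives exactly on the Fowler scale: $|x|^{(n-2)/2}U_\varepsilon(x)=\bigl(\varepsilon|x|/(\varepsilon^2+|x|^2)\bigr)^{(n-2)/2}\le 2^{-(n-2)/2}$ for every $\varepsilon$. In $t=-\log|x|$ it is just the zero-energy soliton $(2\cosh(t-\log\varepsilon^{-1}))^{-(n-2)/2}$.

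Two consequences follow. Any configuration built from centered bubbles and a Fowler background stays $O(d^{-(n-2)/2})$, so $d^{n-2}u\to0$ and \eqref{eq:main-limsup} is out of reach. Also, at $|x|\sim\varepsilon$ such a bubble is comparable to the Fowler solution itself, so there is no small interaction around which to run a Lyapunov--Schmidt reduction.

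Your last paragraph silently switches to bubbles of scale $\varepsilon_k\ll r_k$ located at radius $r_k$. That is what the paper does: insertion points $a_j=r_je$, scales $\mu_j\ll r_j$, and peaks with $r_j^{n-2}V_j(x_j)\ge 2^{2j}$. It is incompatible with your reduction, however. The map $x\mapsto -x$ moves such a bubble to $-a$, so it no longer constrains the translation modes; the antipodal identification of $\RP^n$ only enters through the boundary matching on $\partial B_1$. You therefore need the full $(n+1)$-parameter reduction in $(\lambda,b)$, and nondegeneracy of the leading energy in the center directions too. The paper gets this from $D_b^2F(\sqrt{s_n},0)>0$ via the Brendle--Marques angular integrals.

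Two further ingredients fail as stated. First, for a homogeneous tensor of degree $d$, scaling gives $F(\lambda,0)=\lambda^{2d}F(1,0)$, which has no critical point in the scale. This is why the inhomogeneous profile $\cH=f(|y|^2)H(y)$ with $f(s)=80-50s+7s^2$ is needed; it yields $F(\lambda,0)=-C_{n,W}I_n(\lambda^2)$. Second, your metric $\sum_k\mu_k\chi(|x|/\rho_k)H(x)$ has degree-$d$ Taylor coefficient $\sum_k\mu_kH\neq0$ at $o$, so it does not agree with $g_0$ to infinite order. The paper supports each perturbation in a ball $B_{\rho_j}(a_j)$ that avoids $0$.

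You also misidentify the origin of the threshold. In the paper, $n\ge30$ is not a sign condition on Beta ratios but an order separation. The amplitude $\mu^{27/4}$ in bubble coordinates is chosen so that the degree-six part of the metric is still $O(\mu^{3/4})$ in every $C^k$. The resulting metric energy $\mu^{27/2}F/2$ must dominate the bubble--Fowler interaction $O(\mu^{(n-2)/2})$, which requires $(n-29)/2>0$.

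The ``infinite reduced system'' you name as the main obstacle is avoided altogether. The paper inserts one bubble at a time into an exact solution, choosing $\mu_j$ only after $(\gamma_{j-1},V_{j-1})$ and its inverse bound are fixed. The increments are then summable, and no uniformity across scales is needed.

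What must be propagated is invertibility of the augmented operator $\widehat\cL_{\gamma,V}$ on $\R\oplus H^1_\delta$. The extra phase variable is forced because both radial Floquet exponents of the Fowler linearization vanish: the Jacobi fields are $v_*'$, which is bounded, and $v_1$, which grows at most linearly. So there is no weight ``between the indicial roots'' that makes the radial operator an isomorphism on a weighted space alone.

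Finally, the lower bound \eqref{eq:main-lower} inside each insertion ball comes from a maximum-principle comparison (Lemma~\ref{lem:local-lower}) plus summability of the exterior increments, not from a chain-of-bubbles picture.
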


We make the following remarks.

\begin{itemize}

\item[-]  In Part~I~\cite{YamabePartI}, we completed the classification of isolated singularities of positive solutions to the Yamabe equation in punctured balls, for arbitrary smooth Riemannian metrics in dimensions \(3\le n\le 29\): every such singularity is either removable or the solution is asymptotic to a Fowler solution---a radial model singular solution in the flat metric on the Euclidean space with origin as its singular point. See also Marques~\cite{Marques2008}  in dimensions $3\le n\le5$ and
Xiong--Zhang~\cite{XiongZhang2022} in dimension $6$.  By \eqref{eq:main-limsup}, the above theorem shows that this classification cannot be extended to dimensions \(30\) and higher.

\item[-]  We will see in the proof of the main theorem that, for any \(\tau>0\),
\[
  \lim_{x\to 0}|x|^{-\tau}|g_\infty(x)-g_0(x)|=0
\]
in local coordinates centered at \(o\). This shows that the upper bound assumption in Han--Xiong--Zhang~\cite[Theorems~1.2 and 1.3]{HanXiongZhang2023} is necessary when \(n\ge 30\).

\item[-] By \eqref{eq:main-lower} and \eqref{eq:yamabe}, the conformal metric
\(u_\infty^{4/(n-2)}g_\infty\)
is complete and has scalar curvature \(4n(n-1)\). To the best of our knowledge, this is the first example of a
complete conformal metric of constant positive scalar curvature
whose conformal factor, relative to a smooth background metric,
grows faster than the Fowler rate along a sequence approaching
an isolated singularity. See
Schoen~\cite{Schoen1988}, Mazzeo--Smale~\cite{MazzeoSmale1991}, Mazzeo--Pollack--Uhlenbeck~\cite{MazzeoPollackUhlenbeck1996}, Mazzeo--Pacard~\cite{MazzeoPacard1999}, Byde~\cite{Byde2003}, Santos~\cite{Santos2010}, and others for the singular Yamabe problem. In a forthcoming paper, we  prove that if $(M,g)$ is a smooth,
connected, closed Riemannian manifold of dimension $n\ge3$,
has positive Yamabe constant, and is not conformally diffeomorphic
to the round sphere, then, for every nonempty finite set
$\{P_1,\ldots,P_m\}\subset M$, there exists a positive smooth
function $u$ on $M\setminus\{P_1,\ldots,P_m\}$ such that
$u^{4/(n-2)}g$ is complete and has constant positive scalar
curvature, and $u$ has Fowler asymptotics at each $P_i$.

\item[-]  For a fixed positive Yamabe conformal class on a closed manifold not
conformally diffeomorphic to the round sphere,
Khuri--Marques--Schoen~\cite{KMS2009} proved compactness through
dimension $24$, using the positive mass theorem in the final global
argument. In dimensions $n\ge52$, Brendle~\cite{Brendle2008} constructed smooth metrics on the sphere with a noncompact sequence of \emph{smooth} solutions, developing exactly one bubble;
Brendle--Marques~\cite{BrendleMarques2009} extended the construction to
$25\le n\le51$. For every $n\ge25$, Gong--Li~\cite{GongLi2025} constructed
sequences of positive solutions on the sphere for a fixed
smooth background metric, with both the number of bubbles
and the volumes of the associated conformal metrics
tending to infinity.
In their construction, the bubble centers approach a single
point, and the conformal factors converge to zero locally
uniformly away from that point.
\end{itemize}

\subsection*{Overview of the proofs}

Our construction proceeds by an iterative Lyapunov–Schmidt reduction, starting from a Fowler solution on the punctured round projective space. At each stage, we perturb the metric locally and construct a new exact solution with an additional bubble, while retaining Fowler asymptotics sufficiently close to the puncture. The insertion points approach the puncture. The metrics converge smoothly on the whole manifold, and the solutions converge smoothly on compact subsets away from the puncture. The limiting solution retains the peaks created at successive stages and violates the Fowler upper bound. Its behavior therefore differs from the Fowler asymptotics of the isolated ends in the constructions of Schoen~\cite{Schoen1988} and Mazzeo–Pacard~\cite{MazzeoPacard1999}. To accommodate the infinite energy of the Fowler background, we introduce a renormalized relative energy for the finite-dimensional reduction. We adapt the localized metric perturbation constructions of Brendle~\cite{Brendle2008} and Brendle–Marques~\cite{BrendleMarques2009} to successive backgrounds with Fowler asymptotics, preserving the nondegeneracy needed to continue the iteration. For a related infinite-dimensional gluing construction in the fractional Yamabe setting with the flat metric, we refer to Ao–DelaTorre–González–Wei~\cite{Ao+22}.

We now sketch our construction. 

Let $\gamma_0=|dx|^2$ on $B_1(0)\setminus\{0\}$, with the antipodal
boundary matching on $\partial B_1(0)$, and take a fixed Fowler solution $V_0(x)=|x|^{-(n-2)/2}v_*(-\log |x|)$ as the
initial solution for this flat background, where $v_*$ is a fixed non-constant periodic even positive Fowler
profile chosen in Section~\ref{sec:inductive}  with minimum  value $m_*$ (called the neck size) and is normalized so that $v_*(0)=m_*$. The initial conformal
metric is
\[
V_0^{4/(n-2)}\gamma_0.
\]
At stage $j$, after fixing a new
insertion ball, choose $\mu_j>0$ sufficiently small. Proposition~\ref{prop:one-bubble} perturbs $\gamma_{j-1}$ in that ball to a new metric $\gamma_j$ and adds a
bubble to the current solution $V_{j-1}$; the Lyapunov--Schmidt
correction makes the resulting $V_j$ an exact positive solution for
the perturbed metric $\gamma_j$. Thus the conformal metric after stage
$j$ is
\[
V_j^{4/(n-2)}\gamma_j,
\]
and $(\gamma_j,V_j)$ is the input for the next stage.

Each $V_j$ has the same Fowler profile and neck parameter at the
puncture as the initial solution; its periodic factor may be translated
by a phase. In the Euclidean coordinates $x$, the end condition
has the form
\[
V_j(x)=|x|^{-(n-2)/2}
\bigl(v_*(-\log |x|+T_j)+O(|x|^\delta)\bigr),
\qquad |x|\downarrow0,
\]
where  the remainder has the
weighted derivative bounds specified in \eqref{eq:background}. Proposition~\ref{prop:one-bubble}
gives small metric and solution changes outside the insertion
ball and a high peak inside it.

Proposition~\ref{prop:one-bubble} also proves that the augmented
linearization in \eqref{eq:extended-linearization} is an
isomorphism. Its unknown consists of a scalar phase increment and a
weighted function correction; the scalar changes the Fowler phase
near the puncture. This is the inverse estimate needed for the next
insertion.

The Fowler end has infinite energy, so we use a finite relative energy
to measure the change produced by inserting a bubble. With the insertion scale fixed, the Lyapunov--Schmidt reduction
first solves for the phase increment and the function correction,
and then determines the bubble parameters by finding a critical
point of the reduced energy. The main difficulty is to show that neither the interaction between the bubble and the Fowler background nor the change in energy caused by adjusting the Fowler phase and the function through the projected equation destroys the nondegenerate minimum of the leading energy. The dimension assumption $n\ge 30$ provides the
necessary order separation. At the resulting critical point, the
remaining finite-dimensional error vanishes, giving an exact
solution. Sections \ref{sec:starting}--\ref{sec:projected-energy} give the construction and estimates.

We choose each insertion scale after fixing the current background.
The metric increments are summable in every smooth norm, and the
solution increments are summable outside the new ball. A local
comparison transfers the boundary lower bound into that ball.
Consequently, the limiting solution satisfies the lower bound
\eqref{eq:main-lower}, which gives completeness. The peak condition
\eqref{eq:inductive-peak}, together with the summability of later
solution increments, yields \eqref{eq:main-limsup}.

Section~\ref{sec:inductive} defines the class of backgrounds, states the
insertion proposition, and deduces Theorem~\ref{thm:main}.
Section~\ref{sec:starting} proves the initial inverse estimate and develops
the nonlinear phase correction and the relative energy on punctured balls.
Section~\ref{sec:metric-insertion} constructs the metric perturbation and
solves the projected equation after adding a bubble.
Section~\ref{sec:projected-energy} chooses the bubble parameters by finding
a nondegenerate critical point of the reduced energy.
Section~\ref{sec:insertion-proof} proves the insertion proposition,
including invertibility of the full linearization at the new solution.
Appendix~\ref{app:coefficient-tables} proves the nondegeneracy of the leading bubble energy. Subsection~\ref{app:local-cylindrical} establishes a local solution theorem on arbitrary smooth backgrounds, and Subsection~\ref{app:quadratic-curvature} gives its second-order asymptotics; these local results hold in every dimension \(n\ge3\). Subsection~\ref{app:regularity} proves the regularity and weighted-convergence results.

\noindent\textbf{Constants and uniformity}
We fix $n\ge30$ in Sections~\ref{sec:inductive}--\ref{sec:insertion-proof}.
The constant $C>0$ in the estimates may change from line to line. During one insertion,
it may depend on the fixed dimension, Fowler profile, background,
weight, insertion point, radius, and cutoffs. It is independent of
$\mu$ and uniform on the stated compact parameter set. The inverse
norm at the new solution may depend on the chosen insertion scale.

\section{Bubble insertion and the proof of the main theorem}\label{sec:inductive}

In this section, we use the insertion result and a local lower bound
to prove Theorem~\ref{thm:main}. Lemma~\ref{lem:base-inverse} supplies
the initial background, Proposition~\ref{prop:one-bubble} preserves
its properties at each insertion, and Lemma~\ref{lem:local-lower}
provides the lower bound needed for completeness.

We first describe the punctured ball and its boundary identification.
Put $X=B_1(0)\setminus\{0\}\subset\R^n$.
We use fixed Euclidean coordinates $x$, with $r=|x|$, throughout
the construction. In these coordinates, the Euclidean metric $\gamma_0$
has components $(\gamma_0)_{ij}=\delta_{ij}$. The closed ball with antipodal boundary
points identified represents $\RP^n$; its center represents $o$.
The round metric of radius $1/2$ and its Euclidean conformal factor are
\begin{equation}\label{eq:round-chart}
g_0=\omega^{4/(n-2)}\gamma_0,
\qquad \omega(x)=(1+|x|^2)^{-(n-2)/2}.
\end{equation}
Our analytic unknowns are the Euclidean representatives
\begin{equation}\label{eq:compactification}
\gamma=\omega^{-4/(n-2)}g,\qquad V=\omega u.
\end{equation}
Thus $g=\omega^{4/(n-2)}\gamma$ and $u=\omega^{-1}V$.
In particular,
$u^{4/(n-2)}g=V^{4/(n-2)}\gamma$.
The conformal covariance formula is
\begin{equation}\label{eq:conformal-covariance}
L_{\Omega^{4/(n-2)}g}(\Omega^{-1}w)
=\Omega^{-(n+2)/(n-2)}L_gw.
\end{equation}
Under the change \eqref{eq:compactification}, the Yamabe equation becomes
$-L_\gamma V=n(n-2)V^p$ on $X$, where \(p:=\frac{n+2}{n-2}\).
All metrics $\gamma$ used at a finite stage are smooth on
$\overline{B_1}$, equal to $\gamma_0$ near $0$ and on a fixed collar of
$\partial B_1$, and satisfy $\dd V_\gamma=\dd x$.

The antipodal identification is expressed by the Kelvin extension
\[
V_{\rm ext}(r,\theta)=r^{2-n}V(r^{-1},-\theta),\qquad r\ge1.
\]
At $r=1$,
\[
\partial_rV_{\rm ext}(1,\theta)
=-(n-2)V(1,-\theta)-\partial_rV(1,-\theta).
\]
Matching values and radial derivatives therefore gives
\begin{equation}\label{eq:boundary-matching}
\begin{aligned}
V(1,\theta)&=V(1,-\theta),\\
(\partial_r+\frac{n-2}{2})V(1,\theta)
&=-(\partial_r+\frac{n-2}{2})V(1,-\theta).
\end{aligned}
\end{equation}
Equivalently, spherical harmonics of even degree satisfy the Robin
condition $V_\ell'(1)+\frac{n-2}{2}V_\ell(1)=0$, and harmonics of odd degree satisfy
$V_\ell(1)=0$.
Because $\gamma=\gamma_0$ in the collar, the Kelvin extension of a weak
solution satisfying \eqref{eq:boundary-matching} solves the same
equation across $r=1$. Elliptic regularity then gives smoothness of the
extension and of the corresponding solution on $\RP^n\setminus\{o\}$.
Smooth functions on the quotient are represented by functions whose
Kelvin extensions are smooth across $r=1$.

We next fix the Fowler profile and define the weighted spaces and phase variation.
The Fowler equation is
\begin{equation}\label{eq:fowler}
v''-\frac{(n-2)^2}{4}v+n(n-2)v^p=0.
\end{equation}
Let $A_n=((n-2)/(4n))^{(n-2)/4}$, fix $m_*\in(0,A_n)$,
and let $v_*=v_{m_*}$ be the solution with $v_*(0)=m_*$ and $v_*'(0)=0$.
The negative energy orbit lies in $v>0$ and is closed; hence
$v_*$ is positive, even, nonconstant, and periodic. Its energy is
\begin{equation}\label{eq:fixed-neck}
\frac12(v_*')^2-\frac{(n-2)^2}{8}v_*^2
+\frac{(n-2)^2}{2}v_*^{p+1}=-E_*<0,
\qquad \min v_*=m_*>0.
\end{equation}
We fix the neck parameter $m_*$. The phase $T$, taken modulo the
period of $v_*$, determines the radial profile
\[
V_T(x):=\Theta(x)v_*(-\log r+T),
\qquad \Theta(x)=|x|^{-(n-2)/2}.
\]
We also use the radial notation $V_T(r)$ and $\Theta (r)$. The initial solution for
$\gamma_0$ is $V_0(r)=\Theta (r) v_*(-\log r)$.
It satisfies \eqref{eq:boundary-matching}, since $v_*'(0)=0$.
Near $x=0$, $V_0(x)^{4/(n-2)}\gamma_0\ge m_*^{4/(n-2)}|x|^{-2}\gamma_0$;
after the boundary identification its conformal metric is complete.

Fix a scalar weight $0<\delta<1$ throughout the construction.
For $s\in\{-\delta,\delta\}$,
let $H^1_s(X)$ be the completion, in the following norm, of smooth
functions on $\overline{B_1}$ vanishing near $x=0$ whose boundary trace is
antipodally even:
\begin{equation}\label{eq:weighted-spaces}
\|w\|_{H^1_s}^2
:=\int_{B_1}|x|^{-2s}
       \bigl(|Dw|^2+|x|^{-2}|w|^2\bigr)\dd x,
\qquad H^{-1}_\delta:=(H^1_{-\delta})^*.
\end{equation}
At weight zero, the space is
\begin{equation}\label{eq:unweighted-even-space}
\begin{aligned}
H^1_{\mathrm{ev}}(B_1)
&:=\{w\in H^1(B_1):\operatorname{Tr}w(\theta)
                         =\operatorname{Tr}w(-\theta)\},\\
\|w\|_{H^1_{\mathrm{ev}}}^2
&:=\int_{B_1}(|Dw|^2+|x|^{-2}|w|^2)\dd x.
\end{aligned}
\end{equation}
The subscript $\mathrm{ev}$ refers only to the boundary trace.
Hardy's inequality with a boundary term and the trace inequality give
\[
\|w\|_{H^1(B_1)}^2
\le\|w\|_{H^1_{\mathrm{ev}}}^2
\le C\|w\|_{H^1(B_1)}^2.
\]
Thus the completion at weight zero is $H^1_{\mathrm{ev}}(B_1)$,
with an equivalent norm to the ordinary Sobolev norm on this subspace.
For $n\ge3$, the point $0$ has zero $H^1$-capacity. Consequently
restriction identifies ordinary $H^1(B_1)$ with ordinary Euclidean
$H^1(X)$, with equal ordinary norms. We use $H^1_0(D)$ for the space with zero boundary
trace on a domain $D$. Duality in \eqref{eq:weighted-spaces} uses $\dd x$.
For $1\le q\le\infty$ and $s\in\{-\delta,\delta\}$, set
$\|f\|_{L^q_s}:=\||x|^{-s}f\|_{L^q(B_1)}$.

Set $\mathcal A=B_2\setminus\overline{B_{1/2}}$. The weighted H\"older
norm, for $s\in\{-\delta,0,\delta\}$, is
\begin{equation}\label{eq:weighted-holder}
\|w\|_{\cC^{k,\alpha}_{\Theta;s}}
:=\sup_{0<\rho\le1/2}\rho^{\frac{n-2}{2}-s}
        \|w(\rho\,\cdot)\|_{C^{k,\alpha}(\overline{\mathcal A})}
 +\|w\|_{C^{k,\alpha}(\overline{B_1}\setminus B_{1/4})}.
\end{equation}
Restrictions near $0$ use only annuli contained in the indicated region.
For estimates on arbitrary subsets $D\subset X$, we use
\[
\|w\|_{\cC^k_{\Theta;s}(D)}
:=\sum_{|\beta|\le k}\sup_{x\in D}
 |x|^{|\beta|-s}\Theta(x)^{-1}|D^\beta w(x)|.
\]
Omitting the H\"older seminorms in \eqref{eq:weighted-holder} gives
an equivalent norm to $\cC^k_{\Theta;s}(X)$.
In particular, $\|w\|_{\cC^0_{\Theta;0}}=\sup_X|w|/\Theta$. Every
$w\in\cC^{k,\alpha}_{\Theta;\delta}$ satisfies
\[
|D^\beta w(x)|\le C\|w\|_{\cC^{k,\alpha}_{\Theta;\delta}}
                      |x|^{-\frac{n-2}{2}+\delta-|\beta|},\qquad |\beta|\le k.
\]
Thus $w/\Theta=O(|x|^\delta)$; the norm controls the function and its
derivatives relative to the Fowler scale $\Theta$.

The weak realization of $-L_\gamma$ uses the symmetric form
\begin{equation}\label{eq:weak-robin-form}
Q_\gamma(w,\varphi):=
\int_{B_1}\bigl(\ip{Dw}{D\varphi}_\gamma
                 +c(n)\Scal_\gamma w\varphi\bigr)\dd x
+\frac{n-2}{2}\int_{\partial B_1}w\varphi\dd S.
\end{equation}
The trace of each test function $\varphi$ is antipodally even. Hence
\[
\begin{aligned}
\int_{\partial B_1}(\partial_r+\frac{n-2}{2})w(\theta)\varphi(\theta)\dd S
=\frac12\int_{\partial B_1}
\left[(\partial_r+\frac{n-2}{2})w(\theta)+(\partial_r+\frac{n-2}{2})w(-\theta)\right]
\varphi(\theta)\dd S.
\end{aligned}
\]
Thus \eqref{eq:boundary-matching} is the natural boundary condition
of this weak form. The form is continuous, with
\begin{equation*}
|Q_\gamma(w,\varphi)|\le C_\gamma
       \|w\|_{H^1_\delta}\|\varphi\|_{H^1_{-\delta}}.
\end{equation*}
Indeed, uniform ellipticity controls the gradient term by
$C\||x|^{-\delta}Dw\|_2\||x|^\delta D\varphi\|_2$.
Bounded scalar curvature and $|x|\le 1$ control the term of order zero by
$C\||x|^{-1-\delta}w\|_2\||x|^{-1+\delta}\varphi\|_2$.
The trace inequality controls the boundary term on the fixed collar,
where the weights and their reciprocals are bounded.
Smooth functions satisfying both matching conditions and vanishing
near $0$ are dense in $H^1_\delta$: use the Kelvin identification in
a boundary collar, smooth there, and cut off near $0$ in the weighted norm.

For a real valued function $f$, write
$f_+:=\max\{f,0\}$ and $f_-:=\max\{-f,0\}$. We write the nonlinear residual as
\[
\Fop_\gamma(Y):=-L_\gamma Y-n(n-2)(Y_+)^p,
\]
For smooth test functions on $\overline{B_1}$ that vanish near $0$
and have antipodally even boundary traces, the weak formulation
uses $Q_\gamma$ for the linear term.
To vary the Fowler phase near the puncture, choose a smooth radial
cutoff $0\le\chi(r)\le1$ equal to one near
$0$ and zero near $r=1$. For the initial inverse, any such cutoff
is allowed. When a ball is chosen for a particular insertion, we will
choose the cutoff to vanish on a neighborhood of that ball as well.
The cutoff is held fixed throughout each insertion. Define
\begin{equation}\label{eq:phase-family}
\begin{aligned}
\Phi_{T,\tau}(r)&:=\chi(r)\bigl(V_{T+\tau}(r)-V_T(r)\bigr),\\
\psi_T(r)&:=\left.\partial_\tau\Phi_{T,\tau}(r)\right|_{\tau=0}
       =\chi(r)\Theta (r) v_*'(-\log r+T).
\end{aligned}
\end{equation}
Here $r=|x|$ is the radial variable, $T$ is the background phase,
and $\tau$ is its increment. The cutoff preserves the boundary matching
conditions, and $\partial_\tau\Phi_{T,\tau}=\psi_{T+\tau}\in H^1_{-\delta}\setminus H^1_\delta$.
Set
\[
\cX_\delta=\R\oplus H^1_\delta,
\qquad\|(\tau,w)\|_{\cX_\delta}=|\tau|+\|w\|_{H^1_\delta}.
\]
The spaces $H^1_{\pm\delta}$, $H^{-1}_\delta$, and $\cX_\delta$,
with their displayed norms, are fixed independently of $\chi$.
Changing $\chi$ changes the representation of a function as
$\tau\psi_T+w$. The exact coordinate change and the dependence of
the inverse bound are given in \eqref{eq:cutoff-linear-change}--
\eqref{eq:cutoff-inverse-bound} below.
Near $0$, the identity $V_T+\Phi_{T,\tau}=V_{T+\tau}$ preserves
the exact Fowler profile. The variable $\tau$ describes the phase
variation, while $w\in H^1_\delta$ describes the remaining correction.

We now specify the backgrounds used in the induction.
Let $\gamma$ satisfy the metric conditions stated after
\eqref{eq:conformal-covariance}, and let $V>0$ be smooth on
$\overline{B_1}\setminus\{0\}$, satisfy \eqref{eq:boundary-matching},
and obey
\begin{equation}\label{eq:background}
-L_\gamma V=n(n-2)V^p\quad\text{in }X,\qquad
V=V_T+w_0\quad\text{near }0,\qquad
w_0\in H^1_\delta\cap\cC^{k,\alpha}_{\Theta;\delta}
\end{equation}
for every integer $k\ge0$ and every $0<\alpha<1$.
The membership \(w_0\in H^1_\delta\cap\mathcal C^{k,\alpha}_{\Theta;\delta}\) is understood after extending \(w_0\) smoothly to \(X\), keeping it unchanged near \(0\) and making it vanish near \(\partial B_1\), for example by using a suitable smooth cutoff.

Set $\cL_{\gamma,V}:=-L_\gamma-n(n+2)V^{p-1}$ and define
\begin{equation}\label{eq:extended-linearization}
\widehat\cL_{\gamma,V}:\cX_\delta\to H^{-1}_\delta,\qquad
\widehat\cL_{\gamma,V}(\tau,w)
:=\cL_{\gamma,V}(\tau\psi_T+w),
\end{equation}
where $\psi_T$ is from \eqref{eq:phase-family}.
For $(\tau,w)\in\cX_\delta$ and $\varphi\in H^1_{-\delta}$,
the weak formulation is
\[
\begin{aligned}
\langle\widehat{\mathcal L}_{\gamma,V}(\tau,w),\varphi\rangle
={}&Q_{\gamma}(w,\varphi)
-n(n+2)\int_XV^{p-1}w\varphi\,dx
+\tau\int_X
\bigl(\mathcal L_{\gamma,V}\psi_T\bigr)\varphi\,dx.
\end{aligned}
\]

Since \(\psi_T=\chi\partial_TV_T\), \(\chi\equiv1\) and \(\gamma\) is Euclidean near \(0\), the Fowler equation gives
\[
\cL_{\gamma,V}\psi_T
=n(n+2)(V_T^{p-1}-V^{p-1})\partial_TV_T\quad \text{near \(0\)}.
\]
Hence $|\cL_{\gamma,V}\psi_T|\le C|x|^{-2}|w_0|$ there.
Together with $w_0\in H^1_\delta$ and smoothness away from $0$,
this implies $\cL_{\gamma,V}\psi_T\in H^{-1}_\delta$.

We call a pair \((\gamma,V)\) admissible if \(\gamma\) satisfies the metric conditions stated after \eqref{eq:conformal-covariance}, \(V>0\) is smooth on \(\overline{B_1}\setminus\{0\}\) and satisfies \eqref{eq:boundary-matching} and \eqref{eq:background}, and the map \(\widehat{\mathcal L}_{\gamma,V}\) defined in \eqref{eq:extended-linearization} is an isomorphism. The last requirement is the nondegeneracy condition for the pair.

The following inverse provides the initial background for the construction.
\begin{lemma}\label{lem:base-inverse}
At $V_0(r)=\Theta (r) v_*(-\log r)$, the map $\widehat\cL_{\gamma_0,V_0}: \cX_\delta\to H^{-1}_\delta$ 
is an isomorphism. Its inverse bound depends on $n,v_*,\delta$
and the cutoff.
\end{lemma}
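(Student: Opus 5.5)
The plan is to pass to cylindrical coordinates and decompose into spherical harmonics, which reduces the statement to a family of ODE problems on a half-line. Put $t=-\log r\in(0,\infty)$ and write $w=\Theta(x)\phi(t,\theta)$. Since $\gamma_0$ is flat, the conformal covariance of the Laplacian gives
\[
\cL_{\gamma_0,V_0}w=\Theta\, e^{2t}\Bigl(-\partial_t^2\phi-\Delta_{\mathbb S^{n-1}}\phi+\tfrac{(n-2)^2}{4}\phi-n(n+2)v_*^{p-1}(t)\phi\Bigr)=:\Theta\,e^{2t}\,\mathbb L\phi .
\]
Under this change the weighted spaces become exponentially weighted Sobolev spaces on the half-cylinder $\mathcal C_+=(0,\infty)\times\mathbb S^{n-1}$. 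The space $H^1_\delta$ becomes $e^{-\delta t}H^1$ (decay), $H^1_{-\delta}$ becomes $e^{\delta t}H^1$ (growth), and $H^{-1}_\delta$ is the dual of the latter.

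Next I would translate the boundary condition. Since $(\partial_r+\frac{n-2}{2})(\Theta\phi)=-r^{-1}\Theta\,\partial_t\phi$, the matching condition \eqref{eq:boundary-matching} becomes, at $t=0$:
\begin{itemize}
\item a Neumann condition $\partial_t\phi_\ell(0)=0$ for even $\ell$;
\item a Dirichlet condition $\phi_\ell(0)=0$ for odd $\ell$.
\end{itemize}
Correspondingly, the form $Q_{\gamma_0}$ becomes the Dirichlet form of $\mathbb L$ with no boundary term. Projecting onto the eigenspaces of $\Delta_{\mathbb S^{n-1}}$, with eigenvalues $\lambda_\ell=\ell(\ell+n-2)$, reduces the problem to Hill-type operators
\[
\mathbb L_\ell=-\partial_t^2+\tfrac{(n-2)^2}{4}+\lambda_\ell-n(n+2)v_*^{p-1}
\]
on $(0,\infty)$ with the above boundary conditions.

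I would treat the modes in three groups.

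\textbf{Modes $\ell\ge2$.} Here $\mathbb L_\ell$ is uniformly coercive. The standard Fowler bound $n(n+2)v_*^{p-1}<\frac{(n-2)^2}{4}+2n$ (as in Mazzeo--Pacard) gives this. Coercivity persists in $e^{\mp\delta t}$-weighted spaces for $\delta$ small; it is uniform in $\ell$ because the spectral gap only grows with $\ell$. The Lax--Milgram theorem then inverts these modes with a uniform bound.

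\textbf{Modes $\ell=1$ (Dirichlet at $0$).} The Jacobi fields come from translations and have exponential rates $e^{\pm t}$. Since $\delta<1$, the weight is not an indicial root. The decaying solution cannot satisfy $\phi(0)=0$: if it did, it would give a nonzero $L^2$ kernel element of the coercive-in-the-limit problem, which an energy/Sturm argument excludes. Hence $\mathbb L_1$ is an isomorphism between the weighted spaces, with inverse given by variation of parameters.

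\textbf{Mode $\ell=0$ (Neumann at $0$).} This is the delicate mode. The homogeneous solutions are:
\begin{itemize}
\item $\dot v=v_*'$, which is periodic and satisfies $\dot v(0)=0$ and $\dot v'(0)=v_*''(0)\neq0$;
\item $z=\partial_m v_m|_{m=m_*}$, which grows linearly in $t$ and satisfies $z(0)=1$, $z'(0)=0$.
\end{itemize}
Both indicial roots are $0$ (double), so $\pm\delta$ are non-indicial. Consequently $\mathbb L_0$ on the decaying space is Fredholm. It is injective, since no nonzero homogeneous solution decays. Its cokernel, computed on the growing space $e^{\delta t}H^1$ with the Neumann condition, is spanned by $z$. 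Explicitly, I would write the inverse by variation of parameters with the Wronskian $W(\dot v,z)=\mathrm{const}\neq0$, and check that solvability in $e^{-\delta t}H^1$ holds precisely when $\int_0^\infty f z\,dt=0$.

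The extra unknown $\tau$ fills this one-dimensional cokernel. One has $\mathbb L_0(\chi\dot v)=-2\chi'\dot v'-\chi''\dot v$, which is compactly supported. Integrating by parts against $z$ gives
\[
\int_0^\infty \mathbb L_0(\chi\dot v)\,z\,dt=\pm W(\dot v,z)\neq0,
\]
where the boundary terms at $t=0$ vanish because $\chi=0$ there and $z'(0)=0$, and only the region $\chi\equiv1$ contributes at infinity. So given $f\in H^{-1}_\delta$, I choose $\tau$ to make $f-\tau\cL\psi_0$ orthogonal to $z$, and then solve for $w\in H^1_\delta$. Uniqueness follows: if $\tau\psi_0+w$ lies in the kernel, pairing with $z$ forces $\tau=0$, and then $w=0$ by injectivity.

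Finally, the mode estimates have to be summed. By the uniform coercivity for $\ell\ge2$, this gives the bound in $\cX_\delta\to H^{-1}_\delta$, depending only on $n$, $v_*$, $\delta$ and $\chi$ (the last through $\tau$ and the Wronskian pairing).

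I expect the main obstacle to be the weighted $\ell=0$ analysis. It requires verifying that the cokernel is exactly spanned by $z$ and not also by $\dot v$ (which fails Neumann), and controlling the weak $H^{-1}_\delta$ data within the ODE variation-of-parameters formula; a secondary point is uniform coercivity of $\mathbb L_\ell$ for $\ell\ge2$ in the exponentially weighted setting.
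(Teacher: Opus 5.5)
Your proposal is correct and follows the paper's proof essentially step by step. The shared steps are the cylindrical normalization, Neumann/Dirichlet endpoint conditions from antipodal parity, Lax--Milgram after weight conjugation for $\ell\ge2$, and variation of parameters for $\ell=0,1$, with $\tau$ filling the one-dimensional cokernel spanned by $z=\partial_m v_m$ through the Wronskian pairing; you should tighten two points as the paper does: the bound $v_*<2^{-(n-2)/2}$ gives $c_\ell\ge\frac{n+2}{2}>1>\delta^2$, so weighted coercivity holds for every $\delta\in(0,1)$ and not only for small $\delta$, and for $\ell=1$ the decaying solution misses the Dirichlet condition simply because it is a multiple of $y_-=e^{-t}(\frac{n-2}{2}v_*-v_*')$, which is positive by the negative-energy identity $|v_*'|<\frac{n-2}{2}v_*$ (your ``energy/Sturm'' justification needs exactly this positive solution).
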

The proof is given in Section~\ref{sec:starting}.

By \eqref{eq:background} and the smoothness and positivity of $V$
away from $0$, the quotient $V/\Theta$ has positive upper and lower
bounds on $X$, and
$\|V\|_{\cC^k_{\Theta;0}}<\infty$ for every fixed $k\ge0$.
By Lemma~\ref{lem:base-inverse}, the initial pair
$(\gamma_0,V_0)$ is admissible.
Proposition~\ref{prop:one-bubble} preserves admissibility
at each insertion.

\noindent\textbf{Change of phase cutoff.}
Let $(\gamma,V)$ be admissible, with phase $T$ in
\eqref{eq:background}.  For two smooth radial cutoffs $\chi$ and
$\widetilde\chi$, each equal to one near $0$ and zero near $r=1$, write
\[
\psi_T^\chi=\chi\partial_TV_T,\qquad
\Phi_{T,\tau}^\chi(r)=\chi(r)\bigl(V_{T+\tau}(r)-V_T(r)\bigr),\qquad
\widehat\cL^\chi_{\gamma,V}(\tau,w)
=\cL_{\gamma,V}(\tau\psi_T^\chi+w).
\]
Set $h_T=(\chi-\widetilde\chi)\partial_TV_T\in H^1_\delta(X)$.  Then
\begin{equation}\label{eq:cutoff-linear-change}
S_{\chi,\widetilde\chi}:\cX_\delta\to\cX_\delta,
\qquad
S_{\chi,\widetilde\chi}(\tau,w)=(\tau,w+\tau h_T)
\end{equation}
is a bounded linear isomorphism.  Since
\[
\widehat\cL^{\widetilde\chi}_{\gamma,V}
S_{\chi,\widetilde\chi}=\widehat\cL^\chi_{\gamma,V},
\qquad
\|S_{\chi,\widetilde\chi}\|
\le 1+\|h_T\|_{H^1_\delta},
\]
invertibility for one cutoff is equivalent to invertibility for the other,
and
\begin{equation}\label{eq:cutoff-inverse-bound}
\| (\widehat\cL^{\widetilde\chi}_{\gamma,V})^{-1}\|_{H^{-1}_\delta\to\cX_\delta}
\le (1+\|h_T\|_{H^1_\delta})
             \|(\widehat\cL^\chi_{\gamma,V})^{-1}\|_{H^{-1}_\delta\to\cX_\delta}.
\end{equation}
At the nonlinear level, the change
\begin{equation}\label{eq:cutoff-nonlinear-change}
\widetilde w=w+(\chi-\widetilde\chi)(V_{T+\tau}-V_T),
\qquad
V+\Phi_{T,\tau}^\chi+w
=V+\Phi_{T,\tau}^{\widetilde\chi}+\widetilde w
\end{equation}
preserves the represented function. The comparison constants depend on
the two cutoffs. We suppress the cutoff superscript after this comparison.

Choose real phase representatives near $T$. All thresholds and constants refer to the
fixed background and insertion ball.

\begin{proposition}\label{prop:one-bubble}
Let $(\gamma,V)$ be admissible, with phase $T$ in
\eqref{eq:background}, and let $B_{4\rho}(a)$ have compact closure
in $B_{1/2}\setminus\{0\}$, with $\gamma=\gamma_0$ on this ball.
For every sufficiently small $\mu>0$, there is an admissible pair
$(\gamma_\mu,Y_\mu)$ with Fowler phase
$\widehat T_\mu$ and the same neck parameter $m_*$.
The metric change from $\gamma$ to $\gamma_{\mu}$ is supported in $B_\rho(a)$,
$\dd V_{\gamma_\mu}=\dd x$, and, for every fixed $k\ge0$,
\begin{equation}\label{eq:insertion-data}
\|\gamma_\mu-\gamma\|_{C^k(\overline{B_1})}\to0,\qquad
\widehat T_\mu\to T
\quad\text{as }\mu\downarrow0.
\end{equation}
The full inverse satisfies
\begin{equation}\label{eq:insertion-inverse}
\|\widehat\cL_{\gamma_\mu,Y_\mu}^{-1}f\|_{\cX_\delta}
\le C_\mu\|f\|_{H^{-1}_\delta},\qquad C_\mu<\infty,
\end{equation}
for every \(f\in H^{-1}_\delta\). For every fixed $k\ge0$,
\begin{equation}\label{eq:outside-convergence}
\|Y_\mu-V\|_{\cC^k_{\Theta;0}(X\setminus B_\rho(a))}\to0
\quad\text{as }\mu\downarrow0.
\end{equation}
There are $c,C>0$, independent of small $\mu$, and $x_\mu\in B_{C\mu}(a)$
such that
\begin{equation}\label{eq:peak}
Y_\mu(x_\mu)\ge c\mu^{-(n-2)/2}.
\end{equation}
\end{proposition}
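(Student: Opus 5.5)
The construction is a Lyapunov--Schmidt gluing in the spirit of Brendle and Brendle--Marques, carried out relative to the admissible background $(\gamma,V)$. Fix $a$, $\rho$ and a phase cutoff $\chi$ vanishing on a neighborhood of $B_{4\rho}(a)$; by \eqref{eq:cutoff-inverse-bound} this costs only a constant. For parameters $(\xi,\epsilon)$ in a compact set, with center $a+\mu\xi$ and scale $\mu\epsilon$, let $U_{\mu,\xi,\epsilon}$ be the standard bubble. Multiply it by a cutoff supported in $B_{2\rho}(a)$. Define $\gamma_\mu=\exp(h_\mu)$ with $h_\mu$ trace-free and supported in $B_\rho(a)$, so that $\dd V_{\gamma_\mu}=\dd x$. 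Take $h_\mu(x)=\mu^{N}H((x-a)/\mu)$-type Weyl-tensor perturbations as in Brendle's construction: a polynomial $H$ of degree $d$ with a cutoff at scale $\rho$, where $N$ is chosen so that $\|h_\mu\|_{C^k}\to0$ for every $k$. This gives \eqref{eq:insertion-data} for the metric. The approximate solution is $W=V+U_{\mu,\xi,\epsilon}$ near $a$.

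The projected problem is solved in the unknowns $(\tau,w)\in\cX_\delta$ with ansatz
\[
Y=V+U+\Phi_{T,\tau}+w,
\]
requiring
\[
\Fop_{\gamma_\mu}(Y)\in\operatorname{span}\{\text{cut-off kernel elements }\partial_{\xi_i}U,\ \partial_\epsilon U\}.
\]
The linear operator used is $\widehat\cL_{\gamma,V}$ away from the ball, patched with the bubble linearization near $a$. Invertibility on the complement of the approximate kernel, uniformly in $\mu$ in weighted norms combining $H^1_\delta$ with a bubble-scale norm, follows from admissibility and the standard nondegeneracy of the bubble by a blow-up/contradiction argument. This step uses that the interaction $V^{p-1}$ and $U$ live at separated scales. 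The error $\Fop_{\gamma_\mu}(W)$ is bounded by the metric perturbation term $\mu^{N}|H|$-type contributions together with the interaction, which is $O(\mu^{(n-2)/2})$ in $H^{-1}$. A contraction gives $(\tau_\mu,w_\mu)$, small and $C^1$ in $(\xi,\epsilon)$. The phase $\widehat T_\mu=T+\tau_\mu\to T$, and the Fowler end with neck $m_*$ is preserved exactly near $0$, since $V_T+\Phi=V_{T+\tau}$ there.

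The main obstacle is the reduced problem. A critical point of the renormalized relative energy $\mathcal E(\xi,\epsilon)$ kills the Lagrange multipliers. The expansion has the form
\[
\mathcal E=\text{const}-\mu^{2d+4-?}F(\xi,\epsilon)+o(\cdot).
\]
Here $F$ is Brendle's quadratic-in-$H$ functional, which has a strict nondegenerate local minimum (the appendix coefficient tables). To make this work I would verify term by term that the bubble--Fowler interaction, of order $\mu^{n-2}$ times $V(a)$ and derivatives, and the energy change from $(\tau,w)$, which is quadratic in the error, are of strictly smaller order. This is exactly where $n\ge30$ is used: one needs $2d+4<n-2$ with $d$ large enough for $F$ to be nondegenerate. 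I would first try to track the exponents via the linear-in-$h$ cancellation, $\int W\,U\,\partial\partial h=0$ for trace-free $h$ with $x_i h_{ij}=0$, which leaves a quadratic leading term. By stability of nondegenerate critical points under $C^1$-small perturbation we get $(\xi_\mu,\epsilon_\mu)$, hence an exact solution. Positivity follows since $V/\Theta$ is bounded below and the corrections are small relative to $V+U$.

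For the remaining conclusions: \eqref{eq:peak} holds at $x_\mu=a+\mu\xi_\mu$, because $U$ has height $\sim(\mu\epsilon)^{-(n-2)/2}$ and $w$ is small in the bubble norm. For \eqref{eq:outside-convergence}, outside $B_\rho(a)$ the difference $Y_\mu-V$ equals the bubble tail $O(\mu^{(n-2)/2})$ plus $\Phi+w$, both tending to $0$ in $H^1_\delta$. Elliptic regularity in weighted Hölder spaces upgrades this to $\cC^k_{\Theta;0}$. Finally, admissibility and \eqref{eq:insertion-inverse} for $(\gamma_\mu,Y_\mu)$ require showing that $\widehat\cL_{\gamma_\mu,Y_\mu}$ is injective; Fredholmness of index zero is inherited by perturbation from $\widehat\cL_{\gamma,V}$ plus a compact term. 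Injectivity comes from combining the invertibility on the complement of the approximate kernel with nondegeneracy of the Hessian of $F$ on the kernel directions, giving a constant $C_\mu$ that may blow up as $\mu\to0$.
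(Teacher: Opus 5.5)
There is a genuine gap at the first step: you insert the bubble on $V$ itself, but $V$ does not solve the equation for $\gamma_\mu$. Otherwise your outline matches the paper's architecture: a degree-six Brendle--Marques tensor with prefactor $\mu^N$, $N>6$; a phase-augmented projected inverse proved by blow-up; a renormalized relative energy; the nondegenerate minimum of $F$; and a block argument for the new inverse.

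The size of the error from this step is the problem. In the $x$ coordinates the top-degree part of $h_\mu=\mu^N\cH((x-a)/\mu)$ has coefficient $\mu^{N-d}$ (it is $\mu^{3/4}$ in the paper). Up to terms quadratic in $h_\mu$, $\Fop_{\gamma_\mu}(V)=-(L_{\gamma_\mu}-L_\gamma)V=(h_\mu)_{ik}\partial_{ik}V$ on $B_\rho(a)$, and this is generically of size $\mu^{N-d}$ in $H^{-1}_\delta$. For instance, if $V$ is radial about $0$, tracelessness and $x_iH_{ik}(x)=0$ reduce $H_{ik}(x-a)\partial_{ik}V$ to $r^{-2}(V''-V'/r)\,W_{irks}a_ia_k(x-a)_r(x-a)_s$, which is not identically zero.

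Your error budget (``$\mu^N|H|$-type contributions plus the interaction'') omits this term. Once it is included, your claim that the energy change from $(\tau,w)$ is quadratic in the error and hence of lower order only gives $O(\mu^{2(N-d)})$, which is far larger than $\mu^{2N}$. The reduced energy therefore cannot detect $F$. You could only rescue this by proving that the large contribution is independent of the bubble parameters to order $o(\mu^{2N})$ in $C^2$, which in effect means splitting off the background's response first.

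That splitting is the missing step, Lemma~\ref{lem:background-continuation} in the paper. Use admissibility and the implicit function theorem to continue $V$ to an exact solution $V_\mu=V+\Phi_{T,T_\mu-T}+w_\mu$ for $\gamma_\mu$, with $\widehat\cL_{\gamma_\mu,V_\mu}$ uniformly invertible. Then add the bubble to $V_\mu$ and measure energy with $\cJ_{\gamma_\mu,V_\mu}$. This removes the background's response exactly, so the residual contains only the bubble-scale metric source and the interaction.

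Even after this repair, several of your order counts are off.

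First, the cancellation \eqref{eq:linear-energy-zero} removes only the linear term of the energy. The linear metric source $\mu^{27/4}\eta(|y|/R)S_{\lambda,b}$ in the residual does not vanish, and through the correction it contributes energy of order $\mu^{27/2}|b|^4$. That is the same order as $\mu^{27/2}F/2$ unless $b$ is small. The paper handles it with the explicit formula \eqref{eq:center-source}, i.e.\ $S_{\lambda,b}=O(|b|^2)$. This gives $\|\phi_\mu\|_{H^1_\delta}=O(\mu^{27/4}|b|^2+e_\mu)$ and a Hessian error $C\mu^{27/2}|b|^2+o(\mu^{27/2})$ (Lemma~\ref{lem:correction-hessian}), which is absorbed by shrinking the $b$-neighborhood.

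Second, this control is needed in $C^2$, not only in $C^1$ as your ``stability under $C^1$-small perturbation'' suggests. Nondegeneracy of the reduced critical point is what makes $D_{\lambda,b}c_\mu$ invertible in the block-triangular argument, and hence what makes the new pair admissible.

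Third, because $V(a)>0$, the bubble--background interaction is $\int V_\mu^p\widehat U_{\mu,\lambda,b}\,\dd x\sim\mu^{(n-2)/2}$, not $\mu^{n-2}$. The paper's dimension restriction comes from requiring $\mu^{(n-2)/2}=o(\mu^{27/2})$ with $N=27/4>6$, together with $e_\mu=o(\mu^{27/4})$ and the certificates for $F$, which are proved for $n\ge30$. It does not come from a condition like $2d+4<n-2$.

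Finally, $w$ is controlled only in $H^1_\delta$, not pointwise. Positivity should therefore come from testing with $Y_-$ and the coercivity of Lemma~\ref{lem:ground-state}. The peak should come from $L^{p+1}$ convergence of the rescaled solution, taking $x_\mu$ to be a maximum point rather than the bubble center.
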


To prove Proposition~\ref{prop:one-bubble}, we first solve a projected equation for the phase and function correction, keeping the bubble’s scale and center fixed. At this intermediate stage, \(n+1\) residual coefficients may still be nonzero. We then choose the scale and center at a nondegenerate critical point of the reduced energy, which makes these coefficients vanish and yields the exact solution \(Y_\mu\). Nondegeneracy of this critical point, together with the projected inverse, also gives invertibility of the augmented linearization at \((\gamma_\mu,Y_\mu)\). Section~\ref{sec:insertion-proof} completes the proof.

\begin{remark}\label{rem:not-supported}
The metric change is supported in $B_\rho(a)$, and the solution
change outside $B_\rho(a)$ satisfies \eqref{eq:outside-convergence}.
Lemma~\ref{lem:local-lower} supplies the lower bound inside
$B_{2\rho}(a)$ used in the induction.
\end{remark}

We now transfer boundary lower bounds into the insertion ball and carry out the induction.
For balls $D$ whose radii are at most one sixteenth of the distance of
their centers to $0$, the function $\Theta=|x|^{-(n-2)/2}$ satisfies
\begin{equation}\label{eq:theta-ratio}
\sup_D\Theta\le C_n\inf_D\Theta
\end{equation}
with a constant $C_n$ depending only on $n$.

\begin{lemma}\label{lem:local-lower}
There exist a $C^2(\overline{B_1})$ neighborhood $\mathcal U_0$ of
$\gamma_0$ and $r_0>0$ such that, if $\gamma\in\mathcal U_0$,
$D=B_R(a)$ has compact closure in $B_1$, $R\le r_0$, and $V>0$ solves
$-L_\gamma V=n(n-2)V^p$ near $\overline D$, then
\begin{equation}\label{eq:local-minimum}
\inf_D V\ge\frac12\inf_{\partial D}V.
\end{equation}
If, in addition, \(R\le |a|/16\)  and $V/\Theta\ge m_*/2$ on
$\partial D$, then $V/\Theta\ge m_*/(4C_n)$ on $D$.
\end{lemma}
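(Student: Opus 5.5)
The plan is to prove \eqref{eq:local-minimum} by a superharmonicity and comparison argument with a small correction for the zeroth-order term. Since $V>0$ and $-L_\gamma V=n(n-2)V^p\ge0$, we have
\[
-\Delta_\gamma V+c(n)\Scal_\gamma V\ge0 \quad\text{in } D .
\]
For $\gamma\in\mathcal U_0$, a small $C^2$ neighborhood of $\gamma_0$, the scalar curvature satisfies $|\Scal_\gamma|\le \sigma$, with $\sigma$ small, and the coefficients of $\Delta_\gamma$ are uniformly elliptic and $C^1$-close to those of the Euclidean Laplacian. The first step is a barrier
\[
h(x)=1+\lambda\bigl(R^2-|x-a|^2\bigr), \qquad\text{with }\lambda=1 .
\]
Then $-\Delta_\gamma h\ge 2n\lambda-C\sigma\lambda R\ge n\lambda$ for a suitable neighborhood. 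Moreover $c(n)|\Scal_\gamma| h\le c(n)\sigma(1+\lambda R^2)$, which is $\le n\lambda$ once $\sigma$ is small. Hence $-L_\gamma h\ge0$ in $D$, $h=1$ on $\partial D$, and $1\le h\le 1+R^2\le 2$ if $r_0\le1$. In particular $h>0$ on $\overline D$.

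Next I will use the standard quotient trick to get a maximum principle without a sign condition on the zeroth-order term. Put $q=V/h$. A direct computation gives
\[
\Delta_\gamma q+2h^{-1}\ip{Dh}{Dq}_\gamma = h^{-1}\bigl(\Delta_\gamma V-q\Delta_\gamma h\bigr).
\]
Combining the two inequalities, $-L_\gamma V\ge0$ and $-L_\gamma h\ge0$ with $h>0$, the zeroth-order term regroups as
\[
-\Delta_\gamma q-2h^{-1}\ip{Dh}{Dq}_\gamma + h^{-1}q\,(-L_\gamma h)\ \ge 0 ,
\]
where the coefficient $h^{-1}(-L_\gamma h)$ is $\ge0$. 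Since $q>0$, the weak minimum principle applies and gives $\inf_D q\ge\min\{0,\inf_{\partial D} q\}=\inf_{\partial D}V$. Therefore
\[
V\ge h\inf_{\partial D}V\ge\inf_{\partial D}V ,
\]
which is even stronger than the factor $\tfrac12$. The only point needing care is making the constants uniform. The constant $C$ in $|\Delta_\gamma h+2n\lambda|\le C\sigma$ depends on $\|\gamma-\gamma_0\|_{C^1}$ and the Christoffel symbols. I would instead simply choose $\mathcal U_0$ so that $|\Delta_\gamma|x-a|^2-2n|\le n/2$ and $c(n)|\Scal_\gamma|\le n/4$, and then take $r_0\le1$. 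This is the main (mild) obstacle; everything else is routine.

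For the second assertion, fix $x\in D$. Since $R\le|a|/16$, \eqref{eq:theta-ratio} applies to $D$, so $\Theta(x)\le C_n\inf_{\partial D}\Theta$ (using that $\partial D\subset\overline D$, and the ratio estimate is applied on the closed ball). Then
\[
V(x)\ge\tfrac12\inf_{\partial D}V\ge\tfrac12\cdot\tfrac{m_*}{2}\inf_{\partial D}\Theta\ge\frac{m_*}{4C_n}\Theta(x),
\]
which is the claimed bound.
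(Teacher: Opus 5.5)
\textbf{There is a genuine gap in the minimum-principle step for $q=V/h$.} The operator you obtain is
$Mq=-\Delta_\gamma q-2h^{-1}\langle Dh,Dq\rangle_\gamma+cq$, with $c=h^{-1}(-L_\gamma h)\ge0$. Its zeroth-order coefficient is nonnegative. For such operators the weak minimum principle only gives $\inf_D q\ge\min\{0,\inf_{\partial D}q\}$. Since $q>0$ on $\partial D$, the right side is $0$, not $\inf_{\partial D}V$.

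A term $cq$ with $c\ge0$ is exactly what lets a supersolution dip below its boundary values. For instance, $\Delta u=cu$ with $u=1$ on $\partial D$ forces $u<1$ inside.

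In fact your conclusion $V\ge h\inf_{\partial D}V$ is false already for $\gamma=\gamma_0$. Take the bubble $V=U_{\Lambda,a}$. Then $V(a)/\inf_{\partial D}V=(1+R^2/\Lambda^2)^{(n-2)/2}\to1$ as $\Lambda\to\infty$, whereas $h(a)=1+R^2$. Your remark that you get something stronger than the factor $\tfrac12$ should have been a warning sign. A positive \emph{super}solution can serve to establish a maximum principle, but it cannot be the comparison function that yields a lower bound.

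\textbf{How to repair it.} What your quotient computation does prove correctly is positivity: if $-L_\gamma w\ge0$ in $D$ and $w\ge0$ on $\partial D$, then $w\ge0$ (apply the argument to $w/h$).

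To get the quantitative bound, apply this to $w=V-mb$ with $m=\inf_{\partial D}V$, where $b$ is a \emph{sub}solution equal to $1$ on $\partial D$. For example, take $b=1-C_0(R^2-|x-a|^2)$, choosing $C_0$ with $C_0\inf_D\Delta_\gamma|x-a|^2\ge c(n)\sup|\Scal_\gamma|$ and $C_0r_0^2\le\tfrac12$. Then $\tfrac12\le b\le1$ and $-L_\gamma b\le0$. It follows that $-L_\gamma w=n(n-2)V^p+mL_\gamma b\ge0$ and $w=0$ on $\partial D$, so $V\ge mb\ge m/2$. This is where the factor $\tfrac12$ comes from.

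With this repair your argument becomes the paper's proof, with one difference. The paper obtains $w\ge0$ by testing with $w_-$ and using coercivity of the Dirichlet form of $-L_\gamma$ on $H^1_0(D)$ for small $R$ (via Poincar\'e). Your version would instead use the positive supersolution $h$; both routes work.

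Your treatment of the second assertion via \eqref{eq:theta-ratio} is correct as written.
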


\begin{proof}
In a fixed small neighborhood of $\gamma_0$, the metrics are uniformly
elliptic, their coefficients have uniform $C^1$ bounds, and
$|\Scal_\gamma|\le C$.
After fixing a sufficiently small neighborhood \(\mathcal U_0\), there exist \(r_0,c_0>0\) such that \(\Delta_\gamma|x-a|^2\ge c_0\) throughout every ball \(B_R(a)\Subset B_1\) with \(R\le r_0\), for every \(\gamma\in\mathcal U_0\). Choose $C_0$ so that
$C_0c_0\ge c(n)\sup|\Scal_\gamma|$, and require $C_0r_0^2\le1/2$.
Then
\[
b(x):=1-C_0(R^2-|x-a|^2),\qquad
\tfrac12\le b\le1,\qquad -L_\gamma b\le0,
\qquad b|_{\partial D}=1.
\]
After decreasing \(r_0\), the Dirichlet form of \(-L_\gamma\) on \(D\) 
\[
\int_D
\left(
|D\varphi|_\gamma^2+
c(n)\operatorname{Scal}_\gamma\varphi^2
\right)dV_\gamma,
\]
is uniformly coercive for every \(\gamma\in\mathcal U_0\) and \(R\le r_0\): 
\[
c(n)\int_D(\operatorname{Scal}_\gamma)_-\varphi^2\,dV_\gamma
\le CR^2\int_D|D\varphi|_\gamma^2\,dV_\gamma.
\quad \varphi\in H^1_0(D).
\] 
Uniform ellipticity and Poincaré’s inequality justify this estimate. Choose $r_0$ so that $Cr_0^2\le1/2$.
Then the Dirichlet form is uniformly coercive for every
$\gamma\in\mathcal U_0$ and $R\le r_0$. Set $m=\inf_{\partial D}V$. Since $V-mb\ge0$ on $\partial D$
and $-L_\gamma(V-mb)\ge0$ in $D$, testing with $(V-mb)_-$
and using coercivity gives $V\ge mb\ge m/2$. Dividing by $\Theta$ proves the final assertion.
\end{proof}

\begin{proof}[Proof of Theorem~\ref{thm:main}]
\step{Step 1. The balls and the initial pair.}
Fix a unit vector $e$ and a sufficiently small $r_1>0$. Set
\[
r_j=4^{1-j}r_1,\qquad a_j=r_je,\qquad
\rho_j=r_j/64,\qquad D_j=B_{2\rho_j}(a_j),
\qquad c_*:=\frac{m_*}{16C_n},\quad d_j=c_*2^{-j-4}.
\]
The balls $B_{4\rho_j}(a_j)$ are pairwise disjoint in $B_{1/2}\setminus\{0\}$.
Fix a smooth function $\chi_*:[0,\infty)\to[0,1]$ equal to one on
$[0,1]$ and zero on $[2,\infty)$, and set
\begin{equation}\label{eq:stage-cutoff}
\chi_j(r):=\chi_*(4r/r_j).
\end{equation}
Thus $\chi_j=1$ for $r\le r_j/4$ and $\chi_j=0$ for $r\ge r_j/2$.
Since $|a_j|-4\rho_j=15r_j/16$, $\chi_j$ vanishes on a neighborhood of
$\overline{B_{4\rho_j}(a_j)}$.
Start with the admissible pair $(\gamma_0,V_0)$ from
Lemma~\ref{lem:base-inverse}, using $\chi_1$.

\step{Step 2. Choosing each scale after its background and cutoff.}
Suppose $(\gamma_{j-1},V_{j-1})$ has been constructed. The current
metric is still $\gamma_0$ on $B_{4\rho_j}(a_j)$. For $j\ge2$,
first replace the preceding cutoff $\chi_{j-1}$ by $\chi_j$ in the
representation of variations at this fixed background. Equation
\eqref{eq:cutoff-inverse-bound}, with $T=T_{j-1}$, gives a finite
inverse bound for the new cutoff.
The change of cutoff is controlled by
\[
\| (\chi_{j-1}-\chi_j)\partial_TV_{T_{j-1}}\|_{H^1_\delta}^2
\le C\int_{r_j/4}^{2r_j}r^{-1-2\delta}\dd r
\le C r_j^{-2\delta}.
\]
Indeed $\chi_{j-1}(r)=\chi_*(r/r_j)$,
$|\partial_TV_T|\le Cr^{-(n-2)/2}$, and
$|D\partial_TV_T|\le Cr^{-n/2}$, uniformly in $T$.
On the cutoff transition annuli, $|D\chi_i|\le C/r$.
The constant in this display is independent of $j$ and the phase.
The resulting full inverse bound may grow with $j$.

Fix $(\gamma_{j-1},V_{j-1})$, the ball $B_{4\rho_j}(a_j)$,
the phase cutoff $\chi_j$,  the cutoff
used in \eqref{eq:metric-profile} to localize the metric perturbation,
and the bubble cutoff used in \eqref{eq:cutoff-bubble}, with
$(a,\rho)=(a_j,\rho_j)$. Also fix a finite bound for
\[
\bigl(\widehat\cL^{\chi_j}_{\gamma_{j-1},V_{j-1}}\bigr)^{-1}
:H^{-1}_\delta\longrightarrow\cX_\delta,
\]
provided by \eqref{eq:cutoff-inverse-bound} for $j\ge2$ and by Lemma~\ref{lem:base-inverse} for $j=1$.
All these choices are made before choosing $\mu_j$. The constants and smallness thresholds
in Lemmas~\ref{lem:background-continuation} and \ref{lem:projected-inverse}
and Proposition~\ref{prop:projected-solution} are then fixed before
choosing $\mu_j$. In Proposition~\ref{prop:reduced-minimum}, shrink the
parameter neighborhood to absorb the $C|b|^2$ term in the normalized
Hessian estimate, retaining the constants obtained on the initial
compact parameter set. The radius of this neighborhood may depend on
the stage. After these choices, apply
Proposition~\ref{prop:one-bubble} to
$(\gamma,V)=(\gamma_{j-1},V_{j-1})$.
Choose $\mu_j$ sufficiently small and denote the final output at
$\mu=\mu_j$ by $(\gamma_j,V_j)$, its Fowler phase by $T_j$,
and its selected point by $x_j$, such that
\begin{align}\label{eq:inductive-metric}
\|\gamma_j-\gamma_{j-1}\|_{C^m(\overline{B_1})}
&\le2^{-j}r_j^j,\qquad 0\le m\le j+2,\\
\|V_j-V_{j-1}\|_{\cC^m_{\Theta;0}(X\setminus D_j)}
&\le d_j,\qquad 0\le m\le j,\label{eq:inductive-solution}\\
|T_j-T_{j-1}|&\le2^{-j},\label{eq:inductive-phase}\\
|x_j-a_j|&\le\rho_j/4,
\qquad r_j^{n-2}V_j(x_j)\ge2^{2j}.\label{eq:inductive-peak}
\end{align}
These finitely many conditions follow from
\eqref{eq:insertion-data}, \eqref{eq:outside-convergence}, and
\eqref{eq:peak} by decreasing $\mu_j$ after fixing the current data.
The estimates concern actual metric and solution differences in fixed
norms. The new pair is admissible, and
\eqref{eq:cutoff-inverse-bound} supplies the inverse for the next cutoff.
The small phase increments determine consistent real representatives
of $T_j$.
The restrictions imposed on \(r_1\) in Step 1, together with \eqref{eq:inductive-metric}, keep every \(\gamma_j\) in \(\mathcal U_0\) and ensure uniform comparability with \(\gamma_0\)

\step{Step 3. The uniform lower bound.}

Every point of $\partial D_j$ lies outside the other insertion balls.
By \eqref{eq:inductive-solution} with $m=0$, we have
\[
\frac{V_j}{\Theta}\ge v_*(-\log r)-\sum_{i=1}^j d_i
\ge m_*/2\quad\text{on }\partial D_j.
\]
Lemma~\ref{lem:local-lower} therefore yields
\begin{equation*}
\frac{V_j}{\Theta}\ge\frac{m_*}{4C_n}=4c_*
\quad\text{on }D_j.
\end{equation*}
At each later stage $i>j$, this quotient changes on $D_j$ by at most
$d_i$. Since $\sum_i d_i=c_*/16$ and $m_*=16C_n c_*$,
\[
\begin{aligned}
\left.\frac{V_\ell}{\Theta}\right|_{D_j}
&\ge4c_*-\sum_{i=j+1}^\ell d_i>c_*\quad(\ell\ge j),\\
\left.\frac{V_\ell}{\Theta}\right|_{X\setminus\bigcup_{i=1}^\ell D_i}
&\ge m_*-\sum_{i=1}^\ell d_i\ge c_*.
\end{aligned}
\]
This proves the same bound at every finite stage and for the limit:
\begin{equation}\label{eq:global-cyl-lower}
V_\infty(x)\ge c_*\Theta(x)=c_*|x|^{-(n-2)/2}.
\end{equation}
\step{Step 4. The smooth limiting metric.}
For each fixed $m$, \eqref{eq:inductive-metric} is summable in $C^m$.
Thus $\gamma_j\to\gamma_\infty$ smoothly on $\overline{B_1}$.
The limit is positive definite, has Euclidean volume, and equals
$\gamma_0$ on the fixed boundary collar. Every tensor increment \(\gamma_j-\gamma_{j-1}\) vanishes near $0$,
so uniform convergence of each derivative series gives
\begin{equation*}
D^\beta(\gamma_\infty-\gamma_0)(0)
=\sum_{j\ge1}D^\beta(\gamma_j-\gamma_{j-1})(0)=0
\quad\text{for every multiindex }\beta.
\end{equation*}
Consequently $g_\infty=\omega^{4/(n-2)}\gamma_\infty$ descends to a
smooth metric on $\RP^n$ and agrees with $g_0$ to infinite order at $o$.

\step{Step 5. The limiting solution.}
On any compact subset of $\overline{B_1}\setminus\{0\}$, the norms
$\cC^m_{\Theta;0}$ and ordinary $C^m$ are equivalent, and all sufficiently late
balls $D_j$ are disjoint from that set. Summing
\eqref{eq:inductive-solution} gives smooth convergence to $V_\infty$,
including on the boundary collar. The equation and both boundary
conditions pass to the limit. Therefore
\[
-L_{\gamma_\infty}V_\infty=n(n-2)V_\infty^p,
\qquad u_\infty:=\omega^{-1}V_\infty>0
\]
defines a smooth solution on $\RP^n\setminus\{o\}$ for $g_\infty$.

\step{Step 6. Completeness.}
Near $0$, $\omega$ is bounded above and below, and
$d_{g_\infty}(x,o)\asymp r$. Thus \eqref{eq:global-cyl-lower} gives
\eqref{eq:main-lower}. Moreover,
\[
u_\infty^{4/(n-2)}g_\infty
=V_\infty^{4/(n-2)}\gamma_\infty
\ge C^{-1}c_*^{4/(n-2)}r^{-2}\gamma_0.
\]
Every curve approaching $0$ has infinite length, since its length is
bounded below by a positive multiple of $\int|\dd r|/r$.
After the boundary identification, $\mathbb{RP}^n\setminus\{o\}$
has no boundary, and the complement of a small neighborhood of $o$
is compact. Hence the conformal metric is complete.

\step{Step 7. The retained peaks.}
The point $x_j$ lies outside every later insertion ball, so
\[
|V_\infty(x_j)-V_j(x_j)|
\le\Theta(x_j)\sum_{i>j}d_i
\le C r_j^{-(n-2)/2}2^{-j}.
\]
By \eqref{eq:inductive-peak}, for large $j$,
$V_\infty(x_j)\ge\tfrac12 V_j(x_j)\ge2^{2j-1}r_j^{2-n}$.
Since $u_\infty=\omega^{-1}V_\infty$ and
$d_{g_\infty}(x_j,o)\asymp r_j$,
\[
d_{g_\infty}(x_j,o)^{n-2}u_\infty(x_j)\ge c2^{2j}\to\infty,
\qquad
d_{g_\infty}(x_j,o)^{(n-2)/2}u_\infty(x_j)
\ge c2^{2j}r_j^{-(n-2)/2}\to\infty.
\]
The first estimate proves \eqref{eq:main-limsup}.
The second excludes Fowler asymptotics.
\end{proof}

\begin{remark}\label{rem:limit-space}
Every finite stage has a Fowler asymptote and the phases converge.
The limit satisfies $V_\infty(x_j)/\Theta(x_j)\to\infty$ along the
selected peaks, which excludes asymptotics to a fixed Fowler profile.
\end{remark}

\section{Linear and nonlinear estimates for Fowler backgrounds}\label{sec:starting}

In this section, we prove the initial inverse in Lemma~\ref{lem:base-inverse}
and establish the nonlinear estimates and relative energy used in the
insertion. Lemmas~\ref{lem:nonlinear-map} and \ref{lem:relative-energy}
provide the differentiability needed for the reduction.

We first prove the inverse by separating spherical harmonics, using
$t=-\log r$ in this proof.

\begin{proof}[Proof of Lemma~\ref{lem:base-inverse}]
Let $\sigma$ be the unit round metric on $S^{n-1}$. In this proof,
put $x=e^{-t}\theta$, $t\ge0$, and normalize
the unknown and a source function by
\[
\bar w(t,\theta)=r^{(n-2)/2}w(r\theta),\qquad
\bar f(t,\theta)=r^{(n+2)/2}f(r\theta),\qquad r=e^{-t}.
\]

For distributions the source transformation is defined by duality.
Direct differentiation and polar integration give
\[
\begin{aligned}
r^{(n+2)/2}\cL_{\gamma_0,V_0}(r^{-(n-2)/2}\bar w)
&=\left[-\partial_t^2-\Delta_\sigma+\frac{(n-2)^2}{4}
             -n(n+2)v_*(t)^{p-1}\right]\bar w,\\
\|w\|_{H^1_{\pm\delta}(X)}
&\asymp\|e^{\pm\delta t}\bar w\|_{H^1([0,\infty)\times S^{n-1})},\\
\ip{f}{w}&=\ip{\bar f}{\bar w}_{\dd t\dd\theta}.
\end{aligned}
\]
The norm equivalence follows by expanding
$\partial_t(r^{(n-2)/2}w)=-r^{(n-2)/2}(r\partial_rw+\frac{n-2}{2}w)$;
the terms without derivatives control the lower order contributions
in both directions.
For smooth functions vanishing near the puncture, the radial cross
term cancels the Robin term:
\[
Q_{\gamma_0}(w,w)
=\int_0^\infty\!\int_{S^{n-1}}
\bigl(|\partial_t\bar w|^2+|\nabla_\sigma\bar w|^2
      +\tfrac{(n-2)^2}{4}|\bar w|^2\bigr)\dd\theta\dd t.
\]

Thus the weak form becomes the usual form on the half cylinder.
The transformed endpoint condition is Neumann for harmonics of even
degree and Dirichlet for harmonics of odd degree.
Reflect coefficients to the full line with their prescribed parity.
In the ODE calculations below, \(w_\ell\) and \(f\) denote spherical-harmonic coefficients of the normalized unknown and source. On the half-line, we use the weighted Sobolev norms
\[
\|u\|_{H_s^1(0,\infty)}
:=\|e^{st}u\|_{H^1(0,\infty)},
\qquad s\in\{-\delta,\delta\}.
\]
The source norms are dual to the corresponding \(H^1_{-\delta}\) test spaces. For \(t\ge0\), set
\[
\widetilde\chi(t):=\chi(e^{-t}),\qquad
\bar\psi_0(t)
:=e^{-(n-2)t/2}\psi_0(e^{-t})
=\widetilde\chi(t)v_*'(t).
\]
Thus \(\bar\psi_0\) vanishes near \(t=0\) and equals \(v_*'\) for all sufficiently large \(t\). 

We treat the radial mode, the harmonics of degree one, and the
higher harmonics separately. The radial equation determines the phase
variable. A spherical harmonic of degree $\ell$ has
eigenvalue $\lambda_\ell=\ell(\ell+n-2)$. The reflected coefficients
have parity $w_\ell(-t)=(-1)^\ell w_\ell(t)$.

\step{Step 1. The radial mode.}
Write
\[
\cL_{\rm rad}:=-\frac{\dd^2}{\dd t^2}+\frac{(n-2)^2}{4}
                  -n(n+2)v_*(t)^{p-1}.
\]

For scalar functions set $\mathcal W(f,g):=fg'-f'g$.
Let $y=v_*'$ and
$v_1=\partial_\eta v_\eta|_{\eta=m_*}$, with the profiles normalized at
their minima. The period is smooth in $\eta$ near $m_*$: smooth
dependence for the ODE and the condition $v_\eta''>0$ at each minimum
allow the implicit function theorem to be applied to the next return.
Both derivatives solve the radial homogeneous equation and satisfy
\[
y(0)=0,\quad v_1(0)=1,\quad v_1'(0)=0,\quad
\mathcal W(v_1,y)=v_*''(0)\ne0.
\]
Let $\mathcal P(\eta)$ denote the period. Differentiating
$v_\eta(t+\mathcal P(\eta))=v_\eta(t)$ with respect to the neck parameter,
we obtain
$v_1(t+\mathcal P(m_*))=v_1(t)-\mathcal P'(m_*)y(t)$. Consequently
\[
v_1(t)=v_{1,\rm per}(t)+\kappa t\,y(t)
\]
for a periodic smooth function $v_{1,\rm per}$ and $\kappa\in\R$.

On $t\ge0$, use the solution $v_1$ at the Neumann endpoint $t=0$ and the
solution $y$ at infinity. For smooth rapidly decaying $f$, applying variation of
constants, we obtain
\[
h(t)=-\frac{y(t)}{\mathcal W_0}\int_0^t v_1(s)f(s)\dd s
      -\frac{v_1(t)}{\mathcal W_0}\int_t^\infty y(s)f(s)\dd s,
\qquad \mathcal W_0:=\mathcal W(v_1,y).
\]
The function $h$ satisfies
\[
-h''(t)+\left\{\frac{(n-2)^2}{4}-n(n+2)v_*(t)^{p-1}\right\}h(t)=f(t),
\qquad h'(0)=0.
\]
Its phase coefficient is
$\tau=-\mathcal W_0^{-1}\int_0^\infty v_1(s)f(s)\dd s$.
Use the decomposition
\[
h=\tau\bar\psi_0+w,\qquad
w=h-\tau\widetilde\chi y=(h-\tau y)+\tau(1-\widetilde\chi)y,
\]
where $\bar\psi_0$ is defined before Step 1 of  this proof.
Since $\widetilde\chi=0$ near $0$, one has $w'(0)=h'(0)=0$; since $\widetilde\chi=1$
on the tail, $w=h-\tau y$ there. Thus $w$ has both the required parity
and the required decay. The even extension of $\bar\psi_0$ to the full line is
$\widetilde\chi(|t|)y(|t|)$, smooth because it vanishes near $t=0$.

Equivalently, one may first determine $\tau$ and then solve
\begin{equation*}
\cL_{\rm rad}w=f-\tau\cL_{\rm rad}\bar\psi_0,
\qquad w'(0)=0.
\end{equation*}
Indeed, $\cL_{\rm rad}\bar\psi_0=-\widetilde\chi''y-2\widetilde\chi'y'$ is supported in the
transition of $\widetilde\chi$, and integration by parts gives
\[
\int_0^\infty v_1\cL_{\rm rad}\bar\psi_0\dd t=-\mathcal W_0,
\qquad
\int_0^\infty v_1(f-\tau\cL_{\rm rad}\bar\psi_0)\dd t=0.
\]
The second identity is the compatibility condition for a decaying
radial solution with the Neumann condition at $0$. For weak sources,
it is interpreted in the weighted duality below.

After subtracting $\tau y(t)$, the integral representing the decaying
part involves only $s\ge t$. On this region, set
\[
K(t,s)=\mathcal W_0^{-1}\{y(t)v_1(s)-v_1(t)y(s)\}.
\]
Substituting the decomposition of $v_1$, we obtain
\[
\mathcal W_0 K(t,s)=y(t)v_{1,\rm per}(s)-v_{1,\rm per}(t)y(s)
             +\kappa(s-t)y(t)y(s).
\]
Consequently $K$, $\partial_tK$, $\partial_sK$, and
$\partial_t\partial_sK$ are bounded by $C(1+s-t)$. The diagonal identities are
$K(t,t)=0$ and $\partial_sK(t,t)=-1$.

For weak data, the radial test space on $[0,\infty)$ is
$H^1_{-\delta}$ with no zero trace condition at $0$. Its dual elements
admit representations
\begin{equation}\label{eq:radial-weak-data}
\ip{f}{\varphi}
 =\int_0^\infty(f_0\varphi-f_1\varphi')\dd t,
 \qquad e^{\delta t}f_0,e^{\delta t}f_1\in L^2(0,\infty).
\end{equation}
The infimum of
$\|e^{\delta t}f_0\|_2+\|e^{\delta t}f_1\|_2$ over such
representations is equivalent to $\|f\|_{H^{-1}_\delta}$.
Indeed, the weighted norm on the test space is equivalent to the norm
of $(e^{-\delta t}\varphi,e^{-\delta t}\varphi')$ in $L^2\oplus L^2$.
Applying Hahn--Banach and the Riesz representation theorem proves
\eqref{eq:radial-weak-data}. The notation $f=f_0+\partial_t f_1$ below
always means this duality, including at the endpoint.

For these data define
\[
\begin{split}
\tau&=-\mathcal W_0^{-1}\int_0^\infty(v_1f_0-v_1'f_1)\dd t,\\
h(t)-\tau y(t)&=\int_t^\infty
       \{K(t,s)f_0(s)-\partial_sK(t,s)f_1(s)\}\dd s.
\end{split}
\]
The formula agrees with the smooth Green formula above, interpreted by
\eqref{eq:radial-weak-data}. In the weak sense,
\[
(h-\tau y)'(t)=\int_t^\infty
 \{\partial_tK(t,s)f_0(s)-\partial_t\partial_sK(t,s)f_1(s)\}\dd s-f_1(t).
\]
The last term is the diagonal contribution from
$\partial_sK(t,t)=-1$. Since
$(1+s)e^{-\delta s}\in L^1(0,\infty)$, we can apply the weighted
convolution estimate on $[0,\infty)$ to obtain
\begin{equation}\label{eq:radial-weak-estimate}
|\tau|+\|h-\tau\bar\psi_0\|_{H^1_\delta(0,\infty)}
 \le C\bigl(\|e^{\delta t}f_0\|_2+
                 \|e^{\delta t}f_1\|_2\bigr).
\end{equation}
Here $v_1,v_1'=O(1+t)$ control $\tau$, and replacing $y$ by the cutoff
$\bar\psi_0$ only adds a compactly supported $H^1$ term.
Taking the infimum over the representations proves the desired bound in the dual norm.

For every $\varphi\in C_c^\infty([0,\infty))$, the function $h$ satisfies
\[
\int_0^\infty
 \left[h'\varphi'+\left\{\frac{(n-2)^2}{4}
              -n(n+2)v_*^{p-1}\right\}h\varphi\right]\dd t
=\int_0^\infty(f_0\varphi-f_1\varphi')\dd t.
\]
This identity follows first for smooth representations and then by
approximation in the two weighted $L^2$ norms. It includes the endpoint
condition in the weak formulation. For a smooth source without an
endpoint distribution, integration by parts gives $h'(0)=0$.
Even reflection gives the radial weak equation with the required endpoint condition.

For uniqueness, a homogeneous solution with $h'(0)=0$ has the form
$h=c v_1$. If $h-\tau\bar\psi_0\in H^1_\delta$, then along a sequence
$t_j\to\infty$,
\[
c\,\mathcal W(v_1,y)
=\mathcal W(h-\tau y,y)(t_j)\to0.
\]
Here $y,y'$ are bounded and $\bar\psi_0=y$ on the tail. Since
$\mathcal W(v_1,y)\ne0$, we obtain $c=0$ and $h=0$.

\step{Step 2. Degree one.}
For the full-line estimates in Steps 2 and 3, choose a smooth even function \(\vartheta\) satisfying \(|\vartheta'|\le1\) and \(\vartheta(t)=|t|\) for \(|t|\ge2\). Define
\[
\|u\|_{H_s^1(\mathbb R)}
:=\|e^{s\vartheta(t)}u\|_{H^1(\mathbb R)},
\qquad s\in\{-\delta,\delta\}.
\]
These norms are equivalent to \(\|e^{s|t|}u\|_{H^1(\mathbb R)}\). We use the same weight with the product Sobolev norm on \(\mathbb R\times S^{n-1}\).

By the negative energy identity \eqref{eq:fixed-neck},
$|v_*'|<\frac{n-2}{2}v_*$. The positive solutions
\[
y_+(t)=e^t(\frac{n-2}{2}v_*+v_*'),\qquad
y_-(t)=e^{-t}(\frac{n-2}{2}v_*-v_*')
\]
of the equation for degree one satisfy
\begin{equation*}
\mathcal W(y_+,y_-)=-2nE_*\ne0.
\end{equation*}
Their periodic factors are bounded above and below by positive constants.
The Green kernel on the full line is
\[
G_1(t,s):=
\frac{y_+(\min\{t,s\})y_-(\max\{t,s\})}
     {-\mathcal W(y_+,y_-)}.
\]
It is continuous, its first derivatives have jumps on the diagonal, and
$G_1$, $\partial_tG_1$, and $\partial_sG_1$ are bounded off the diagonal
by $Ce^{-|t-s|}$. In the sense of distributions,
\[
\partial_t\partial_sG_1(t,s)=K_1(t,s)+\delta_{t=s},
\qquad |K_1(t,s)|\le Ce^{-|t-s|},
\]
where $K_1$ is the ordinary mixed derivative for $t\ne s$ and
$\delta_{t=s}$ is normalized by
$\int_\R\delta_{t=s}f(s)\dd s=f(t)$. The positive sign follows from the
jump $\partial_sG_1(s+,s)-\partial_sG_1(s-,s)=1$ across $t=s$.
For weak data $f=f_0+\partial_s f_1$, interpreted by duality as in
\eqref{eq:radial-weak-data} but on $\R$, define
\begin{align*}
h(t)&=\int_\R G_1(t,s)f_0(s)\dd s
       -\int_\R\partial_sG_1(t,s)f_1(s)\dd s,\\
h'(t)&=\int_\R\partial_tG_1(t,s)f_0(s)\dd s
       -\int_\R K_1(t,s)f_1(s)\dd s-f_1(t).
\end{align*}
Since $|\vartheta(t)-\vartheta(s)|\le|t-s|$ and $0<\delta<1$,
weighted convolution with $e^{-(1-\delta)|t-s|}$ proves
\[
\|h\|_{H^1_\delta(\R)}
\le C\bigl(\|e^{\delta\vartheta}f_0\|_2+
             \|e^{\delta\vartheta}f_1\|_2\bigr).
\]
Taking the infimum over the dual representations proves the estimate in
$H^{-1}_\delta$. Approximation establishes the weak equation. The two
homogeneous solutions grow at opposite ends, and no nonzero combination
belongs to $H^1_\delta$. This proves uniqueness. By reflection symmetry,
the inverse restricts to odd functions, as required by the Dirichlet condition for degree one.

\step{Step 3. Higher harmonics.}
Since $v_*(t)<2^{-(n-2)/2}$, we have
$n(n+2)\,v_*(t)^{p-1}\le\frac{n(n+2)}4$. For $\ell\ge2$,
\[
c_\ell:=\frac{(n-2)^2}{4}+\lambda_\ell-n(n+2)v_*^{p-1}
\ge \frac{n+2}{2}+(\lambda_\ell-2n).
\]

For weak data, conjugate by the weight and put
$Z(t)=e^{\delta\vartheta(t)}w_\ell(t)$.
The resulting bilinear form on $H^1(\R)$ is
\[
\mathcal B_\ell(Z,\varphi)
=\int_\R\bigl\{(Z'-\delta\vartheta'Z)
 (\varphi'+\delta\vartheta'\varphi)+c_\ell Z\varphi\bigr\}\dd t.
\]
On the diagonal this form satisfies
\[
\begin{aligned}
\mathcal B_\ell(Z,Z)
=\int_\R\bigl\{|Z'|^2+(c_\ell-\delta^2(\vartheta')^2)Z^2\bigr\}\dd t
\ge c
\int_\R\bigl\{|Z'|^2+(1+\lambda_\ell)Z^2\bigr\}\dd t,
\end{aligned}
\]
with $c=c(n,\delta)>0$, since $|\vartheta'|\le1$ and $\delta<1$.
Sum the forms over an orthonormal spherical harmonic basis of degrees
at least two. On this closed subspace, Parseval's identity and the
uniform bounds above imply
\[
\begin{split}
\sum_{\ell\ge2}\sum_m\mathcal B_\ell(Z_{\ell m},Z_{\ell m})
&\ge c\|Z\|_{H^1(\R\times S^{n-1})}^2,\\
|\langle e^{\delta\vartheta}f,Z\rangle|
&\le C\|f\|_{H^{-1}_\delta}\|Z\|_{H^1(\R\times S^{n-1})}.
\end{split}
\]
Here $Z_{\ell m}$ are the spherical harmonic coefficients of
$Z=e^{\delta\vartheta}w$, and $m$ indexes the basis in each eigenspace.
Multiplication of the source by the weight is understood by duality.
By the Lax--Milgram theorem,
\[
\|w\|_{H^1_\delta}\le C\|f\|_{H^{-1}_\delta}.
\]
The required parity is preserved because the weight and coefficients
are even. Combining this bound with \eqref{eq:radial-weak-estimate} and
the estimate for degree one, and undoing the normalization, proves the lemma.
\end{proof}

\begin{lemma}\label{lem:ground-state}
Suppose $\gamma$ satisfies the metric conditions stated after
\eqref{eq:conformal-covariance}. Let $V>0$ be smooth on $\overline{B_1}\setminus\{0\}$ and
satisfy \eqref{eq:background} and \eqref{eq:boundary-matching}.
Then $Q_\gamma$ is coercive on $H^1_{\mathrm{ev}}(B_1)$ with the
norm \eqref{eq:unweighted-even-space}.
\end{lemma}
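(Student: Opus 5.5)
The plan is a ground-state (Picone) substitution: a positive solution $V$ with the antipodal matching condition \eqref{eq:boundary-matching} forces $Q_\gamma$ to dominate a Hardy-type weight. I would then turn this into full coercivity by interpolating with the trivial lower bound for $Q_\gamma$. By the density statement after \eqref{eq:weak-robin-form}, and the identification of the weight-zero completion with $H^1_{\mathrm{ev}}(B_1)$, it suffices to prove the estimate for smooth $w$ on $\overline{B_1}$ that vanish near $0$ and have antipodally even trace.

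\textbf{Step 1 (Picone identity).} For such $w$ set $\varphi=w/V$. This is smooth with support away from $0$, since $V>0$ is smooth on $\overline{B_1}\setminus\{0\}$. Its trace is antipodally even, because $V(1,\theta)=V(1,-\theta)$ by the first line of \eqref{eq:boundary-matching}. Expanding $|D(V\varphi)|_\gamma^2$ and integrating the cross term $\int 2V\varphi\ip{DV}{D\varphi}_\gamma=\int\ip{DV}{D(V\varphi^2)}_\gamma$ by parts (using $\dd V_\gamma=\dd x$ and $\gamma=\gamma_0$ on the collar) should give
\[
Q_\gamma(w,w)=\int_{B_1}V^2|D\varphi|_\gamma^2\dd x+\int_{B_1}(-L_\gamma V)\,V\varphi^2\dd x
+\int_{\partial B_1}\Bigl(\partial_r+\tfrac{n-2}{2}\Bigr)V\cdot V\varphi^2\dd S.
\]
The boundary integrand is the product of $(\partial_r+\frac{n-2}{2})V$, which is antipodally odd by the second line of \eqref{eq:boundary-matching}, and $V\varphi^2$, which is even. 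Hence the boundary integral vanishes. Inserting the equation $-L_\gamma V=n(n-2)V^p$ gives
\[
Q_\gamma(w,w)\ge n(n-2)\int_{B_1}V^{p-1}w^2\dd x .
\]

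\textbf{Step 2 (Hardy weight).} By \eqref{eq:background} and positivity, $V/\Theta$ is bounded below by a positive constant on $X$, as noted after Lemma~\ref{lem:base-inverse}. Since $\Theta^{p-1}=|x|^{-2}$, this gives $V^{p-1}\ge c_1|x|^{-2}$ on $X$. Hence
\[
Q_\gamma(w,w)\ge c_2\int_{B_1}|x|^{-2}w^2\dd x .
\]

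\textbf{Step 3 (interpolation).} Directly from \eqref{eq:weak-robin-form}, the boundary term is nonnegative, $\gamma$ is uniformly elliptic, and $|\Scal_\gamma|\le C\le C|x|^{-2}$ on $B_1$. Therefore
\[
Q_\gamma(w,w)\ge c_3\int|Dw|^2\dd x-C_4\int|x|^{-2}w^2\dd x .
\]
Write $Q_\gamma(w,w)=\varepsilon Q_\gamma(w,w)+(1-\varepsilon)Q_\gamma(w,w)$, apply the bound from Step 3 to the first part and the bound from Step 2 to the second, and choose $\varepsilon$ small so that $\varepsilon C_4\le\frac12(1-\varepsilon)c_2$. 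This yields
\[
Q_\gamma(w,w)\ge c\,\|w\|_{H^1_{\mathrm{ev}}}^2 .
\]
Since the form is continuous on $H^1_{\mathrm{ev}}(B_1)$, the estimate extends by density to the whole space.

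\textbf{Main obstacle.} The delicate point is Step 1. One must check that the parity structure makes the boundary term vanish exactly: the Robin coefficient $\frac{n-2}{2}$ in $Q_\gamma$ has to combine with $V\partial_rV$ into the odd quantity in \eqref{eq:boundary-matching}. One must also justify the integration by parts near the puncture, which is harmless here because $\varphi$ vanishes near $0$. Everything else is routine.
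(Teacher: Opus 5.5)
Your proof is correct. Its core is the paper's argument. The paper also writes the test function as $V\zeta$ and kills the boundary term by pairing the antipodally odd quantity $(\partial_r+\frac{n-2}{2})V$ with an even factor. It then obtains $Q_\gamma(\varphi,\varphi)=\int V^2|D(\varphi/V)|_\gamma^2+n(n-2)\int V^{p-1}\varphi^2$ and uses $V^{p-1}\ge c|x|^{-2}$ for the Hardy part of the norm. One slip in your narration: it is $\int\bigl(\varphi^2|DV|_\gamma^2+2V\varphi\ip{DV}{D\varphi}_\gamma\bigr)$, not the cross term alone, that equals $\int\ip{DV}{D(V\varphi^2)}_\gamma$. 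Your displayed identity is nevertheless correct.

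The genuine difference is in how you recover $\int|Dw|^2$. The paper keeps the gradient term of the identity and uses $|D\log V|\le C/r$, which follows from the weighted derivative bound on $w_0$ in \eqref{eq:background}. This gives pointwise $|Dw|^2\le 2V^2|D(w/V)|^2+Cr^{-2}w^2$. You discard that gradient term. Instead you take a convex combination with the elementary lower bound $Q_\gamma(w,w)\ge c_3\int|Dw|^2-C_4\int|x|^{-2}w^2$, which comes from uniform ellipticity, bounded $\Scal_\gamma$, and the nonnegative Robin term.

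What each route buys:
\begin{itemize}
\item Your route needs only the lower bound $V\ge c\Theta$ and no derivative information on $V$. The coercivity constant then depends only on $\inf V/\Theta$, the ellipticity constants of $\gamma$, and $\sup|\Scal_\gamma|$.
\item The paper's route is a one-line pointwise estimate. Its constant, however, also involves $\sup r|D\log V|$.
\end{itemize}
Both arguments finish with the same density step.
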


\begin{proof}
The assumptions give $c\Theta\le V\le C\Theta$ and
$|D\log V|\le C/r$. For a smooth $\varphi$ vanishing near $0$ and
having even boundary trace, write $\varphi=V\zeta$ and integrate the
equation for $V$ against $V\zeta^2$. The boundary contribution is
\[
\int_{\partial B_1}\frac{(\partial_r+\frac{n-2}{2})V}{V}\varphi^2\dd S=0,
\]
since the numerator is antipodally odd and the other factors are even.
We obtain

\begin{equation}\label{eq:ground-state-identity}
Q_\gamma(\varphi,\varphi)
=\int_{B_1}V^2|D(\varphi/V)|_\gamma^2\dd x
+n(n-2)\int_{B_1}V^{p-1}\varphi^2\dd x.
\end{equation}
The second term controls $\int r^{-2}\varphi^2$, because
$V^{p-1}\ge c r^{-2}$. Also
\[
\int|D\varphi|^2
\le C\int V^2|D(\varphi/V)|_\gamma^2+C\int r^{-2}\varphi^2.
\]
Density proves coercivity in the norm
\eqref{eq:unweighted-even-space}.
\end{proof}

We next vary the background phase and estimate the nonlinear residual.
Fix a metric $\gamma$ satisfying the conditions stated after
\eqref{eq:conformal-covariance} and a positive solution $V$ satisfying
\eqref{eq:background} and \eqref{eq:boundary-matching}, with phase $T$.
Keep $\chi$ fixed and set
\[
A_\tau(x):=V(x)+\Phi_{T,\tau}(|x|).
\]
Near $0$, where $\chi=1$, this changes the asymptotic phase:
\[
A_\tau(x)=V_{T+\tau}(x)+w_0(x),\qquad
\frac{A_\tau(x)}{\Theta(x)}
=v_*(-\log|x|+T+\tau)+O(|x|^\delta).
\]
On $\{\chi=0\}$, we have $A_\tau=V$.
Throughout $X$, $\partial_\tau A_\tau=\psi_{T+\tau}$.
For sufficiently small $|\tau|$, $A_\tau/\Theta$ has fixed positive
upper and lower bounds. For $w\in H^1_\delta$ and
$\varphi\in H^1_{-\delta}$, define the residual weakly by
\[
\ip{\Fop_\gamma(A_\tau+w)}{\varphi}
=\ip{\Fop_\gamma(A_\tau)}{\varphi}+Q_\gamma(w,\varphi)
-n(n-2)\int_X\bigl((A_\tau+w)_+^p
                         -A_\tau^p\bigr)\varphi\dd x.
\]
The membership of $\Fop_\gamma(A_\tau+w)$  and $\Fop_\gamma(A_\tau)$ in \(H^{-1}_\delta\), and their differentiability with respect to \(\tau\), are established in Step 2 of the proof of Lemma~\ref{lem:nonlinear-map} below.
Formal differentiation gives the following candidate differential: 
for $(\dot\tau,\dot w)\in\cX_\delta$, 
\[
D_{(\tau,w)}[\Fop_\gamma(A_\tau+w)](\dot\tau,\dot w)
=\{-L_\gamma-n(n+2)(A_\tau+w)_+^{p-1}\}
  (\dot\tau\psi_{T+\tau}+\dot w).
\]
In the following, we verify that the phase contribution belongs to \(H^{-1}_\delta\), despite \(\psi_{T+\tau}\notin H^1_\delta\), and establish that this formula is the Fréchet differential.

Define the nonlinear remainder at $(0,0)$ by
\[
\cN(\tau,w):=\widehat\cL_{\gamma,V}(\tau,w)-\Fop_\gamma(A_\tau+w).
\]
Since $\Fop_\gamma(V)=0$, we have $\cN(0,0)=0$. 
For the metric estimate, fix an open set $D$ with compact closure in $X$
such that $\chi=0$ on a neighborhood of $\overline D$.

\begin{lemma}\label{lem:nonlinear-map}
After fixing \((\gamma,V)\), \(\delta\), \(\chi\), and \(D\), there exist \(\varepsilon_0,\eta_0,C>0\)
 such that
$(\tau,w)\mapsto\Fop_\gamma(A_\tau+w)$ is $C^1$ from
$(-\varepsilon_0,\varepsilon_0)\times H^1_\delta$ to $H^{-1}_\delta$.
Its derivative at $(0,0)$ is $\widehat\cL_{\gamma,V}$.
For $|\tau|<\varepsilon_0$ and $w\in H^1_\delta$,
\begin{equation}\label{eq:augmented-comparison}
\bigl\|D_{(\tau,w)}[\Fop_\gamma(A_\tau+w)]
-\widehat\cL_{\gamma,V}\bigr\|_{\cX_\delta\to H^{-1}_\delta}
\le C\bigl(|\tau|+\|w\|_{H^1_\delta}
+\|w\|_{H^1_\delta}^{p-1}\bigr).
\end{equation}
For $0<\varepsilon<\varepsilon_0$ and
$\|(\tau,w)\|_{\cX_\delta},
 \|(\widetilde\tau,\widetilde w)\|_{\cX_\delta}\le\varepsilon$,
\begin{equation}\label{eq:nonlinear-remainder-bound}
\|\cN(\tau,w)-\cN(\widetilde\tau,\widetilde w)\|_{H^{-1}_\delta}
\le C(\varepsilon+\varepsilon^{p-1})
       \|(\tau-\widetilde\tau,w-\widetilde w)\|_{\cX_\delta}.
\end{equation}
For every smooth metric $\gamma_1$ on $B_1$ satisfying
\[
dV_{\gamma_1}=dx,\qquad
\gamma_1=\gamma \quad\text{on }B_1\setminus D,\qquad
\|\gamma_1-\gamma\|_{C^2(\overline D)}\le\eta_0,
\]
we have
\begin{equation}\label{eq:compact-metric-variation}
\|\Fop_{\gamma_1}(V)\|_{H^{-1}_\delta}
+\|L_{\gamma_1}-L_\gamma\|_{H^1_\delta\to H^{-1}_\delta}
\le C\|\gamma_1-\gamma\|_{C^2(\overline D)}.
\end{equation}
The derivative of $(\tau,w)\mapsto\Fop_{\gamma_1}(A_\tau+w)$ differs
from $\widehat\cL_{\gamma,V}$ by at most the right side of
\eqref{eq:augmented-comparison} plus
$C\|\gamma_1-\gamma\|_{C^2(\overline D)}$ in the operator norm from $\cX_\delta$ to $H^{-1}_\delta$.
\end{lemma}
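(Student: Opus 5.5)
The plan is to treat $\Fop_\gamma(A_\tau+w)$ as a sum of three pieces: the phase-dependent residual $\Fop_\gamma(A_\tau)$, the linear term $Q_\gamma(w,\cdot)$, and the Nemytskii difference $-n(n-2)\bigl((A_\tau+w)_+^p-A_\tau^p\bigr)$. Each piece is handled in the weighted duality between $H^1_\delta$ and $H^1_{-\delta}$. Throughout, the basic tool is the weighted Hardy inequality implicit in the norms \eqref{eq:weighted-spaces}, namely $\||x|^{-1-s}w\|_{L^2}\le\|w\|_{H^1_s}$, together with the pointwise bound $V^{p-1}\le C|x|^{-2}$, which follows from $c\Theta\le V\le C\Theta$.

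\textbf{Step 1: the phase residual.} Near $0$ we have $\chi=1$ and $\gamma=\gamma_0$, so $A_\tau=V_{T+\tau}+w_0$. Hence
\[
\Fop_\gamma(A_\tau)=-n(n-2)\bigl((V_{T+\tau}+w_0)^p-V_{T+\tau}^p-(V_T+w_0)^p+V_T^p\bigr).
\]
This is bounded by $C|\tau||x|^{-2}|w_0|$, because $|w_0|\le C|x|^\delta\Theta$. Pairing against $\varphi$ and applying Hardy on both sides gives an $H^{-1}_\delta$ bound of order $|\tau|\|w_0\|_{H^1_\delta}$. On the transition annulus of $\chi$ everything is smooth and compactly supported, and $\Fop_\gamma(A_\tau)$ is smooth in $\tau$ there. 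The $\tau$-derivative equals $\cL_{\gamma,A_\tau}\psi_{T+\tau}$, which belongs to $H^{-1}_\delta$ by the same argument the paper used for $\cL_{\gamma,V}\psi_T$. This derivative is continuous (indeed Lipschitz) in $\tau$, so $\tau\mapsto\Fop_\gamma(A_\tau)$ is $C^1$ with values in $H^{-1}_\delta$.

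\textbf{Step 2: the Nemytskii term.} For $w$ we use the pointwise expansion
\[
(A+w)_+^p-A^p-pA^{p-1}w=O\bigl(A^{p-2}w^2+|w|^p\bigr)
\]
when $p\le2$; since $n\ge30$, we have $p<2$, so the form $O(|w|^p)$ suffices. The difference of derivatives satisfies
\[
|(A+w)_+^{p-1}-A^{p-1}|\le C\min\bigl(A^{p-2}|w|,|w|^{p-1}\bigr)\le C|w|^{p-1}.
\]
The key estimate is then
\[
\int_X |w|^{p-1}|\dot w||\varphi|\,dx\le C\|w\|_{H^1_\delta}^{p-1}\|\dot w\|_{H^1_\delta}\|\varphi\|_{H^1_{-\delta}}.
\]
To prove it, apply Hölder with exponents $(n/2,\,2^*,\,2^*)$ after distributing the weights $|x|^{\mp\delta}$. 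The weighted Sobolev inequality (Sobolev on dyadic annuli plus Hardy, or equivalently Sobolev on the cylinder after the change of variables from the proof of Lemma~\ref{lem:base-inverse}) controls $|x|^{-\delta}w\in L^{2^*}$. The weights balance as $(p-1)\delta+\delta-\delta\ge0$, and $|x|\le1$ absorbs the excess. The same estimate applied to the phase direction $\dot\tau\psi_{T+\tau}$ requires care, because $\psi\notin H^1_\delta$. There we use instead the pointwise bound $|\psi|\le C\Theta$, which gives $|w|^{p-1}|\psi|\le C|x|^{-2}\bigl(|w|/\Theta\bigr)^{p-1}\Theta$. We then need $|x|^{-2}\Theta(|w|/\Theta)^{p-1}\in H^{-1}_\delta$. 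This is the main obstacle. I would handle it on the cylinder: with $\bar w=e^{\delta t}$-weighted $H^1$, Sobolev embedding on unit cylinder pieces $[k,k+1]\times S^{n-1}$ gives $\bar w\in L^{2^*}$ piecewise with summable weighted norms. The product $|\bar w|^{p-1}\bar\varphi$ is then controlled piecewise by Hölder in $L^{n/2}\cdot L^{2^*}$. Summing over $k$ with the weights $e^{(p-1)\delta k}e^{-\delta k}\le1$ yields the factor $\|w\|^{p-1}$. The $\tau$-dependence of $A_\tau^{p-1}$ contributes the term $C|\tau|$ in \eqref{eq:augmented-comparison}, since $|\partial_\tau A_\tau^{p-1}|\le C|x|^{-2}$. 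Together, Steps 1 and 2 give Fréchet differentiability, the formula for the derivative at $(0,0)$, continuity of the derivative, and \eqref{eq:augmented-comparison}.

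\textbf{Step 3: the remainder and metric variation.} The bound \eqref{eq:nonlinear-remainder-bound} follows from the mean value theorem along the segment joining $(\widetilde\tau,\widetilde w)$ to $(\tau,w)$, using $\cN=\widehat\cL-\Fop$ and \eqref{eq:augmented-comparison}. For \eqref{eq:compact-metric-variation}, the operator $L_{\gamma_1}-L_\gamma$ is a second-order operator with coefficients supported in $\overline D$, which is compact in $X$. Since $dV_{\gamma_1}=dx$, it is in divergence form. Its coefficients, including the scalar curvature difference, are bounded by $C\|\gamma_1-\gamma\|_{C^2}$. On $\overline D$ the weights are bounded above and below, so the operator norm bound follows. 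In addition, $\Fop_{\gamma_1}(V)=-(L_{\gamma_1}-L_\gamma)V$, with $V$ smooth on $\overline D$. The final derivative statement is the sum of \eqref{eq:augmented-comparison} and this operator bound, applied to both $\dot w$ and $\dot\tau\psi_{T+\tau}$. The latter is harmless, since $\chi=0$ near $\overline D$ means $\psi$ vanishes there.
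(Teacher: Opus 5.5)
Your treatment of $\Fop_\gamma(A_\tau)$ agrees with the paper. So do the $w$-column via $L^{n/2}$ H\"older, the remainder bound, and the metric variation. The gap is in the phase column of Step~2, the term $\{(A_\tau+w)_+^{p-1}-A_\tau^{p-1}\}\psi_{T+\tau}$.

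You majorize it by $|w|^{p-1}|\psi_{T+\tau}|\le Cr^{-2}\Theta(|w|/\Theta)^{p-1}$ and claim this majorant lies in $H^{-1}_\delta$ with norm at most $C\|w\|_{H^1_\delta}^{p-1}$. That claim is false. Use the cylinder normalization $\bar w=w/\Theta$, $\bar\varphi=\varphi/\Theta$, $t=-\log r$. The pairing with a test function is then $\int|\bar w|^{p-1}|\bar\varphi|\,dt\,d\theta$, with $e^{\delta t}\bar w\in H^1$ and $e^{-\delta t}\bar\varphi\in H^1$. On the strip $[k,k+1]\times S^{n-1}$ the two factors contribute $e^{-(p-1)\delta k}$ and $e^{+\delta k}$. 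The product weight is therefore $e^{(2-p)\delta k}$, which grows because $p<2$; you have the signs of the exponents reversed. For instance, radial $\bar w=e^{-\delta t}(1+t)^{-1}$ and $\bar\varphi=e^{\delta t}(1+t)^{-1}$ give $\int_0^\infty e^{(2-p)\delta t}(1+t)^{-p}\,dt=\infty$. So the majorant is not in $H^{-1}_\delta$ at all.

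The reason is structural. In the $w$-column, the decay needed against $\varphi\in H^1_{-\delta}$ is supplied by $\dot w\in H^1_\delta$. In the phase column, $\psi_{T+\tau}$ has size exactly $\Theta$ with no $r^\delta$ gain. The coefficient difference must therefore supply the full $r^\delta$ decay relative to $r^{-2}$, while $|w|^{p-1}$ only decays like $r^{(p-1)\delta}$.

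The fix is the other branch of the minimum you wrote down: $|(A_\tau+w)_+^{p-1}-A_\tau^{p-1}|\le CA_\tau^{p-2}|w|$. Together with $|\psi_{T+\tau}|\le C\Theta\le CA_\tau$ for small $|\tau|$, this gives the pointwise bound $Cr^{-2}|w|$. Then $\|f\|_{H^{-1}_\delta}\le C\|r^{1-\delta}f\|_{L^2}$ and the Hardy term in the $H^1_\delta$ norm yield $C\|w\|_{H^1_\delta}$. This is exactly the source of the linear $\|w\|_{H^1_\delta}$ term in \eqref{eq:augmented-comparison}, which your argument never produces.

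The same linear majorant $Cr^{-2}|w|$, with $r^{-1-\delta}w\in L^2$, is what the paper uses in dominated convergence. It justifies differentiating in $\tau$ for fixed $w$, and it gives continuity of the phase partial derivative in $(\tau,w)$ into $H^{-1}_\delta$. With your $|w|^{p-1}$ majorant that step fails as well.

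A smaller point concerns the $w$-column. Continuity of the $w$-partial in operator norm, needed for the $C^1$ claim, should use the two-point bound $|s_+^{p-1}-t_+^{p-1}|\le|s-t|^{p-1}$. Together with $|A_\tau-A_{\widetilde\tau}|\le C|\tau-\widetilde\tau|\Theta$, this handles changes in both arguments. The comparison with $A$ alone does not.
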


\begin{proof}
\step{Step 1. Weighted Sobolev and multiplication estimates.}
Apply the ordinary Sobolev inequality to $r^{-\delta}w$; its gradient
is controlled by \eqref{eq:weighted-spaces}. By duality,
\[
\|w\|_{L^{p+1}_\delta}\le C\|w\|_{H^1_\delta},
\qquad \|f\|_{H^{-1}_\delta}\le C\|f\|_{L^{2n/(n+2)}_\delta}.
\]

Placing reciprocal weights in the paired factors gives
\begin{equation}\label{eq:weighted-multiplication}
\|\beta w\|_{H^{-1}_\delta}\le C\|\beta\|_{L^{n/2}}\|w\|_{H^1_\delta}
\quad\text{and}\quad
\|\beta w\|_{H^{-1}_\delta}\le C\|r^2\beta\|_{L^\infty}\|w\|_{H^1_\delta}.
\end{equation}
The second estimate follows from the $r^{-2}|w|^2$ term in the norm.
We also use
$\|f\|_{H^{-1}_\delta}\le C\|r^{1-\delta}f\|_2$.

Since $0<p-1<1$, for $A>0$ we have
\begin{equation}\label{eq:scalar-Nemytskii}
|s_+^{p-1}-t_+^{p-1}|\le|s-t|^{p-1},
\qquad |(A+w)_+^{p-1}-A^{p-1}|\le C A^{p-2}|w|.
\end{equation}
The second inequality follows by separating $|w|\le A/2$ and its
complement. The first inequality and \eqref{eq:weighted-multiplication}
control the derivative in $w$ and give the remainder bound
\[
\|(A+w)_+^p-A^p-pA^{p-1}w\|_{H^{-1}_\delta}
\le C\|w\|_{H^1_\delta}^p.
\]
The weights satisfy
$r^{-\delta}|w|^p=r^{\delta(p-1)}|r^{-\delta}w|^p$ and $r\le1$.

\step{Step 2. The phase derivative.}
Near $0$, where \(\gamma=\gamma_0\), \(\chi=1\),
\[
-L_\gamma=-\Delta,\qquad
-\Delta V_T=n(n-2)V_T^p,\qquad
-\Delta V_{T+\tau}=n(n-2)V_{T+\tau}^p.
\]
Subtracting the equation for \(V_T\) from the assumed equation \eqref{eq:background} for \(V=V_T+w_0\) gives
\[
-\Delta w_0
=n(n-2)\bigl((V_T+w_0)^p-V_T^p\bigr)
\]
in that neighborhood. Since \(A_\tau=V_{T+\tau}+w_0\) there, we obtain
\[
\mathcal F_\gamma(A_\tau)
=n(n-2)\left[
(V_T+w_0)^p-V_T^p
+V_{T+\tau}^p-(V_{T+\tau}+w_0)^p
\right].
\]
The profiles \(V_T\), \(V_{T+\tau}\), \(V_T+w_0\) and \(A_\tau\) are uniformly comparable to \(\Theta\) in this neighborhood. The interval \(|\tau|<\varepsilon_0\) has already been chosen so that \(A_\tau\) is positive throughout \(X\). These  give
\[
|\mathcal F_\gamma(A_\tau)|\le Cr^{-2}|w_0|.
\]
Furthermore, in this neighborhood, 
\begin{equation}\label{eq:phase-column-cancellation}
\partial_\tau[\Fop_\gamma(A_\tau)]
=n(n+2)\{V_{T+\tau}^{p-1}-(V_{T+\tau}+w_0)^{p-1}\}
                           \partial_TV_{T+\tau}.
\end{equation}
Differentiating once more gives
\[
\begin{aligned}
\partial_\tau^2\mathcal F_\gamma(A_\tau)
=n(n+2)\Bigl\{
&(p-1)\bigl[V_{T+\tau}^{p-2}
 -(V_{T+\tau}+w_0)^{p-2}\bigr]
 (\partial_TV_{T+\tau})^2\\
&+\bigl[V_{T+\tau}^{p-1}
 -(V_{T+\tau}+w_0)^{p-1}\bigr]
 \partial_T^2V_{T+\tau}
\Bigr\}.
\end{aligned}
\]
The mean value theorem gives, for \(q=p-1\) and \(q=p-2\),
\[
\left|V_{T+\tau}^{q}-(V_{T+\tau}+w_0)^q\right|
\le C\Theta^{q-1}|w_0|.
\]
These, together with \(
|\partial_TV_{T+\tau}|+|\partial_T^2V_{T+\tau}|
\le C\Theta
\), then yield
\[
|\partial_\tau\mathcal F_\gamma(A_\tau)|
+
|\partial_\tau^2\mathcal F_\gamma(A_\tau)|
\le Cr^{-2}|w_0|
\quad\text{near }0.
\]
Away from this neighborhood, \(\mathcal F_\gamma(A_\tau)\) is smooth and depends smoothly on \(\tau\). Since \(A_\tau\) satisfies the outer matching conditions, its weak residual has no outer boundary contribution.  Combining the bounds on this compact region
with the estimate near $0$, and using the weighted source estimate
from Step~1, shows that
$\tau\mapsto\Fop_\gamma(A_\tau)$ is $C^2$ with values in
$H^{-1}_\delta$, with uniformly bounded second derivative
for $|\tau|<\varepsilon_0$. Since
\[
\left.\partial_s[\Fop_\gamma(A_s)]\right|_{s=0}
=\widehat\cL_{\gamma,V}(1,0),
\]
integration of the second derivative gives
\[
\|\partial_\tau [\Fop_\gamma(A_\tau)]
-\widehat\cL_{\gamma,V}(1,0)\|_{H^{-1}_\delta}
\le C|\tau|.
\]

For fixed $w\in H^1_\delta$, the mean value formula and
\eqref{eq:scalar-Nemytskii} bound the phase difference quotients
of $(A_\tau+w)_+^p-A_\tau^p$ by $Cr^{-2}|w|$.
Since $r^{-1-\delta}w\in L^2(X)$, dominated convergence after
multiplication by $r^{1-\delta}$ justifies differentiation in
$H^{-1}_\delta$ and gives
\[
\partial_\tau[\Fop_\gamma(A_\tau+w)]
-\partial_\tau[\Fop_\gamma(A_\tau)]
=
-n(n+2)\bigl\{(A_\tau+w)_+^{p-1}-A_\tau^{p-1}\bigr\}
\psi_{T+\tau}.
\]
By \eqref{eq:scalar-Nemytskii}, the absolute value of the
right-hand side is at most $Cr^{-2}|w|$. Hence
\[
\bigl\|
\partial_\tau[\Fop_\gamma(A_\tau+w)]
-\partial_\tau[\Fop_\gamma(A_\tau)]
\bigr\|_{H^{-1}_\delta}
\le C\|w\|_{H^1_\delta}.
\]

We next prove that the phase derivative
\[
(\tau,w)\longmapsto
\partial_\tau[\Fop_\gamma(A_\tau+w)]
\]
is continuous from
$(-\varepsilon_0,\varepsilon_0)\times H^1_\delta$
to $H^{-1}_\delta$.
Let $(\tau_j,w_j)\to(\tau,w)$ in $\cX_\delta$.
From every subsequence, extract a further subsequence for which $r^{-1-\delta}w_j$ converges
pointwise and is bounded in absolute value by one $L^2$ function.
Apply dominated convergence to the displayed product
$-n(n+2)\{(A_\tau+w)_+^{p-1}-A_\tau^{p-1}\}\psi_{T+\tau}$,
evaluated at $(\tau_j,w_j)$ and multiplied by $r^{1-\delta}$.
This gives convergence in $L^2$. Together with
smooth dependence of $\partial_\tau[\Fop_\gamma(A_\tau)]$ on $\tau$,
this gives
\[
\left.\partial_\tau[\Fop_\gamma(A_\tau+w_j)]\right|_{\tau=\tau_j}
\to\partial_\tau[\Fop_\gamma(A_\tau+w)]
\quad\text{in }H^{-1}_\delta
\]
for the original sequence.

For \(|\tau|<\varepsilon_0\) and \(w\in H^1_\delta\), consider the partial derivative with respect to \(w\). Split the coefficient difference into
$(A_\tau+w)_+^{p-1}-A_\tau^{p-1}$ and $A_\tau^{p-1}-V^{p-1}$.
The scalar Hölder estimate and the Sobolev embedding give
\[
\bigl\|(A_\tau+w)_+^{p-1}-A_\tau^{p-1}
\bigr\|_{L^{n/2}(X)}
\le C\|w\|_{H^1_\delta}^{p-1}.
\]
The uniform comparability of $A_\tau$ and $V$ to $\Theta$ throughout \(X\), together
with $|A_\tau-V|\le C|\tau|\Theta$, gives
\[
\bigl\|r^2(A_\tau^{p-1}-V^{p-1})\bigr\|_{L^\infty(X)}
\le C|\tau|.
\]
Applying the two estimates in
\eqref{eq:weighted-multiplication} to these respective
coefficient differences gives, for every
$\dot w\in H^1_\delta$,
\[
\left\|
D_w[\Fop_\gamma(A_\tau+w)][\dot w]
-\widehat\cL_{\gamma,V}(0,\dot w)
\right\|_{H^{-1}_\delta}
\le
C\left(|\tau|+\|w\|_{H^1_\delta}^{p-1}\right)
\|\dot w\|_{H^1_\delta}.
\]
To compare these derivatives at two points, first change \(w\) and then \(\tau\). The scalar Hölder estimate controls the first coefficient difference in \(L^{n/2}\) by \(C\|w-\widetilde w\|_{H^1_\delta}^{p-1}\). Since
\[
|A_\tau-A_{\widetilde\tau}|
\le C|\tau-\widetilde\tau|\Theta,
\]
the scalar Hölder estimate therefore controls the second coefficient difference after multiplication by \(r^2\) in \(L^\infty\) by \(C|\tau-\widetilde\tau|^{p-1}\). Applying \eqref{eq:weighted-multiplication} gives
\[
\left\|D_w[\Fop_\gamma(A_\tau+w)]
-D_w[\Fop_\gamma(A_{\widetilde\tau}+\widetilde w)]\right\|
 _{H^1_\delta\to H^{-1}_\delta}
\le
C\left(
\|w-\widetilde w\|_{H^1_\delta}^{p-1}
+|\tau-\widetilde\tau|^{p-1}
\right).
\]
Both partial derivatives are jointly continuous, so the map is \(C^1\). Combining the preceding two estimates for the partial derivative with respect to \(\tau\) with the estimate for the partial derivative with respect to \(w\) proves \eqref{eq:augmented-comparison} for every $w\in H^1_\delta$ and
$|\tau|<\varepsilon_0$.  Since
\[
D\cN(\tau,w)=\widehat\cL_{\gamma,V}
-D_{(\tau,w)}[\Fop_\gamma(A_\tau+w)],
\]
integrating along the segment between $(\tau,w)$ and
$(\widetilde\tau,\widetilde w)$ proves
\eqref{eq:nonlinear-remainder-bound}. In particular, $D\cN(0,0)=0$.

\step{Step 3. Compactly supported metric changes.}
Since \(\overline D\) is compact and disjoint from \(0\), the weighted \(H^1_\delta\) and \(H^1_{-\delta}\) norms restricted to \(D\) are equivalent to the ordinary \(H^1(D)\) norm, with constants depending on \(D\) and \(\delta\).
Integration by parts gives
\[
\|(L_{\gamma_1}-L_\gamma)w\|_{H^{-1}_\delta}
\le C\|\gamma_1-\gamma\|_{C^2(\overline D)}\|w\|_{H^1_\delta}.
\]

Also, $\Fop_{\gamma_1}(V)=-(L_{\gamma_1}-L_\gamma)V$ is supported
in $\overline D$. The smooth bounds for $V$ on $\overline D$ imply
$\|\Fop_{\gamma_1}(V)\|_{H^{-1}_\delta}
\le C\|\gamma_1-\gamma\|_{C^2(\overline D)}$, proving
\eqref{eq:compact-metric-variation}.
Since $\chi=0$ on a neighborhood of $\overline D$,
$(L_{\gamma_1}-L_\gamma)\psi_{T+\tau}=0$.
Thus the difference between the derivatives of
$(\tau,w)\mapsto\Fop_{\gamma_1}(A_\tau+w)$ and
$(\tau,w)\mapsto\Fop_\gamma(A_\tau+w)$ is
$(\dot\tau,\dot w)\mapsto-(L_{\gamma_1}-L_\gamma)\dot w$.
The outer boundary form agrees for both metrics, which are Euclidean
on the fixed boundary collar.
\end{proof}

We now define a finite relative energy and compute its first variation.
The outer Robin term enforces \eqref{eq:boundary-matching}.
For small $\varepsilon>0$, define
\[
\begin{split}
E_{\gamma,\varepsilon}(Y):={}&\frac12\int_{B_1\setminus B_\varepsilon}
 (|DY|_\gamma^2+c(n)\Scal_\gamma Y^2)\dd x
-\frac{(n-2)^2}{2}\int_{B_1\setminus B_\varepsilon}(Y_+)^{p+1}\dd x\\
&+\frac{n-2}{4}\int_{\partial B_1}Y^2\dd S.
\end{split}
\]

For the background in \eqref{eq:background}, put
\begin{equation}\label{eq:relative-energy}
\begin{split}
\cJ_{\gamma,V}(\tau,w):=\lim_{\varepsilon\downarrow0}\bigg\{
 &E_{\gamma,\varepsilon}(V+\Phi_{T,\tau}+w)-E_{\gamma,\varepsilon}(V)\\
 &-|S^{n-1}|\int_0^\tau
 \bigl[(v_*')^2+\frac{n-2}{2}v_*v_*'\bigr](-\log\varepsilon+T+\xi)\dd\xi
 \bigg\}.
\end{split}
\end{equation}

Integrating the radial cross term gives
$\frac{n-2}{4}|S^{n-1}|v_*^2(-\log\varepsilon+T)$
at the inner boundary. Its phase derivative accounts for the
$\frac{n-2}{2}v_*v_*'$ term in \eqref{eq:relative-energy}.

\begin{lemma}\label{lem:relative-energy}
Let
\[
\mathcal U_\delta
:=(-\varepsilon_0,\varepsilon_0)\times H^1_\delta
\subset\mathcal X_\delta.
\]
The limit in \eqref{eq:relative-energy} exists for every
$(\tau,w)\in\mathcal U_\delta$ and defines a finite real-valued
functional
\[
\cJ_{\gamma,V}\in C^2(\mathcal U_\delta;\mathbb R).
\]
For every $(\dot\tau,\dot w)\in\mathcal X_\delta$,
\begin{equation}\label{eq:relative-first-variation}
D\cJ_{\gamma,V}(\tau,w)[\dot\tau,\dot w]
=
\dot\tau\,\ip{\Fop_\gamma(A_\tau+w)}{\psi_{T+\tau}}
+
\ip{\Fop_\gamma(A_\tau+w)}{\dot w}.
\end{equation}
Here both brackets denote the duality between
$H^{-1}_\delta=(H^1_{-\delta})^*$ and $H^1_{-\delta}$,
using the continuous inclusion
$H^1_\delta\hookrightarrow H^1_{-\delta}$ for $\dot w$.
\end{lemma}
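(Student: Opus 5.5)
The plan is to split the regularized energy difference into a part that is already integrable at the puncture and a purely radial part, which I will compute explicitly. First take $w=0$. Put $G_\varepsilon(\tau):=E_{\gamma,\varepsilon}(A_\tau)-E_{\gamma,\varepsilon}(V)$ and differentiate in $\tau$; this is legitimate because $A_\tau$ is smooth on $\overline{B_1}\setminus B_\varepsilon$ and $\partial_\tau A_\tau=\psi_{T+\tau}$. Integrating by parts on $B_1\setminus B_\varepsilon$ gives
\[
\partial_\tau G_\varepsilon(\tau)=\int_{B_1\setminus B_\varepsilon}\Fop_\gamma(A_\tau)\psi_{T+\tau}\dd x
-\int_{\partial B_\varepsilon}(\partial_r A_\tau)\psi_{T+\tau}\dd S .
\]
No outer boundary term appears: $\psi$ vanishes near $r=1$, and the Robin term combined with the matching conditions \eqref{eq:boundary-matching} cancels, just as for $Q_\gamma$.

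Near $0$ we have $A_\tau=V_{T+\tau}+w_0$. The pure Fowler part of the inner boundary term is computed exactly: on $|x|=\varepsilon$,
\[
-\varepsilon^{n-1}\partial_r V_{T+\tau}\,\partial_TV_{T+\tau}
=\bigl[(v_*')^2+\tfrac{n-2}{2}v_*v_*'\bigr](-\log\varepsilon+T+\tau).
\]
After multiplying by $|S^{n-1}|$ and integrating in $\xi$ from $0$ to $\tau$, this is exactly the subtracted counterterm in \eqref{eq:relative-energy}. The remaining inner cross terms involve $w_0$, whose weighted bounds in $\cC^{1}_{\Theta;\delta}$ give contributions of size $O(\varepsilon^{\delta})$, uniformly in $|\tau|<\varepsilon_0$; these vanish as $\varepsilon\to0$.

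For the bulk term, Step 2 of Lemma~\ref{lem:nonlinear-map} gives $|\Fop_\gamma(A_\tau)|\le Cr^{-2}|w_0|$ near $0$. Together with $|\psi|\le C\Theta$ and $|w_0|\le Cr^\delta\Theta$, this makes the integrand $O(r^{-n+\delta})$, which is integrable. Hence $\partial_\tau G_\varepsilon$ minus the counterterm derivative converges uniformly in $\tau$ to $\ip{\Fop_\gamma(A_\tau)}{\psi_{T+\tau}}$. Since $G_\varepsilon(0)=0$, integrating in $\tau$ shows that the limit exists at $w=0$ and has the right $\tau$-derivative there.

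Next add $w\in H^1_\delta$. For $w$ smooth and vanishing near $0$, the difference $E_{\gamma,\varepsilon}(A_\tau+w)-E_{\gamma,\varepsilon}(A_\tau)$ is independent of small $\varepsilon$. Expanding it, it equals $\ip{\Fop_\gamma(A_\tau)}{w}+\frac12Q_\gamma(w,w)$ minus $\frac{(n-2)^2}{2}\int\bigl[(A_\tau+w)_+^{p+1}-A_\tau^{p+1}-(p+1)A_\tau^pw\bigr]$; the linear pieces collapse into the pairing with $\Fop_\gamma(A_\tau)$ via \eqref{eq:weak-robin-form}. Each term is continuous on $H^1_\delta$, since $H^1_\delta\hookrightarrow H^1_{\rm ev}$ and $\Fop_\gamma(A_\tau)\in H^{-1}_\delta$. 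The nonlinear bracket is bounded by $C(A_\tau^{p-1}w^2+|w|^{p+1})$, which is integrable by Hardy's inequality and the weighted Sobolev estimate of Step 1 of Lemma~\ref{lem:nonlinear-map}. For general $w\in H^1_\delta$, where $w$ need not vanish near $0$, one has to check that the boundary terms on $\partial B_\varepsilon$ vanish as $\varepsilon\to0$. I would handle this by choosing a sequence $\varepsilon_k\to0$ along which $\varepsilon^{n-1}\int_{S^{n-1}}(|Dw|^2+\varepsilon^{-2}w^2)\,\varepsilon^{-2\delta}$ tends to $0$; such a sequence exists because the weighted norm is finite. Controlling the limit over all $\varepsilon$, rather than only along a sequence, is where I expect most of the bookkeeping. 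The way out is to observe that every $w$-dependent term is already an absolutely convergent integral over $B_1\setminus B_\varepsilon$, so that its limit exists by dominated convergence, with no boundary terms at all. Thus $\cJ_{\gamma,V}(\tau,w)=\cJ_{\gamma,V}(\tau,0)+(\text{the explicit absolutely convergent $w$-terms})$ is well defined.

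Finally, I differentiate this explicit formula. The $w$-derivative gives $\ip{\Fop_\gamma(A_\tau+w)}{\dot w}$. The $\tau$-derivative of the $w$-terms, added to $\ip{\Fop_\gamma(A_\tau)}{\psi_{T+\tau}}$, gives $\ip{\Fop_\gamma(A_\tau+w)}{\psi_{T+\tau}}$; here I justify the exchange of limits using dominated convergence and the bound $Cr^{-2}|w|$ from Step 2 of Lemma~\ref{lem:nonlinear-map}. Together these prove \eqref{eq:relative-first-variation}. The differential is $C^1$ because, by Lemma~\ref{lem:nonlinear-map}, $(\tau,w)\mapsto\Fop_\gamma(A_\tau+w)$ is $C^1$ into $H^{-1}_\delta$, and $\tau\mapsto\psi_{T+\tau}$ is smooth into $H^1_{-\delta}$. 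Hence $\cJ_{\gamma,V}\in C^2$.
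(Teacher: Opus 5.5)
Your argument is correct, and it is organized differently from the paper's.

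The paper proves existence of the limit directly. It computes the radial energy density of the pure profile,
$\frac12(v_*'+\frac{n-2}{2}v_*)^2-\frac{(n-2)^2}{2}v_*^{p+1}=(v_*')^2+E_*+\frac{n-2}{2}v_*v_*'$,
and observes that the subtracted phase integral is exactly the phase change of this divergent truncated integral; the constant $E_*$ cancels. Every remaining term contains $w$ or $w_0$, and the paper bounds these by the weighted Cauchy--Schwarz estimate $\int_{B_1}(r^{-n/2}|Dw|+r^{-(n+2)/2}|w|)\dd x\le C\|w\|_{H^1_\delta}$. It then proves \eqref{eq:relative-first-variation} only for smooth $w,\dot w$ vanishing near $0$; this is your inner-flux computation. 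It reaches general $(w,\dot w)$ by approximation, passing to the limit in $\cJ_{\gamma,V}(\tau+h\dot\tau,w+h\dot w)-\cJ_{\gamma,V}(\tau,w)=\int_0^h\ip{\Fop_\gamma(Y_j(s))}{Y_j'(s)}\dd s$ with the uniform bounds of Lemma~\ref{lem:nonlinear-map}.

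You instead turn the flux cancellation into existence at $w=0$ via the fundamental theorem of calculus. You then expand exactly about $A_\tau$, which gives the closed formula
\begin{align*}
\cJ_{\gamma,V}(\tau,w)={}&\int_0^\tau\ip{\Fop_\gamma(A_s)}{\psi_{T+s}}\dd s+\ip{\Fop_\gamma(A_\tau)}{w}+\tfrac12Q_\gamma(w,w)\\
&-\tfrac{(n-2)^2}{2}\int_X\bigl[(A_\tau+w)_+^{p+1}-A_\tau^{p+1}-(p+1)A_\tau^pw\bigr]\dd x.
\end{align*}
When you differentiate it, you pair $\partial_\tau\Fop_\gamma(A_\tau)=\cL_{\gamma,A_\tau}\psi_{T+\tau}\in H^{-1}_\delta$ symmetrically against $w$. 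The $A_\tau^{p-1}w\psi_{T+\tau}$ terms then cancel because $n(n-2)p=n(n+2)$, and \eqref{eq:relative-first-variation} follows.

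What each route buys: yours gives an explicit representation of $\cJ_{\gamma,V}$ and avoids approximating the variations. The paper's route shows more transparently why the counterterm is the correct renormalization, and it treats the $\tau$ and $w$ directions uniformly.

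The detour through a sequence $\varepsilon_k$ is unnecessary. Two points should be made explicit:
\begin{itemize}
\item The absolute convergence you invoke for general $w\in H^1_\delta$ is exactly the weighted Cauchy--Schwarz bound above. This is where $\delta>0$ enters: the cross term $\int\ip{DA_\tau}{Dw}_\gamma\dd x$ need not converge absolutely for $w$ merely in $H^1_{\mathrm{ev}}$.
\item The closed formula passes from smooth $w$ vanishing near $0$ to all of $H^1_\delta$ because both sides are continuous in $H^1_\delta$.
\end{itemize}
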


\begin{proof}
\step{Step 1. The pure profile and the remainder.}
For $V_T=r^{-(n-2)/2}v_*(-\log r+T)$, polar integration with
$t=-\log r$ gives the radial energy density
\[
\frac12(v_*'+\frac{n-2}{2}v_*)^2-\frac{(n-2)^2}{2}v_*^{p+1}
=(v_*')^2+E_*+\frac{n-2}{2}v_*v_*'.
\]

On an interval ending at $t=-\log\varepsilon$, the change in
the integral of this energy density under a phase shift equals
the phase integral subtracted in \eqref{eq:relative-energy},
minus the same phase integral evaluated at the fixed lower
endpoint in $t$. The constant $E_*$ cancels.

On the complement of a fixed neighborhood of $0$, the energy terms,
including those involving derivatives of $\chi$, are finite and
continuous in $(\tau,w)$ by the Sobolev embedding.
Near $0$, every term left after subtracting the pure Fowler contribution
contains $w$ or $w_0$. Weighted Cauchy--Schwarz gives
\[
\int_{B_1}\bigl(r^{-n/2}|Dw|+r^{-(n+2)/2}|w|\bigr)\dd x
\le C\|w\|_{H^1_\delta},
\]
because the complementary radial integral is
$\int_0^1r^{-1+2\delta}\dd r<\infty$.
The quadratic and nonlinear remainder terms are bounded by
\[
C\int_{B_1}\bigl(|Dw|^2+r^{-2}w^2+|w|^{p+1}\bigr)\dd x
\le C\bigl(\|w\|_{H^1_\delta}^2+
                   \|w\|_{H^1_\delta}^{p+1}\bigr).
\]

The outer boundary term is continuous by the trace inequality.
Apply these bounds to $w_0$ after multiplying it by a smooth cutoff
supported near $0$; the complementary terms lie in a fixed compact
subset of $X$. The resulting estimates prove existence of the limit.
Applying the inequalities to differences proves continuity in $(\tau,w)$.

\step{Step 2. First variation for smooth corrections.}
First take $w,\dot w$ smooth, vanishing near $0$, and satisfying
both matching conditions. Put $Y=V+\Phi_{T,\tau}+w$ and
$\dot Y=\dot\tau\psi_{T+\tau}+\dot w$.
At $r=1$ the boundary term from integration
by parts is
$\int_{\partial B_1}(\partial_r+\frac{n-2}{2})Y\,\dot Y\dd S=0$
by antipodal parity. At the inner sphere the outward normal of the
annulus is $-\partial_r$. Since
\[
-\partial_r V_{T+\tau}
=r^{-n/2}(\frac{n-2}{2}v_*+v_*')(-\log r+T+\tau),
\]
its pairing with $\dot\tau\,\partial_TV_{T+\tau}$ is
\[
|S^{n-1}|\bigl[(v_*')^2+\frac{n-2}{2}v_*v_*'\bigr]
        (-\log\varepsilon+T+\tau)\dot\tau
+O(\varepsilon^\delta)|\dot\tau|.
\]

The remainder estimate follows from
$w_0\in\cC^{1,\alpha}_{\Theta;\delta}$.
Differentiating the subtracted term in \eqref{eq:relative-energy}
cancels this boundary contribution. Letting $\varepsilon\downarrow0$
proves \eqref{eq:relative-first-variation} for these variations.

\step{Step 3. Weak variations.}
Choose smooth matching functions $w_j,\dot w_j$ converging to
$w,\dot w$ in $H^1_\delta$, and put
$Y_j(s)=V+\Phi_{T,\tau+s\dot\tau}+w_j+s\dot w_j$.
Let $Y(s)$ denote the same expression with $w,\dot w$.
For $|h|$ small, the estimates in Lemma~\ref{lem:nonlinear-map} give uniform convergence
of the residuals in $H^{-1}_\delta$ and of the variations in
$H^1_{-\delta}$ along the segment from $0$ to $h$. Thus
\[
\begin{aligned}
\cJ_{\gamma,V}(\tau+h\dot\tau,w+h\dot w)-\cJ_{\gamma,V}(\tau,w)
=\lim_{j\to\infty}\int_0^h
\ip{\Fop_\gamma(Y_j(s))}{Y_j'(s)}\dd s
=\int_0^h\ip{\Fop_\gamma(Y(s))}{Y'(s)}\dd s.
\end{aligned}
\]
Dividing by $h$ and letting $h\to0$ proves
\eqref{eq:relative-first-variation}. The residual is $C^1$, and
$\psi_{T+\tau}$ depends smoothly on $\tau$ in $H^1_{-\delta}$.
Hence $\cJ_{\gamma,V}$ is $C^2$.
\end{proof}

The relative energy is also consistent with a change of cutoff.
Displaying the cutoff temporarily as a superscript and using
$\widetilde w$ from \eqref{eq:cutoff-nonlinear-change}, we have
\[
\cJ^{\widetilde\chi}_{\gamma,V}(\tau,\widetilde w)
=\cJ^\chi_{\gamma,V}(\tau,w).
\]
Indeed the represented function, reference background, and asymptotic
phase are identical, so both the truncated energies and the subtracted
term in \eqref{eq:relative-energy} agree.

\section{Metric perturbation and the projected equation}\label{sec:metric-insertion}

In this section, we continue the background under a local metric
perturbation and solve the projected equation after adding a bubble.
Lemmas~\ref{lem:background-continuation} and \ref{lem:projected-inverse}
provide the required inverses, and Proposition~\ref{prop:projected-solution}
constructs the correction and its parameter derivatives.
Fix an admissible background $(\gamma,V)$ with phase $T$ in
\eqref{eq:background}, and a ball $B_{4\rho}(a)$ as in
Proposition~\ref{prop:one-bubble}.

We define the tensor used in the localized metric perturbation and
the Euclidean bubble to be inserted in a conformally flat ball. We use the tensor
identities and metric variations of Brendle--Marques
\cite{BrendleMarques2009}.

Choose a nonzero algebraic Weyl tensor $W$ on $\R^n$ and put
\begin{equation*}
H_{ik}(x):=\sum_{r,s=1}^n W_{irks}x_rx_s,\qquad
f(s):=80-50s+7s^2,\qquad
\cH_{ik}(x):=f(|x|^2)H_{ik}(x).
\end{equation*}
The Weyl tensor at a point of a nonconformally flat
product of two space forms is one such choice of $W$. The Weyl symmetries imply
\begin{equation*}
\sum_{i=1}^nH_{ii}=0,\qquad
\sum_{i=1}^nx_iH_{ik}=0,\qquad
\sum_{i=1}^n\partial_iH_{ik}=0,
\quad1\le k\le n.
\end{equation*}
If $\psi=\psi(|x|^2)$ is any smooth radial scalar, then
\begin{equation}\label{eq:radial-gauge}
\begin{aligned}
\operatorname{tr}(\psi H)&=0,&
\sum_{i=1}^n x_i(\psi H)_{ik}&=0,\quad 1\le k\le n,\\
\sum_{i=1}^n\partial_i(\psi H)_{ik}&=0,\quad 1\le k\le n,\qquad &
\sum_{i,k=1}^n\partial_i\partial_k(\psi H)_{ik}&=0.
\end{aligned}
\end{equation}
The same formulas hold after multiplication by a radial cutoff.

For $(\lambda,b)\in(0,\infty)\times\R^n$, define
\[
U_{\lambda,b}(y)
:=\Bigl(\frac{\lambda}{\lambda^2+|y-b|^2}\Bigr)^{(n-2)/2}.
\]
Then $-\Delta U_{\lambda,b}=n(n-2)U_{\lambda,b}^p$.
The Jacobi fields and their span are
\[
(J_{\lambda,b})_{0}:=\lambda\partial_\lambda U_{\lambda,b},\qquad
(J_{\lambda,b})_{i}:=\partial_{b_i}U_{\lambda,b}\quad 1\le i\le n,
\]
and $\cK_{\lambda,b}:=\operatorname{span}\{(J_{\lambda,b})_{0},\ldots,(J_{\lambda,b})_{n}\}$.
By the Euclidean kernel theorem of Bianchi--Egnell
\cite[Lemma~A1]{BianchiEgnell1991}, $\cK_{\lambda,b}$ is the kernel of
$-\Delta-n(n+2)U_{\lambda,b}^{p-1}$ in
$\dot H^1(\R^n)$, the closure of $C_c^\infty(\R^n)$ under the
Dirichlet norm $\|D\,\cdot\|_{L^2}$.
Orthogonality in $\dot H^1$ always refers to the Dirichlet scalar product
$(\phi,\psi)_{\dot H^1}=\int_{\R^n}D\phi\cdot D\psi\dd y$.
The Sobolev inequality and its dual give
\[
\dot H^1(\R^n)\hookrightarrow L^{p+1}(\R^n),
\qquad
L^{2n/(n+2)}(\R^n)\hookrightarrow\dot H^{-1}(\R^n).
\]

Fix a smooth radial cutoff $\eta:[0,\infty)\to[0,1]$ with
$\eta=1$ on $[0,1/2]$ and $\eta=0$ on $[3/4,\infty)$.
To describe the part of the bubble residual that is linear
in the metric perturbation, set
\[
S_{\lambda,b}:=\sum_{i,k=1}^n\cH_{ik}\partial_{ik}U_{\lambda,b}.
\]
The following identities control the energy of the explicit bubble
and the size of its projected correction.

\begin{lemma}\label{lem:linear-cancellation}
For every $\psi\in C_c^\infty([0,\infty))$ and every
$(\lambda,b)\in(0,\infty)\times\mathbb R^n$,
\begin{equation}\label{eq:linear-energy-zero}
\sum_{i,k=1}^n\int_{\R^n}\psi(|y|^2)H_{ik}
\partial_iU_{\lambda,b}\partial_kU_{\lambda,b}\dd y=0.
\end{equation}
The first variation of scalar curvature in $\psi H$ is zero. Moreover,
\begin{equation}\label{eq:center-source}
S_{\lambda,b}(y)=\frac{n(n-2)\lambda^{(n-2)/2}}{(\lambda^2+|y-b|^2)^{(n+2)/2}}{\sum_{i,k=1}^n\cH_{ik}(y)b_ib_k}.
\end{equation}
On each compact subset of $(0,\infty)\times\R^n$, uniformly in the parameters $(\lambda,b)$,
\begin{equation}\label{eq:center-source-bounds}
\begin{aligned}
|S_{\lambda,b}(y)|&\le C|b|^2(1+|y|)^{4-n},\\
|D_{\lambda,b}S_{\lambda,b}(y)|&\le C|b|(1+|y|)^{4-n},\\
|D_{\lambda,b}^2S_{\lambda,b}(y)|&\le C(1+|y|)^{4-n}.
\end{aligned}
\end{equation}
The same estimates hold after multiplication by $\eta(|y|/R)$,
uniformly for $R\ge1$.
\end{lemma}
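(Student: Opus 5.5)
The plan is to reduce all three assertions to one pointwise identity. Since the weight is radial and $H$ satisfies \eqref{eq:radial-gauge}, $H$ is blind to the radial part of $y-b$. Set $\Lambda:=\lambda^2+|y-b|^2$. Direct differentiation gives
\[
\partial_iU_{\lambda,b}=-(n-2)\lambda^{(n-2)/2}(y-b)_i\Lambda^{-n/2},
\]
\[
\partial_{ik}U_{\lambda,b}=-(n-2)\lambda^{(n-2)/2}\bigl[\delta_{ik}\Lambda^{-n/2}-n(y-b)_i(y-b)_k\Lambda^{-(n+2)/2}\bigr].
\]
The relation $\sum_i y_iH_{ik}(y)=0$ and the symmetry of $H$ yield
\[
\sum_{i,k}H_{ik}(y)(y-b)_i(y-b)_k=\sum_{i,k}H_{ik}(y)b_ib_k=W(b,y,b,y).
\]
Using this together with $\operatorname{tr}H=0$ to kill the $\delta_{ik}$ term in $\partial_{ik}U_{\lambda,b}$, and inserting the factor $f(|y|^2)$, gives \eqref{eq:center-source} directly.

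For \eqref{eq:linear-energy-zero}, the same identity reduces the integral to
\[
(n-2)^2\lambda^{n-2}\int_{\R^n}\psi(|y|^2)\,\Lambda^{-n}\,W(b,y,b,y)\dd y .
\]
Assume $b\neq0$; the case $b=0$ is trivial. Write $y=s\,b/|b|+z$ with $z\perp b$. The antisymmetry $W_{irks}=-W_{riks}$ gives $W(b,b,\cdot,\cdot)=0$, and similarly in the last pair, so $W(b,y,b,y)=W(b,z,b,z)$. The weight $\psi(s^2+|z|^2)\,(\lambda^2+(s-|b|)^2+|z|^2)^{-n}$ depends only on $s$ and $|z|$. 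Averaging the quadratic form $z\mapsto W(b,z,b,z)$ over spheres in $b^\perp$ therefore produces a multiple of $\sum_{e\perp b}W(b,e,b,e)$, with the sum over an orthonormal basis of $b^\perp$. This sum equals the full trace $\sum_r W_{irkr}b_ib_k$, because the $b$-direction contributes zero. The full trace vanishes since the Weyl tensor is totally trace-free. Integrability is clear because $\psi$ has compact support.

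The statement that the first variation of scalar curvature vanishes follows from the standard formula
\[
\dot{\Scal}=\sum_{i,k}\partial_i\partial_k h_{ik}-\Delta\operatorname{tr}h
\]
at the flat metric, applied to $h=\psi H$. Both terms vanish by \eqref{eq:radial-gauge}.

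For \eqref{eq:center-source-bounds}, restrict the parameters to a compact set, so that $\lambda\in[\lambda_0,\lambda_0^{-1}]$ and $|b|\le C$. Then $\Lambda\asymp(1+|y|)^2$ uniformly. In addition, $|\cH_{ik}(y)|\le C(1+|y|)^6$, since $f(|y|^2)$ is quadratic in $|y|^2$ and $H$ is quadratic in $y$. With the factor $|b|^2$ this gives $|S_{\lambda,b}|\le C|b|^2(1+|y|)^{6-n-2}$, which is the first bound. The parameter derivatives are handled term by term. Differentiating $\Lambda^{-(n+2)/2}$ or $\lambda^{(n-2)/2}$ in $(\lambda,b)$ keeps the decay order, with uniform constants on the compact set. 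Differentiating the factor $b_ib_k$ once costs one power of $|b|$, and differentiating it twice removes that factor. This yields the second and third bounds. The cutoff $\eta(|y|/R)$ does not depend on the parameters and takes values in $[0,1]$, so all three bounds persist uniformly for $R\ge1$.

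The only genuinely nontrivial step is the averaging argument for \eqref{eq:linear-energy-zero}, specifically the reduction to the trace over $b^\perp$. I expect it to be short once the decomposition $y=s\,b/|b|+z$ is in place.
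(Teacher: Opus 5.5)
Your proof is correct. For \eqref{eq:center-source}, the bounds \eqref{eq:center-source-bounds}, and the scalar curvature statement, you follow the paper's argument. That argument uses the Hessian of the bubble, the trace and transversality identities in \eqref{eq:radial-gauge}, $|\cH(y)|\le C(1+|y|)^6$, a parameter-independent cutoff, and the standard linearization of scalar curvature at the flat metric.

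Your route differs for \eqref{eq:linear-energy-zero}.

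\emph{The paper's argument.} It never evaluates the integrand. It uses the pointwise identity that $\partial_iU_{\lambda,b}\partial_kU_{\lambda,b}-c(n)\partial_{ik}(U_{\lambda,b}^2)$ is pure trace; explicitly it equals $\frac{(n-2)^2}{2(n-1)}\lambda^{n-2}\Lambda^{1-n}\delta_{ik}$ with $\Lambda=\lambda^2+|y-b|^2$. Contracting with the trace-free tensor $\psi H$ turns the integral into $c(n)\int(\psi H)_{ik}\partial_{ik}(U_{\lambda,b}^2)\dd y$. Two integrations by parts then move both derivatives onto $\psi H$, whose double divergence vanishes.

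\emph{Your argument.} You use transversality to collapse the integrand to $(n-2)^2\lambda^{n-2}\psi\Lambda^{-n}W(b,y,b,y)$. You then kill it using the antisymmetries of $W$, the axial symmetry of the weight about $b$, and the trace-freeness of $W$ on $b^\perp$.

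\emph{Comparison.} The paper's argument is structural. For the bubble, it applies verbatim to any trace-free symmetric tensor with vanishing double divergence, and does not use the explicit form $H_{ik}=W_{irks}y_ry_s$. Yours is more hands-on and depends on the specific Weyl structure. In exchange it needs no integration by parts: compact support of $\psi$ is used only for absolute convergence. It also shows that the linear energy density is pointwise $O(|b|^2)$. That density is governed by the same quadratic form $\sum_{i,k}H_{ik}(y)b_ib_k$ that appears in $S_{\lambda,b}$.
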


\begin{proof}

\step{Step 1. The linear energy term.}
Differentiating the bubble directly, we obtain, for $1\le i,k\le n$,
\[
\partial_iU_{\lambda,b}\partial_kU_{\lambda,b}
-c(n)\partial_{ik}(U_{\lambda,b}^2)
=\frac{\delta_{ik}}n
\bigl(|DU_{\lambda,b}|^2-c(n)\Delta(U_{\lambda,b}^2)\bigr).
\]
Contracting with $\psi H$, using its zero trace, and integrating by parts
twice, we obtain
\[
\sum_{i,k=1}^n\int_{\R^n}(\psi H)_{ik}
\partial_{y_i}U_{\lambda,b}\partial_{y_k}U_{\lambda,b}\dd y
=c(n)\sum_{i,k=1}^n\int_{\R^n}
\partial_{y_i}\partial_{y_k}(\psi H)_{ik}\,U_{\lambda,b}^2\dd y=0.
\]
The last equality follows from \eqref{eq:radial-gauge}. The trace and divergence identities in \eqref{eq:radial-gauge}
also imply that the first variation of scalar curvature
in the direction $\psi H$ vanishes.

\step{Step 2. The residual and parameter derivatives.}
The Hessian of the bubble is
\[
\partial_{ik}U_{\lambda,b}
=-(n-2)\lambda^{(n-2)/2}
\frac{\delta_{ik}}{(\lambda^2+|y-b|^2)^{n/2}}+n(n-2)\lambda^{(n-2)/2}
\frac{(y_i-b_i)(y_k-b_k)}
{(\lambda^2+|y-b|^2)^{(n+2)/2}}.
\]
The trace and radial identities in \eqref{eq:radial-gauge} remove the
first term and every factor $y_i$ or $y_k$ in the second term.
We thus obtain \eqref{eq:center-source}. Differentiating this formula
at most twice and using $|\cH(y)|\le C(1+|y|)^6$, we deduce
\eqref{eq:center-source-bounds}. The cutoff is independent of the parameters $(\lambda,b)$.
\end{proof}

We now perturb the metric in the insertion ball and continue the background solution.
The fixed background satisfies $\gamma=\gamma_0$ on $B_{4\rho}(a)$.
The coordinates $x$ remain centered at the puncture; the bubble
rescaling is $x=a+\mu y$. Choose the phase cutoff
to vanish on a neighborhood of $\overline{B_{4\rho}(a)}$.
For example, take $\chi(r)=\chi_*(r/\varepsilon)$ with
$2\varepsilon<|a|-4\rho$, using the fixed smooth function in
\eqref{eq:stage-cutoff}. Equation~\eqref{eq:cutoff-inverse-bound}
preserves the isomorphism supplied by admissibility. Fix this cutoff and
its resulting inverse bound before varying $\mu$; all estimates in
this insertion use this same cutoff, independent of $(\mu,\lambda,b)$.
With the tensor profile just defined, set
\begin{equation}\label{eq:metric-profile}
\begin{aligned}
(h_\mu)_{ik}(x)&=\mu^{19/4}\eta(|x-a|/\rho)
                    f(\mu^{-2}|x-a|^2)H_{ik}(x-a),\\
h_\mu(a+\mu y)&=\mu^{27/4}\eta(|y|/R)\cH(y),
\qquad R=\rho/\mu.
\end{aligned}
\end{equation}
Define $\gamma_\mu=e^{h_\mu}$ in $B_\rho(a)$ and
$\gamma_\mu=\gamma$ elsewhere. The zero trace condition gives
$\dd V_{\gamma_\mu}=\dd x$. The metric remains Euclidean near $0$
and on the unchanged boundary collar. For every fixed $k\ge0$,
\begin{equation}\label{eq:metric-small}
\|\gamma_\mu-\gamma\|_{C^k(\overline{B_1})}
\le C_{k,a,\rho}\mu^{3/4}\to0.
\end{equation}
In the fixed coordinates $x$, the terms of degrees two,
four, and six in $h_\mu$ have coefficients proportional to
$\mu^{19/4}$, $\mu^{11/4}$, and $\mu^{3/4}$, respectively.
This gives \eqref{eq:metric-small}.
After the rescaling $x=a+\mu y$, the prefactor in
\eqref{eq:metric-profile} is $\mu^{27/4}$.
Lemma~\ref{lem:linear-cancellation} eliminates the linear
metric contribution, so the leading quadratic contribution
to the bubble energy has order $\mu^{27/2}$.
By Lemma~\ref{lem:pure-energy}, the interaction with the
background is $O(\mu^{(n-2)/2})=o(\mu^{27/2})$ for $n\ge30$.
Proposition~\ref{prop:local-well} supplies a nondegenerate
minimum of the leading coefficient in this dimension range.

\begin{lemma}\label{lem:background-continuation}
For the admissible background $(\gamma,V)$ fixed in this section
and the metric $\gamma_\mu$ constructed above,
there are, for all sufficiently small $\mu>0$, a positive solution
$V_\mu$ and a phase $T_\mu$ such that
\[
V_\mu=V+\Phi_{T,T_\mu-T}+w_\mu,
\qquad |T_\mu-T|+\|w_\mu\|_{H^1_\delta}\to 0
\quad\text{as }\mu\downarrow0.
\]
The pair $(\gamma_\mu,V_\mu)$ satisfies \eqref{eq:background}
with phase $T_\mu$, and $V_\mu$ satisfies
\eqref{eq:boundary-matching}. For every fixed integer $k\ge0$,
\begin{equation}\label{eq:background-smooth}
V_\mu\to V\quad\text{in }\cC^k_{\Theta;0}(X)
\quad\text{as }\mu\downarrow0.
\end{equation}
The linearization $\widehat\cL_{\gamma_\mu,V_\mu}$ is an isomorphism
whose inverse norm is bounded independently of small $\mu$.
\end{lemma}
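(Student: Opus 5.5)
We obtain $V_\mu$ from the contraction mapping principle in the augmented variables $(\tau,w)\in\cX_\delta$, with $\widehat\cL_{\gamma,V}^{-1}$ as a fixed approximate inverse. The cutoff $\chi$ has been chosen to vanish on a neighborhood of $\overline{B_{4\rho}(a)}$, so Lemma~\ref{lem:nonlinear-map} applies with $D=B_\rho(a)$ (or a slightly larger ball with closure in $B_{4\rho}(a)\setminus\{0\}$). We seek $(\tau,w)$ with $\Fop_{\gamma_\mu}(A_\tau+w)=0$, and rewrite this as the fixed-point problem
\[
(\tau,w)=\widehat\cL_{\gamma,V}^{-1}\Bigl[\widehat\cL_{\gamma,V}(\tau,w)-\Fop_{\gamma_\mu}(A_\tau+w)\Bigr]=:\mathcal T_\mu(\tau,w).
\]
The bracket equals $\cN(\tau,w)-\Fop_{\gamma_\mu}(V)-(L_{\gamma_\mu}-L_\gamma)w$, because $(L_{\gamma_\mu}-L_\gamma)\Phi_{T,\tau}=0$ (the cutoff vanishes on the support of the metric change). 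By \eqref{eq:compact-metric-variation} and \eqref{eq:metric-small} with $k=2$, the terms coming from the metric change have size $C\mu^{3/4}(1+\|w\|_{H^1_\delta})$ in $H^{-1}_\delta$. By \eqref{eq:nonlinear-remainder-bound}, $\cN$ has Lipschitz constant $C(\varepsilon+\varepsilon^{p-1})$ on the $\varepsilon$-ball of $\cX_\delta$, and $\cN(0,0)=0$. Take $\varepsilon=\varepsilon(\mu)\to0$ slowly, say $\varepsilon=\mu^{1/2}$. Then $\mathcal T_\mu$ maps the $\varepsilon$-ball to itself and is a contraction for small $\mu$. This gives a unique $(\tau_\mu,w_\mu)$ with $\|(\tau_\mu,w_\mu)\|_{\cX_\delta}\le C\mu^{3/4}$. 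Put $T_\mu=T+\tau_\mu$ and $V_\mu=A_{\tau_\mu}+w_\mu$.

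Next we establish positivity, the boundary behaviour, and the structure \eqref{eq:background}. The weak equation holds against test functions with even trace, so $V_\mu$ satisfies \eqref{eq:boundary-matching} as the natural boundary condition of $Q_{\gamma_\mu}$. The truncation $(\cdot)_+^p$ is removed once positivity is known. We argue for positivity by testing with $(V_\mu)_-$: the coercivity of Lemma~\ref{lem:ground-state}, or simply the smallness of $w_\mu$ relative to $A_{\tau_\mu}\asymp\Theta$ after the regularity step, gives $V_\mu>0$. For the regularity statement we use the weighted elliptic regularity of Subsection~\ref{app:regularity}. On dyadic annuli near $0$ the metric is Euclidean, and $w_\mu$ solves $-\Delta w_\mu-n(n-2)[(V_{T_\mu}+w_0+w_\mu)^p-V_{T_\mu}^p]=0$. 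Rescaling each annulus to unit size, $H^1_\delta$ smallness becomes local $L^2$ smallness of the rescaled function times $\rho^{\delta}$. Interior Schauder bootstrapping then yields $w_0+w_\mu\in\cC^{k,\alpha}_{\Theta;\delta}$ for all $k$, with $\|w_\mu\|_{\cC^{k,\alpha}_{\Theta;\delta}}\to0$ near $0$. Away from $0$ we use ordinary elliptic regularity, and across $r=1$ the Kelvin extension. Combining these with $|\tau_\mu|\to0$ and $\|\Phi_{T,\tau}\|_{\cC^k_{\Theta;0}}\le C|\tau|$ gives \eqref{eq:background-smooth}.

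Invertibility follows from perturbation: $\widehat\cL_{\gamma_\mu,V_\mu}$ is close to $\widehat\cL_{\gamma,V}$. There is one subtlety. $\widehat\cL_{\gamma_\mu,V_\mu}$ is defined with $\psi_{T_\mu}$ rather than $\psi_T$, and with the reference splitting $V_\mu=V_{T_\mu}+(w_0+w_\mu)$ near $0$. By the last statement of Lemma~\ref{lem:nonlinear-map}, the derivative of $(\tau,w)\mapsto\Fop_{\gamma_\mu}(A_\tau+w)$ at $(\tau_\mu,w_\mu)$ is
\[
(\dot\tau,\dot w)\mapsto\cL_{\gamma_\mu,V_\mu}(\dot\tau\psi_{T_\mu}+\dot w),
\]
which is exactly $\widehat\cL_{\gamma_\mu,V_\mu}$. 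This derivative differs from $\widehat\cL_{\gamma,V}$ in operator norm by $C(|\tau_\mu|+\|w_\mu\|+\|w_\mu\|^{p-1}+\mu^{3/4})\to0$. A Neumann series then gives invertibility with the inverse bounded by $2\|\widehat\cL_{\gamma,V}^{-1}\|$ for small $\mu$.

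We expect the main obstacle to be the regularity step that upgrades $w_\mu\in H^1_\delta$ to the weighted Hölder class $\cC^{k,\alpha}_{\Theta;\delta}$ with small norm near the puncture. The contraction itself is routine given Lemma~\ref{lem:nonlinear-map}. For the regularity step we would first try the scale-invariant annulus argument sketched above. It requires uniform control of the nonlinearity in rescaled variables, which holds because all profiles are comparable to $\Theta$.
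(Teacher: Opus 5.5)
Your proposal is correct and follows the paper's proof. The paper obtains $(T_\mu,w_\mu)$ from Lemma~\ref{lem:nonlinear-map} via the implicit function theorem, which your contraction with the fixed inverse $\widehat\cL_{\gamma,V}^{-1}$ makes explicit; it gets positivity, regularity, the Fowler end and \eqref{eq:background-smooth} from Lemma~\ref{lem:regularity-end}, after noting that coercivity of $Q_\gamma$ persists for $\gamma_\mu$ (you need this transfer too); and it proves invertibility by the same Neumann series based on the derivative comparison in Lemma~\ref{lem:nonlinear-map}. Only minor slips need fixing: the term $(L_{\gamma_\mu}-L_\gamma)w$ enters your bracket with a plus sign, and the annulus equation you wrote is the one satisfied by $w_0+w_\mu$, not $w_\mu$; the smallness of $w_\mu$ itself comes from the difference equation with a phase source of size $O(|T_\mu-T|)$, as in Step~6 of the proof of Lemma~\ref{lem:regularity-end}.
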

\begin{proof}
\step{Step 1. Continuation and regularity.}
The residual $\Fop_{\gamma_\mu}(V)$ is supported in $B_\rho(a)$, and
\[
\|\Fop_{\gamma_\mu}(V)\|_{H^{-1}_\delta}
\le C\|\gamma_\mu-\gamma\|_{C^2(B_\rho(a))}\to0
\quad\text{as }\mu\downarrow0.
\]

Apply \eqref{eq:compact-metric-variation} with $D=B_\rho(a)$.
By Lemma~\ref{lem:nonlinear-map} and the implicit function theorem,
we obtain $T_\mu$ and $w_\mu$ with the stated convergence.
By Lemma~\ref{lem:ground-state} and the small metric perturbation,
$-L_{\gamma_\mu}$ remains coercive. Also,
\[
\inf_X\frac{V+\Phi_{T,T_\mu-T}}{\Theta}
\ge\inf_X\frac{V}{\Theta}-C|T_\mu-T|
\ge\tfrac12\inf_X\frac{V}{\Theta}>0
\]
for small $\mu$. Lemma~\ref{lem:regularity-end} therefore proves
positivity, smoothness, and a decaying remainder with phase
$T_\mu$. Its convergence assertion, applied to the small
correction and phase shift above, proves \eqref{eq:background-smooth}
for every fixed $k$.

\step{Step 2. Preservation of invertibility.}
Keep the spaces and the cutoff fixed as $\mu\downarrow0$.
The derivative of the
nonlinear map in Lemma~\ref{lem:nonlinear-map} at the continued solution is
\[
(\dot\tau,\dot w)\longmapsto
\cL_{\gamma_\mu,V_\mu}(\dot\tau\psi_{T_\mu}+\dot w).
\]
By Lemma~\ref{lem:nonlinear-map},
\[
\|\widehat\cL_{\gamma_\mu,V_\mu}
       -\widehat\cL_{\gamma,V}\|_{\cX_\delta\to H^{-1}_\delta}
\le C\bigl(\|\gamma_\mu-\gamma\|_{C^2(B_\rho(a))}
  +|T_\mu-T|+\|w_\mu\|_{H^1_\delta}
 +\|w_\mu\|_{H^1_\delta}^{p-1}\bigr)=o(1).
\]

The derivatives with respect to the phase are compared by
\eqref{eq:phase-column-cancellation}.
A Neumann series proves the asserted isomorphism and uniform inverse bound.
\end{proof}

We now estimate the residual and its first two bubble parameter
derivatives after adding a cutoff bubble to the continued background.
Let $s_n$ be the number supplied by
Proposition~\ref{prop:local-well}. Fix a compact neighborhood $K$
of $(\sqrt{s_n},0)$ in $(0,\infty)\times\mathbb R^n$.
We take parameter derivatives on an open neighborhood of $K$
whose closure is compact in this parameter space.
The notation $D_{\lambda,b}$ denotes ordinary derivatives with respect to
$(\lambda,b)$, with the spatial variable, insertion scale, background,
and cutoffs held fixed. We also write $\partial_{(\lambda,b)}^\beta$
for parameter derivatives of multiindex $\beta$.
For a $C^2$ function on this neighborhood, its $C^2(K)$ norm
includes the suprema on $K$ of the function and its derivatives
through order two.
The insertion point, radius, and background remain fixed.
Choose a smooth cutoff $0\le\zeta\le1$ equal to one on $B_\rho(a)$
and zero outside $B_{2\rho}(a)$. Define the bubble on $X$ by
\begin{equation}\label{eq:cutoff-bubble}
\widehat U_{\mu,\lambda,b}(x)
:=\zeta(x)
\left(\frac{\mu\lambda}{(\mu\lambda)^2+|x-a-\mu b|^2}\right)^{(n-2)/2},
\end{equation}
extended by zero. It has Euclidean scale $\mu\lambda$ and center
$a+\mu b$; $\lambda,b$ are the rescaled parameters. Put
\[
\begin{aligned}
\cR_{\mu,\lambda,b}&:=\Fop_{\gamma_\mu}
                    (V_\mu+\widehat U_{\mu,\lambda,b}),\\
\cL_{\mu,\lambda,b}&:=-L_{\gamma_\mu}
        -n(n+2)(V_\mu+\widehat U_{\mu,\lambda,b})^{p-1}.
\end{aligned}
\]

The residual is supported in $\overline{B_{2\rho}(a)}$. Write
\begin{equation}\label{eq:error-scale}
e_\mu:=\mu^{27/2}+\mu^{(n+2)/4}.
\end{equation}
\begin{lemma}\label{lem:residual}
There are $C,\mu_0>0$ such that the following estimates hold
for $0<\mu<\mu_0$ and $(\lambda,b)\in K$:
\begin{align}\label{eq:residual-bounds}
\|\cR_{\mu,\lambda,b}\|_{H^{-1}_\delta}
 &\le C\bigl(\mu^{27/4}|b|^2+e_\mu\bigr),\notag\\
\|D_{\lambda,b}\cR_{\mu,\lambda,b}\|_{H^{-1}_\delta}
 &\le C\bigl(\mu^{27/4}|b|+e_\mu\bigr),\\
\|D^2_{\lambda,b}\cR_{\mu,\lambda,b}\|_{H^{-1}_\delta}
 &\le C\bigl(\mu^{27/4}+e_\mu\bigr).\notag
\end{align}
In particular, $e_\mu=o(\mu^{27/4})$ for $n\ge30$.
\end{lemma}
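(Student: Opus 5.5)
We split the residual using the equation satisfied by the continued background. Since $-L_{\gamma_\mu}V_\mu=n(n-2)V_\mu^p$ by Lemma~\ref{lem:background-continuation}, we have
\[
\cR_{\mu,\lambda,b}
=-L_{\gamma_\mu}\widehat U_{\mu,\lambda,b}
-n(n-2)\bigl[(V_\mu+\widehat U)^p-V_\mu^p\bigr].
\]
We rewrite this as $\cR=\cR_1+\cR_2+\cR_3$, where:
\begin{itemize}
\item $\cR_1:=-(L_{\gamma_\mu}-\Delta)\widehat U$ is the metric error on $B_\rho(a)$; there $\gamma=\gamma_0$, so only $h_\mu$ contributes.
\item $\cR_2:=-\Delta\widehat U-n(n-2)\widehat U^p$ is the cutoff error on the annulus $B_{2\rho}(a)\setminus B_\rho(a)$.
\item $\cR_3:=-n(n-2)\bigl[(V_\mu+\widehat U)^p-V_\mu^p-\widehat U^p\bigr]$ is the interaction term.
\end{itemize}
Everything is supported in $\overline{B_{2\rho}(a)}$, a fixed compact set away from $0$ and from $\partial B_1$. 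There the $H^{-1}_\delta$ norm is dominated by the unweighted $\dot H^{-1}$ norm, hence by the $L^{2n/(n+2)}$ norm (Step 1 of Lemma~\ref{lem:nonlinear-map}). That norm is scale invariant under $x=a+\mu y$ for functions of the form $\mu^{-(n+2)/2}g(y)$. So we estimate each piece in rescaled variables and in $L^{2n/(n+2)}$. Derivatives in $(\lambda,b)$ are handled identically, because differentiating the bubble in its rescaled parameters preserves its decay profile, and $\zeta$, $\eta$ do not depend on $(\lambda,b)$.

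\textbf{Metric piece $\cR_1$.} Write $\gamma_\mu=e^{h_\mu}$ with $h_\mu(a+\mu y)=\mu^{27/4}\eta(|y|/R)\cH(y)$. Expand $-L_{\gamma_\mu}$ in $h_\mu$. By \eqref{eq:radial-gauge}, $h$ is trace free and divergence free, so the first-order part of $\Delta_{\gamma_\mu}-\Delta$ acting on $U$ is $-\sum_{ik}(h_\mu)_{ik}\partial_{ik}U$. By Lemma~\ref{lem:linear-cancellation}, the linear scalar-curvature term vanishes. In $y$-variables, the linear term is therefore $-\mu^{-(n+2)/2}\mu^{27/4}S_{\lambda,b}$, cut off by $\eta(|y|/R)$. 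Using \eqref{eq:center-source-bounds}, $|S|\le C|b|^2(1+|y|)^{4-n}$, which lies in $L^{2n/(n+2)}(\R^n)$ uniformly in $R$ since $(n-4)\tfrac{2n}{n+2}>n$ for $n\ge30$. This gives the contributions $\mu^{27/4}|b|^2$, $\mu^{27/4}|b|$, $\mu^{27/4}$ for zero, one and two parameter derivatives.

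The quadratic and higher terms in $h_\mu$ are pointwise bounded by
\[
C\mu^{27/2}(1+|y|)^{12}\bigl(|D^2U|+|D\,\partial h|\,|DU|+|\partial^2h|\,U\bigr)\lesssim\mu^{27/2}(1+|y|)^{12-n},
\]
together with scalar curvature terms of the same order. We need this in $L^{2n/(n+2)}$ on $|y|\le R$. If the integral diverges at large $|y|$, it grows at most like a power of $R=\rho/\mu$. We expect to absorb that growth either into $\mu^{27/2}$ or into the fact that only the region $|y|\lesssim R$ matters. This is the step I expect to need the most care: checking that $n\ge30$ makes $(n-12)\tfrac{2n}{n+2}>n$, so the integral converges and the contribution is $O(\mu^{27/2})\le Ce_\mu$.

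\textbf{Cutoff piece $\cR_2$.} On the annulus $\rho\le|x-a|\le2\rho$, the terms $D\zeta\cdot DU$, $U\Delta\zeta$ and $(\zeta-\zeta^p)U^p$ are all $O(\mu^{(n-2)/2})$ in $C^0$, with the same bound for parameter derivatives. Their $H^{-1}_\delta$ norm is thus $O(\mu^{(n-2)/2})\le C\mu^{(n+2)/4}$.

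\textbf{Interaction piece $\cR_3$.} The scalar inequality $|(A+B)^p-A^p-B^p|\le C(A^{p-1}B+AB^{p-1})$ for $A,B\ge0$ applies with $V_\mu\le C$ on $B_{2\rho}(a)$ (by \eqref{eq:background-smooth}, uniformly in $\mu$).
\begin{itemize}
\item The term $V_\mu^{p-1}\widehat U$: we have $\|\widehat U\|_{L^{2n/(n+2)}(B_{2\rho})}\sim\mu^{(n-2)/2}\bigl(\int_{|y|\le R}(1+|y|)^{-(n-2)\frac{2n}{n+2}}\mu^n\,dy\bigr)^{(n+2)/2n}$, which is $O(\mu^{2})$-type and small.
\item The dominant term $V_\mu\widehat U^{p-1}$: $\widehat U^{p-1}\sim\mu^{-2}(1+|y|)^{-4}$, and its $L^{2n/(n+2)}$ norm over $B_{2\rho}$ is $\mu^{-2}\mu^{(n+2)/2}\bigl(\int(1+|y|)^{-8n/(n+2)}\bigr)^{(n+2)/2n}$. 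For $n$ large this integral converges, giving $O(\mu^{(n-2)/2})$.
\end{itemize}
The stated $\mu^{(n+2)/4}$ is weaker than these bounds, so it suffices. Differentiating $(V_\mu+\widehat U)^p$ in the parameters gives the same types of terms, again bounded by $\mu^{(n+2)/4}$.

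Collecting the three pieces gives \eqref{eq:residual-bounds}. Finally, $e_\mu=o(\mu^{27/4})$ because $27/2>27/4$ and, for $n\ge30$, $(n+2)/4\ge8>27/4$.
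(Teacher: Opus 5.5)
Your metric and cutoff pieces agree with the paper. The linear source $\mu^{27/4}\eta(|y|/R)S_{\lambda,b}$ is controlled by \eqref{eq:center-source-bounds}. The remainder has majorant $C\mu^{27/2}(1+|y|)^{12-n}\mathbf 1_{|y|\le R}$, whose $L^{2n/(n+2)}$ norm is bounded exactly when $n>26$, so the step you flagged is fine. The cutoff error is $O(\mu^{(n-2)/2})$.

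\textbf{The gap is in the interaction piece $\cR_3$.} Bounding it by the sum $C(V_\mu^{p-1}\widehat U+V_\mu\widehat U^{p-1})$ and integrating each term over all of $B_{2\rho}(a)$ gives only $O(\mu^2)$.
\begin{itemize}
\item Your own computation gives $\|\widehat U\|_{L^{2n/(n+2)}}\approx\mu^2$, coming from the core $|x-a|\le C\mu$.
\item For the other term, $\int_{|y|\le R}(1+|y|)^{-8n/(n+2)}\,dy$ does not converge as $R\to\infty$ once $n>6$, since $8n/(n+2)<8<n$. It grows like $R^{\,n-8n/(n+2)}$. So $\|\widehat U^{p-1}\|_{L^{2n/(n+2)}}$ is again of order $\mu^2$, now coming from $|x-a|\approx\rho$, where $\widehat U^{p-1}\approx\mu^2\rho^{-4}$. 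It is not $O(\mu^{(n-2)/2})$.
\end{itemize}
Since $\mu<1$, $\mu^2$ is far larger than $\mu^{(n+2)/4}$ and than $\mu^{27/4}$. Your sentence ``the stated $\mu^{(n+2)/4}$ is weaker than these bounds'' has the inequality backwards. The resulting bound would destroy $e_\mu=o(\mu^{27/4})$, on which the contraction and the energy expansion depend.

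No sharper dual-norm computation rescues the sum majorant. It is nonnegative, and pairing it with $\widehat U$, which is bounded in $H^1_{-\delta}$, already gives a quantity of order $\mu^2$. Each $\mu^2$ comes from the region where that term is the larger one. In the core the true interaction is about $pV_\mu\widehat U^{p-1}$; far out it is about $pV_\mu^{p-1}\widehat U$.

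\textbf{The missing idea is to use each term only where it is the smaller one.} For $n\ge30$ one has $1<p<2$, so with $A=V_\mu$ and $B=\widehat U_{\mu,\lambda,b}$,
\[
0\le(A+B)^p-A^p-B^p\le C\min\{AB^{p-1},A^{p-1}B\}.
\]
The two are comparable at $s=|x-a-\mu b|\approx\sqrt\mu$, so split there:
\begin{itemize}
\item on $s\le\sqrt\mu$, use $AB^{p-1}\le C\mu^2(\mu+s)^{-4}$;
\item on $s\ge\sqrt\mu$, use $A^{p-1}B\le C\mu^{(n-2)/2}s^{2-n}$.
\end{itemize}
With $q=2n/(n+2)$, both $\int_0^{\sqrt\mu}[\mu^2(\mu+s)^{-4}]^q s^{n-1}\,ds$ and $\int_{\sqrt\mu}^{3\rho}(\mu^{(n-2)/2}s^{2-n})^q s^{n-1}\,ds$ are $O(\mu^{n/2})$. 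The first is dominated by its upper endpoint because $4q<n$; the second by its lower endpoint because $(n-2)q>n$. Taking the $1/q$ power gives $\mu^{(n+2)/4}$.

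The parameter derivatives must keep this min structure too; ``the same types of terms'' is not enough. Use $|\partial^\beta_{(\lambda,b)}B|\le CB$ for $|\beta|\le2$, and expand in $B/A\le1$ on $\{B\le A\}$ and in $A/B\le1$ on $\{B\ge A\}$. This region-by-region treatment is what makes the term $[(A+B)^{p-2}-B^{p-2}](\partial B)^2$ in the second derivative obey the same min bound.
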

\begin{proof}
\step{Step 1. Metric and cutoff errors.}
The residual and its bubble parameter derivatives are supported
in $\overline{B_{2\rho}(a)}$. On this fixed set, the weights and their
reciprocals have bounds independent of $\mu$. The weighted dual norm
is therefore controlled by the local $L^{2n/(n+2)}$ norm, with constants
depending on the ball and background. This exponent gives the
critical source scaling under $x=a+\mu y$.

For the linear metric source $\mu^{27/4}\eta(|y|/R)S_{\lambda,b}$,
use the factors $|b|^2,|b|,1$ in \eqref{eq:center-source-bounds}.
The membership $(1+|y|)^{4-n}\in L^{2n/(n+2)}$ for $n>10$ proves
the first term in each estimate. We rescale the metric contribution
$(-L_{\gamma_\mu}+\Delta)\widehat U_{\mu,\lambda,b}$
by setting $x=a+\mu y$ and multiplying by $\mu^{(n+2)/2}$.
Its linear part is $\mu^{27/4}\eta(|y|/R)S_{\lambda,b}$.
After subtracting this linear part, the remainder and its
first two parameter derivatives are bounded by
\[
C\mu^{27/2}(1+|y|)^{12-n}\mathbf 1_{|y|\le R}.
\]
This majorant has bounded $L^{2n/(n+2)}$ norm for $n>26$.
The bubble cutoff error is $O(\mu^{(n-2)/2})$ in the dual norm, also after two
parameter derivatives.

\step{Step 2. The nonlinear interaction.}
For the interaction, in the local Euclidean variables let
\[
A(x):=V_\mu(x),\qquad
B(x):=\widehat U_{\mu,\lambda,b}(x)
=\zeta(x)\left(\frac{\mu\lambda}
 { (\mu\lambda)^2+|x-a-\mu b|^2}\right)^{(n-2)/2}.
\]

The positive function $A$ is independent of $(\lambda,b)$ and has
uniform positive upper and lower bounds on $B_{2\rho}(a)$. The cutoff makes $B\ge0$.
Since $n\ge30$, the exponent $(n+2)/(n-2)$ lies in $(1,2)$. Thus
\[
0\le(A+B)^p-A^p-B^p
\le C\min\{AB^{p-1},A^{p-1}B\}.
\]

Differentiating the explicit bubble, for every multiindex $\beta$ with
$|\beta|\le2$ we have
$|\partial_{(\lambda,b)}^{\beta}B|\le C B$, uniformly on $K$.
Applying the chain rule, we obtain
\[
\left|\partial_{(\lambda,b)}^{\beta}
 \{(A+B)^p-A^p-B^p\}\right|
\le C\min\{AB^{p-1},A^{p-1}B\}.
\]

When $0<B\le A$, divide the mixed expression by
$A^p$ and use the variable $B/A\le1$; its first two differentiated
terms, multiplied by $B$ and $B^2$ from the chain rule, are bounded by
$CA^{p-1}B$. When $B\ge A$, divide by $B^p$ and use $A/B\le1$;
the same products are bounded by $CAB^{p-1}$.
At a zero of $\zeta$, $B$ and all its parameter derivatives vanish,
hence the inequality extends there.

\step{Step 3. Integration of the interaction estimates.}
For the radial integration about the bubble center, write $s=|x-a-\mu b|$. On $s\le\sqrt\mu$ use
$B^{p-1}\le C\mu^2(\mu+s)^{-4}$, and on
$s\ge\sqrt\mu$ use $B\le C\mu^{(n-2)/2}s^{2-n}$. Since $n>6$,
\[
\begin{aligned}
\int_0^{\sqrt\mu}
 [\mu^2(\mu+s)^{-4}]^{2n/(n+2)}s^{n-1}\dd s
 &\le C\mu^{n/2},\\
\int_{\sqrt\mu}^{3\rho}
 (\mu^{(n-2)/2}s^{2-n})^{2n/(n+2)}s^{n-1}\dd s
 &\le C\mu^{n/2}.
\end{aligned}
\]
For the first estimate, split at $s=\mu$ and use
$n-8n/(n+2)>0$; for the second, use $n-(n-2)2n/(n+2)<0$.
Taking the $(n+2)/(2n)$ power, we obtain the bound
$C\mu^{(n+2)/4}$ for the interaction and its first two parameter derivatives. The cutoff error is absorbed
because $\mu^{(n-2)/2}\le\mu^{(n+2)/4}$ for $0<\mu\le1$ and $n\ge30$.
Together with the metric errors, this proves \eqref{eq:residual-bounds}.
By \eqref{eq:error-scale}, \(\mu^{-27/4}e_\mu
=\mu^{27/4}+\mu^{(n-25)/4}\to0\) as \(\mu\to 0\).
\end{proof}

We next impose orthogonality to the bubble Jacobi fields and define
the associated projections.
Let
\[
(J_{\mu,\lambda,b})_{0}:=\lambda\partial_\lambda \widehat U_{\mu,\lambda,b},\qquad
(J_{\mu,\lambda,b})_{i}:=\partial_{b_i}\widehat U_{\mu,\lambda,b}
\qquad 1\le i\le n,
\]
and let $\cK_{\mu,\lambda,b}$ be their span.
We use $\partial_i$ for Euclidean spatial derivatives and
$\partial_\lambda$, $\partial_{b_i}$, and
$\partial_{(\lambda,b)}^\beta$ for parameter derivatives.
Gradients and Hessians in the bubble parameters use ordinary
derivatives in $(\lambda,b)$.
Use the flat Robin scalar product
\begin{equation}\label{eq:star-product}
(\phi,\psi)_*:=Q_{\gamma_0}(\phi,\psi)
=\int_{B_1}D\phi\cdot D\psi\dd x
 +\frac{n-2}{2}\int_{\partial B_1}\phi\psi\dd S.
\end{equation}

When one factor is a bubble Jacobi field, its boundary trace is zero,
and this is the usual Euclidean Dirichlet pairing. In particular the
associated source is $(-\Delta)J_\alpha$.
Pairing with a compactly supported Jacobi field extends continuously to
$H^1_\delta$. Define the Gram matrix by
\[
(G_{\mu,\lambda,b})_{\alpha\beta}
:=((J_{\mu,\lambda,b})_\alpha,(J_{\mu,\lambda,b})_\beta)_*,
\qquad 0\le\alpha,\beta\le n.
\]
Changing variables by $x=a+\mu y$, we obtain
\begin{equation}\label{eq:gram-limit}
(G_{\mu,\lambda,b})_{\alpha\beta}
\to
\int_{\R^n}D (J_{\lambda,b})_{\alpha}\cdot D (J_{\lambda,b})_{\beta}\dd y,
\end{equation}
uniformly on $K$ as $\mu\downarrow0$. This limiting Gram matrix is
positive definite.

Write
\[
\cK_{\mu,\lambda,b}^{\perp_*}
:=\{w\in H^1_\delta:(w,(J_{\mu,\lambda,b})_\alpha)_*=0,\
                         0\le\alpha\le n\}.
\]
Let $\Pi^\perp:H^1_\delta\to\cK_{\mu,\lambda,b}^{\perp_*}$
be the projection along $\cK_{\mu,\lambda,b}$.
Write $G=G_{\mu,\lambda,b}$ and
$J_\alpha=(J_{\mu,\lambda,b})_\alpha$ for $0\le\alpha\le n$.
The projection
\[
\Pi^{\perp,*}:H^{-1}_\delta\to
\{f\in H^{-1}_\delta:\ip{f}{J_\beta}=0,\ 0\le\beta\le n\}
\]
is given by
\begin{equation}\label{eq:dual-projection}
\Pi^{\perp,*}f
=f-\sum_{\alpha,\beta=0}^n
 (G^{-1})_{\alpha\beta}\ip{f}{J_\beta}(-\Delta)J_\alpha.
\end{equation}
Its kernel is $\operatorname{span}\{(-\Delta)J_0,\ldots,(-\Delta)J_n\}$.
The fields $J_\alpha$ and sources $(-\Delta)J_\alpha$ are supported
in $\overline{B_{2\rho}(a)}$. Since $\chi=0$ on a neighborhood of this
closed ball, both $\Phi_{T_\mu,\tau}$ and $\psi_{T_\mu+\tau}$ vanish
there. The locality of $\cL_{\mu,\lambda,b}$ gives
\[
(\psi_{T_\mu},J_\alpha)_*=0,\qquad
\langle\cL_{\mu,\lambda,b}\psi_{T_\mu},J_\alpha\rangle=0,
\qquad 0\le\alpha\le n.
\]
Thus $\cL_{\mu,\lambda,b}\psi_{T_\mu}$ belongs to
$\Pi^{\perp,*}H^{-1}_\delta$.

We next prove a uniform inverse estimate for corrections
$w\perp_*\cK_{\mu,\lambda,b}$.
The constants may depend on the fixed background, weight, insertion
ball, local tensor, cutoffs, and parameter set $K$.

\begin{lemma}\label{lem:projected-inverse}
There are $C,\mu_0>0$ such that, for $0<\mu<\mu_0$,
$(\lambda,b)\in K$, and $(\tau,w)\in\mathbb R\oplus\mathcal K_{\mu,\lambda,b}^{\perp_*}$,
\begin{equation}\label{eq:projected-inverse}
|\tau|+\|w\|_{H^1_\delta}
\le C\left\|
\Pi^{\perp,*}\cL_{\mu,\lambda,b}(\tau\psi_{T_\mu}+w)
\right\|_{H^{-1}_\delta}.
\end{equation}
The projected operator from
$\R\oplus\cK_{\mu,\lambda,b}^{\perp_*}$ to
$\Pi^{\perp,*}H^{-1}_\delta$ is an isomorphism.
\end{lemma}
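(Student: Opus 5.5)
The plan is the standard contradiction-and-blow-up argument for Lyapunov--Schmidt inverses, combined with the uniform inverse of Lemma~\ref{lem:background-continuation} away from the bubble. Suppose \eqref{eq:projected-inverse} fails. Then there are $\mu_j\downarrow0$, $(\lambda_j,b_j)\in K$ and $(\tau_j,w_j)\in\R\oplus\cK^{\perp_*}_{\mu_j,\lambda_j,b_j}$ with $|\tau_j|+\|w_j\|_{H^1_\delta}=1$ and $f_j:=\Pi^{\perp,*}\cL_{\mu_j,\lambda_j,b_j}(\tau_j\psi_{T_{\mu_j}}+w_j)\to0$ in $H^{-1}_\delta$. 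Undoing the projection via \eqref{eq:dual-projection}, we get
\[
\cL_{\mu_j,\lambda_j,b_j}(\tau_j\psi_{T_{\mu_j}}+w_j)=f_j+\sum_\alpha c^j_\alpha(-\Delta)J_\alpha .
\]

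\textbf{Step 1: the multipliers are small.} Pair this identity with $J_\beta$. We have $\langle\cL\psi_{T_\mu},J_\beta\rangle=0$ by locality. Also, $\langle\cL_{\mu,\lambda,b}w,J_\beta\rangle=\langle w,\cL_{\mu,\lambda,b}J_\beta\rangle$ and $\|\cL_{\mu,\lambda,b}J_\beta\|_{H^{-1}_\delta}=o(1)$. The latter holds because $J_\beta$ solves the flat linearized equation up to errors from the metric ($O(\mu^{3/4})$), from the background $V_\mu$ in the potential, and from the cutoff $\zeta$; each is small after the rescaling $x=a+\mu y$, by the same $L^{2n/(n+2)}$ computations as in Lemma~\ref{lem:residual}. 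Together with \eqref{eq:gram-limit} this gives $c^j_\alpha\to0$, so the full (unprojected) residual tends to $0$ in $H^{-1}_\delta$.

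\textbf{Step 2: split into a bubble region and an outer region.} Write $\cL_{\mu,\lambda,b}=\cL_{\gamma_\mu,V_\mu}-n(n+2)[(V_\mu+\widehat U)^{p-1}-V_\mu^{p-1}]$. The correction potential is concentrated on $B_{C\mu}(a+\mu b)$ at scale $\mu$. Its size is $O(\mu^{-2})$ near the center, but its $L^{n/2}$ norm outside $B_{R\mu}(a+\mu b)$ tends to zero as $R\to\infty$ uniformly in $\mu$. Rescale $\tilde w_j(y)=\mu_j^{(n-2)/2}w_j(a+\mu_j y)$. This sequence is bounded in $\dot H^1_{\rm loc}$, and in fact in $\dot H^1(\R^n)$ after a cutoff, because the weights are comparable to constants near $a$ and the Dirichlet energy is scale invariant. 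Extract a weak limit $\tilde w$. It solves $-\Delta\tilde w-n(n+2)U_{\lambda,b}^{p-1}\tilde w=0$ on $\R^n$; the $\psi$ term disappears since it vanishes near $a$. It is also $\dot H^1$-orthogonal to $\cK_{\lambda,b}$, because the $*$-orthogonality passes to the limit. By Bianchi--Egnell, $\tilde w=0$. The mechanism is that $U^{p-1}\in L^{n/2}$ makes multiplication compact, so $\int U_{\lambda_j,b_j}^{p-1}\tilde w_j^2\to0$ locally. Hence
\[
\int[(V_\mu+\widehat U)^{p-1}-V_\mu^{p-1}]\,w_j\,\varphi\to0
\]
for every bounded $\varphi\in H^1_{-\delta}$, with a uniform rate when estimated by $\|U^{p-1}\tilde w_j\|_{L^{2n/(n+2)}}$. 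The tail contribution is controlled by the smallness of the $L^{n/2}$ norm together with \eqref{eq:weighted-multiplication}. It follows that $\widehat\cL_{\gamma_{\mu_j},V_{\mu_j}}(\tau_j,w_j)\to0$ in $H^{-1}_\delta$, so the uniform inverse bound of Lemma~\ref{lem:background-continuation} gives $|\tau_j|+\|w_j\|_{H^1_\delta}\to0$. This contradicts the normalization.

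\textbf{Expected obstacle: Step 2.} The convergence must hold in the dual norm, not merely weakly. The difference $[(V_\mu+\widehat U)^{p-1}-V_\mu^{p-1}]w_j$ must be shown small in $H^{-1}_\delta$, and this does not follow from weak convergence alone. To get it, I would first establish $\int U_{\lambda_j,b_j}^{p-1}\tilde w_j^2\to0$ using the compactness of $U^{p-1}$-weighted $L^2$ on $\dot H^1$, and then apply H\"older to pass to $\|U^{p-1}\tilde w_j\|_{L^{2n/(n+2)}}\to0$, again splitting into near-field and tail. I would also check the mixed regime where $V_\mu\sim\widehat U$, bounding it with the convexity estimate $|(A+B)^{p-1}-A^{p-1}|\le B^{p-1}$ already used in Lemma~\ref{lem:residual}.

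\textbf{Surjectivity.} This follows from the estimate together with a Fredholm/continuity argument. The projected operator is a compact perturbation of the invertible map $\Pi^{\perp,*}\widehat\cL_{\gamma_\mu,V_\mu}$ restricted to the finite-codimension spaces, so it has index $0$, and injectivity then gives an isomorphism.
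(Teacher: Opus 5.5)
Your proposal is correct and follows essentially the same route as the paper. It argues by contradiction, shows the multipliers are small via $\|\cL_{\mu,\lambda,b}J_\beta\|_{H^{-1}_\delta}=o(1)$ and the Gram limit, obtains a vanishing blow-up limit at the bubble scale from Bianchi--Egnell and the moment conditions, proves dual-norm smallness of $\mathcal V_jw_j$, and concludes with the uniform inverse from Lemma~\ref{lem:background-continuation}; your estimate $\|U^{p-1}\tilde w_j\|_{L^{2n/(n+2)}}\le C(\int U^{p-1}\tilde w_j^2)^{1/2}$ combined with compactness of $\dot H^1\hookrightarrow L^2(U^{p-1}\,dy)$ is an equivalent repackaging of the paper's near-field/tail split. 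One small wording fix for surjectivity: the compression of $\widehat\cL_{\gamma_\mu,V_\mu}$ from $\R\oplus\cK^{\perp_*}_{\mu,\lambda,b}$ to $\Pi^{\perp,*}H^{-1}_\delta$ is not known to be invertible, only Fredholm of index $0-(n+1)+(n+1)=0$, and that index count, the compactness of $\mathcal V_\mu$, and injectivity are exactly what the paper uses.
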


\begin{proof}
\step{Step 1. The Fredholm index.}
By Lemma~\ref{lem:background-continuation},
$\widehat\cL_{\gamma_\mu,V_\mu}:\cX_\delta\to H^{-1}_\delta$
is an isomorphism. For each fixed $\mu$, the additional potential
\[
\mathcal V_\mu=n(n+2)\{(V_\mu+\widehat U_{\mu,\lambda,b})^{p-1}-V_\mu^{p-1}\}
\]
is smooth, bounded, and compactly supported. Multiplication by
$\mathcal V_\mu$ is compact from $H^1_\delta$ to $H^{-1}_\delta$.
Since $\mathcal V_\mu\psi_{T_\mu}=0$, we have
\[
\cL_{\mu,\lambda,b}(\tau\psi_{T_\mu}+w)
=\widehat\cL_{\gamma_\mu,V_\mu}(\tau,w)-\mathcal V_\mu w.
\]

Thus $(\tau,w)\mapsto\cL_{\mu,\lambda,b}(\tau\psi_{T_\mu}+w)$
is a compact perturbation of an isomorphism from $\cX_\delta$
to $H^{-1}_\delta$ and has Fredholm index zero.
The conditions $(w,J_\alpha)_*=0$, $0\le\alpha\le n$, define a
subspace of codimension $n+1$ in the domain. The projection
$\Pi^{\perp,*}$ onto its range has kernel
$\operatorname{span}\{(-\Delta)J_0,\ldots,(-\Delta)J_n\}$ of
dimension $n+1$. Restricting the domain by these orthogonality
conditions and projecting the image therefore gives Fredholm
index $0-(n+1)+(n+1)=0$.

\step{Step 2. Control of the Jacobi components.}
Suppose the estimate fails. Choose $\mu_j\downarrow0$,
$(\lambda_j,b_j)\to(\lambda_\infty,b_\infty)$ and
pairs $(\tau_j,w_j)$ with
$|\tau_j|+\|w_j\|_{H^1_\delta}=1$ and
$w_j\perp_*\cK_{\mu_j,\lambda_j,b_j}$, whose projected images tend to zero. Write
$\cL_j=\cL_{\mu_j,\lambda_j,b_j}$,
and
$J_j=J_{\mu_j,\lambda_j,b_j}$. With
$Y_j=\tau_j\psi_{T_{\mu_j}}+w_j$,  there exists
$q_j\in\mathbb R^{n+1}$ such that
\[
\cL_jY_j
=f_j+\sum_{\alpha=0}^n (q_j)_\alpha(-\Delta)(J_j)_\alpha,
\qquad
\|f_j\|_{H^{-1}_\delta}\to0.
\]
By the identities
\[
\cL_j(J_j)_0
=\left.\lambda\partial_\lambda\cR_{\mu_j,\lambda,b}
  \right|_{(\lambda,b)=(\lambda_j,b_j)},\qquad 
\cL_j(J_j)_i
=\left.\partial_{b_i}\cR_{\mu_j,\lambda,b}
  \right|_{(\lambda,b)=(\lambda_j,b_j)},\qquad 1\le i\le n,
\]
and Lemma~\ref{lem:residual}, we obtain
$\|\cL_j(J_j)_\beta\|_{H^{-1}_\delta}=o(1)$ for $0\le\beta\le n$.
Testing against the compactly supported $(J_j)_{\beta}$ and using symmetry,
we obtain
\[
\ip{\cL_jY_j}{(J_j)_{\beta}}
=\ip{Y_j}{\cL_j(J_j)_{\beta}}=o(1).
\]
The variations $Y_j$ are bounded in $H^1_{-\delta}$.
By \eqref{eq:gram-limit}, $q_j=o(1)$, and hence
\begin{equation}\label{eq:full-image-small}
\|\cL_jY_j\|_{H^{-1}_\delta}\to0.
\end{equation}
\step{Step 3. Vanishing of the rescaled limit.}
We show that every weak limit at the bubble scale is zero. We then apply
the inverse for the background linearization to control $(\tau_j,w_j)$
on all of $X$.

Choose a fixed cutoff in the original insertion coordinates
$\zeta_0\in C_c^\infty(B_{3\rho}(a))$, equal to one on a neighborhood
of $\overline{B_{2\rho}(a)}$. The phase cutoff vanishes on this larger
ball. In the fixed coordinates define
\[
\widetilde w_j(y)
 =\mu_j^{(n-2)/2}(\zeta_0 w_j)(a+\mu_jy),
\]
and extend it by zero to $\R^n$. Critical scaling and the fixed smooth
cutoff ensure the global bounds
\begin{equation}\label{eq:global-rescaled-energy}
\begin{split}
\|D\widetilde w_j\|_{L^2(\R^n)}
 &=\|D(\zeta_0 w_j)\|_{L^2(\R^n)}
   \le C\|w_j\|_{H^1_\delta}\le C,\\
\|\widetilde w_j\|_{L^{p+1}(\R^n)}&\le C.
\end{split}
\end{equation}
After passing to a subsequence,
\[
\widetilde w_j\rightharpoonup\widetilde w
\quad\text{in }\dot H^1(\R^n)\quad\text{as }j\to\infty.
\]
On every fixed rescaled ball, $\zeta_0(a+\mu_jy)=1$ for large $j$.
By critical rescaling and \eqref{eq:full-image-small}, testing on
that ball and passing to the limit yields
\[
-\Delta\widetilde w-n(n+2)\,U_{\lambda_\infty,b_\infty}^{p-1}
\widetilde w=0\quad\text{in }\R^n.
\]

We next rescale the Euclidean moment conditions. Let
\[
(\mathcal Z_j)_{\alpha}(y)
 :=\mu_j^{(n-2)/2}((J_j)_{\alpha})(a+\mu_jy)
 =\zeta(a+\mu_jy)(J_{\lambda_j,b_j})_{\alpha}(y).
\]
Since $\zeta_0=1$ on a neighborhood of the support of each
$(J_j)_\alpha$, scaling the Dirichlet product \eqref{eq:star-product} yields
\begin{equation}\label{eq:rescaled-moment-identity}
0=(w_j,(J_j)_{\alpha})_*
 =\int_{\R^n}D\widetilde w_j\cdot D(\mathcal Z_j)_{\alpha}\dd y.
\end{equation}
For $L$ sufficiently large and $j$ sufficiently large depending on $L$,
\[
\|D(\mathcal Z_j)_{\alpha}\|_{L^2(\R^n\setminus B_L)}
 \le CL^{-(n-2)/2}.
\]
Indeed, $D_y(J_{\lambda_j,b_j})_\alpha=O(|y|^{1-n})$.
The term obtained by differentiating the cutoff satisfies
\[
\left\|\mu_j(D_x\zeta)(a+\mu_j\,\cdot)
(J_{\lambda_j,b_j})_\alpha\right\|_{L^2(\R^n)}
\le C\mu_j^{(n-2)/2}.
\]
Combining this bound with \eqref{eq:global-rescaled-energy}, we obtain
\[
\limsup_{j\to\infty}
\left|\int_{\R^n\setminus B_L}
 D\widetilde w_j\cdot D(\mathcal Z_j)_{\alpha}\dd y\right|
 \le CL^{-(n-2)/2}.
\]
On $B_L$, the Jacobi factors converge smoothly to
$(J_{\lambda_\infty,b_\infty})_{\alpha}$. First pass to the limit in $j$
in \eqref{eq:rescaled-moment-identity} and then let $L\to\infty$ to obtain
\[
\int_{\R^n}D\widetilde w\cdot
 D (J_{\lambda_\infty,b_\infty})_{\alpha}\dd y=0,
 \qquad 0\le\alpha\le n.
\]

By the Euclidean kernel theorem, $\widetilde w$ belongs to
$\cK_{\lambda_\infty,b_\infty}$. The Jacobi orthogonality conditions
imply $\widetilde w=0$. By Rellich compactness,
$\widetilde w_j\to0$ in $L^{2n/(n+2)}$ on every fixed rescaled ball.

\step{Step 4. The contradiction in the full weighted norm.}
By the concavity of the power $4/(n-2)\in(0,1)$, we have
$0\le \mathcal V_j\le C B_j^{p-1}$.
Here $B_j=\widehat U_{\mu_j,\lambda_j,b_j}$ and $\mathcal V_j$ is the
corresponding additional potential. On a fixed rescaled ball $B_L$,
the rescaled coefficient $\mu_j^2B_j(a+\mu_j y)^{p-1}$ is bounded by
$C_L$. The critical
scaling of the source norm and the tail estimate
\[
\|B_j^{p-1}\|_{L^{n/2}(X\setminus B_{L\mu_j}(a))}\le CL^{-2}
\]
therefore imply
\begin{equation*}
\|\mathcal V_jw_j\|_{H^{-1}_\delta}
\le C_L\|\widetilde w_j\|_{L^{2n/(n+2)}(B_L)}
       +CL^{-2}\|w_j\|_{H^1_\delta}.
\end{equation*}

For the second term, use H\"older's inequality with $L^{n/2}$ and
$L^{p+1}$. The weights and their reciprocals have bounds independent
of $j$ on the fixed bubble support. First fix $L$ and let $j\to\infty$;
the first term tends to zero by the rescaled strong convergence.
Then let $L\to\infty$. We conclude that
$\|\mathcal V_jw_j\|_{H^{-1}_\delta}\to0$.
Since $\mathcal V_j\psi_{T_{\mu_j}}=0$, the identity
\[
\widehat\cL_{\gamma_{\mu_j},V_{\mu_j}}(\tau_j,w_j)
=\cL_jY_j+\mathcal V_jw_j
\]
and \eqref{eq:full-image-small} show that the left side tends to zero.
Its uniformly bounded inverse contradicts
$|\tau_j|+\|w_j\|_{H^1_\delta}=1$. This proves \eqref{eq:projected-inverse} on all of $X$.
Since the projected operator has Fredholm index zero, it is also surjective.
\end{proof}

We next solve the nonlinear projected equation and estimate its parameter derivatives.
We use the nonlinear phase family \eqref{eq:phase-family}. Throughout
this construction the
insertion point, radius, background, and phase cutoff are fixed. The
parameter set $K$ may be reduced so that $|b|\le1$.

For $(\tau,\phi)\in\cX_\delta$ with $|\tau|$ sufficiently small, set
\[
Y_{\mu,\lambda,b}(\tau,\phi)
:=V_\mu+\widehat U_{\mu,\lambda,b}+\Phi_{T_\mu,\tau}+\phi.
\]
In the parameter calculations below, write
$B=\widehat U_{\mu,\lambda,b}$,
$J_\alpha=(J_{\mu,\lambda,b})_\alpha$ for $0\le\alpha\le n$,
$G_\mu=G_{\mu,\lambda,b}$, and $\cR_\mu=\cR_{\mu,\lambda,b}$.

With $\mu,\lambda,b$ fixed, define the linearization at
$V_\mu+B$ and its nonlinear remainder by
\[
\begin{aligned}
\widehat\cL_\mu(\tau,\phi)
&:=\cL_{\mu,\lambda,b}(\tau\psi_{T_\mu}+\phi),\\
\cN_\mu(\tau,\phi)
&:=\cR_\mu+\widehat\cL_\mu(\tau,\phi)
 -\Fop_{\gamma_\mu}(Y_{\mu,\lambda,b}(\tau,\phi)).
\end{aligned}
\]

The reference $Y_{\mu,\lambda,b}(0,0)=V_\mu+B$ has residual $\cR_\mu$,
and $\cN_\mu(0,0)=D\cN_\mu(0,0)=0$.
There is $0<\varepsilon_0\le1$, independent of small $\mu$ and of
$(\lambda,b)\in K$, such that the map
$(\tau,\phi)\mapsto\Fop_{\gamma_\mu}(Y_{\mu,\lambda,b}(\tau,\phi))$
is $C^1$ from $(-\varepsilon_0,\varepsilon_0)\times H^1_\delta$
to $H^{-1}_\delta$, and
\begin{equation}\label{eq:inserted-derivative-comparison}
\bigl\|D_{(\tau,\phi)}
  [\Fop_{\gamma_\mu}(Y_{\mu,\lambda,b}(\tau,\phi))]
  -\widehat\cL_\mu\bigr\|_{\cX_\delta\to H^{-1}_\delta}
\le C\bigl(|\tau|+\|\phi\|_{H^1_\delta}
                         +\|\phi\|_{H^1_\delta}^{p-1}\bigr).
\end{equation}

For $0<\varepsilon<\varepsilon_0$ and
$\|(\tau,\phi)\|_{\cX_\delta},
 \|(\widetilde\tau,\widetilde\phi)\|_{\cX_\delta}\le\varepsilon$,

\begin{equation}\label{eq:inserted-remainder-bound}
\|\cN_\mu(\tau,\phi)-\cN_\mu(\widetilde\tau,\widetilde\phi)\|_{H^{-1}_\delta}
\le C(\varepsilon+\varepsilon^{p-1})
\|(\tau-\widetilde\tau,\phi-\widetilde\phi)\|_{\cX_\delta}.
\end{equation}

The constants are independent of small $\mu$ and uniform on $K$,
with the background, insertion ball, and cutoffs fixed.
By Lemma~\ref{lem:background-continuation}, the continued backgrounds
have uniform weighted bounds. Critical scaling gives
\[
\|B\|_{L^{p+1}(B_{2\rho}(a))}\le C,\qquad
\|(V_\mu+B)^{p-1}\|_{L^{n/2}(B_{2\rho}(a))}\le C.
\]

The phase family vanishes on a neighborhood of $\overline{B_{2\rho}(a)}$.
On this ball, the first scalar estimate in \eqref{eq:scalar-Nemytskii}
and the multiplication estimate \eqref{eq:weighted-multiplication}
control the derivative in $\phi$ uniformly in $\mu,\lambda,b$.
Outside the bubble support, the proof of Lemma~\ref{lem:nonlinear-map}
applies to $V_\mu$ and $T_\mu$.
These bounds prove \eqref{eq:inserted-derivative-comparison}.
Integrating $D\cN_\mu$ along a segment proves
\eqref{eq:inserted-remainder-bound}.

The projected equation is
\begin{equation}\label{eq:projected-equation}
\Pi^{\perp,*}\Fop_{\gamma_\mu}
       (Y_{\mu,\lambda,b}(\tau,\phi))=0,
\qquad \phi\perp_*\cK_{\mu,\lambda,b}.
\end{equation}

For a solution family $(\tau_\mu,\phi_\mu)$ constructed in
Proposition~\ref{prop:projected-solution}, define the multipliers
$c_\mu(\lambda,b)\in\R^{n+1}$ by
\begin{equation}\label{eq:full-projected}
\Fop_{\gamma_\mu}(Y_{\mu,\lambda,b}(\tau_\mu,\phi_\mu))
=\sum_{\alpha=0}^n (c_\mu)_{\alpha}(\lambda,b)(-\Delta) (J_{\mu,\lambda,b})_{\alpha}.
\end{equation}
Define the reduced relative energy by
\begin{equation}\label{eq:reduced-energy}
\widetilde\cJ_\mu(\lambda,b)
:=\cJ_{\gamma_\mu,V_\mu}
       (\tau_\mu,\widehat U_{\mu,\lambda,b}+\phi_\mu).
\end{equation}
\begin{proposition}\label{prop:projected-solution}
For some $C_*>0$, all sufficiently small $\mu$, and all $(\lambda,b)\in K$,
\eqref{eq:projected-equation} has a unique solution
$(\tau_\mu,\phi_\mu)$ in the ball
$|\tau|+\|\phi\|_{H^1_\delta}\le C_*\mu^{27/4}$.
For each fixed $\mu$, the maps
$(\lambda,b)\mapsto(\tau_\mu,\phi_\mu)\in\mathcal X_\delta$
and $(\lambda,b)\mapsto c_\mu$ are $C^1$ and satisfy
\begin{align}\label{eq:correction-bounds}
|\tau_\mu|+\|\phi_\mu\|_{H^1_\delta}+|c_\mu|
 &\le C\bigl(\mu^{27/4}|b|^2+e_\mu\bigr),\\
|D_{\lambda,b}\tau_\mu|
+\|D_{\lambda,b}\phi_\mu\|_{H^1_\delta}
+|D_{\lambda,b}c_\mu|
 &\le C\bigl(\mu^{27/4}|b|+e_\mu\bigr).\label{eq:correction-derivative}
\end{align}
The reduced energy $\widetilde\cJ_\mu$ is $C^2$.
\end{proposition}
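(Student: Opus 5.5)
The projected equation \eqref{eq:projected-equation} will be solved as a fixed point. Write $\mathcal M:=\Pi^{\perp,*}\widehat\cL_\mu$ restricted to $\R\oplus\cK_{\mu,\lambda,b}^{\perp_*}$. Lemma~\ref{lem:projected-inverse} says $\mathcal M$ is an isomorphism onto $\Pi^{\perp,*}H^{-1}_\delta$, with inverse bounded by $C$ uniformly in small $\mu$ and $(\lambda,b)\in K$. Since $\Fop_{\gamma_\mu}(Y_{\mu,\lambda,b}(\tau,\phi))=\cR_\mu+\widehat\cL_\mu(\tau,\phi)-\cN_\mu(\tau,\phi)$, the equation is equivalent to
\[
(\tau,\phi)=\mathcal M^{-1}\Pi^{\perp,*}\bigl(\cN_\mu(\tau,\phi)-\cR_\mu\bigr)=:\mathcal T(\tau,\phi).
\]
The projection $\Pi^{\perp,*}$ in \eqref{eq:dual-projection} is bounded uniformly, by the Gram limit \eqref{eq:gram-limit} and the uniform $H^1_{-\delta}$ bounds on the compactly supported $J_\alpha$. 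By Lemma~\ref{lem:residual}, $\|\cR_\mu\|\le C(\mu^{27/4}|b|^2+e_\mu)\le C_1\mu^{27/4}$. By \eqref{eq:inserted-remainder-bound} with $\varepsilon=C_*\mu^{27/4}$, $\mathcal T$ is a contraction with constant $C(\varepsilon+\varepsilon^{p-1})\to0$. It maps the ball of radius $C_*\mu^{27/4}$ into itself once $C_*\ge 2CC_1$ and $\mu$ is small. The Banach fixed point theorem then gives existence and uniqueness. The contraction also yields $\|(\tau_\mu,\phi_\mu)\|\le 2C\|\cR_\mu\|$, which is the first part of \eqref{eq:correction-bounds}. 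The multipliers are $(c_\mu)_\alpha=\sum_\beta(G^{-1})_{\alpha\beta}\ip{\Fop_{\gamma_\mu}(Y)}{J_\beta}$. For them, write $\Fop=\cR_\mu+\widehat\cL_\mu(\tau_\mu,\phi_\mu)-\cN_\mu$ and bound each term. The linear term is of size $\|(\tau_\mu,\phi_\mu)\|$ (it is even better, by symmetry and $\|\cL J_\beta\|=o(1)$), and $\|\cN_\mu\|\le C(\varepsilon+\varepsilon^{p-1})\|(\tau_\mu,\phi_\mu)\|$. Together these give the bound on $|c_\mu|$.

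For the $C^1$ dependence on $(\lambda,b)$, the parameter-dependent constraint space $\cK^{\perp_*}_{\mu,\lambda,b}$ is awkward, so I would remove it. Let $P_{\lambda,b}=\Pi^\perp$ and parametrize $\cK^{\perp_*}_{\mu,\lambda,b}$ as $P_{\lambda,b}(\cK^{\perp_*}_{\mu,\lambda_0,b_0})$. Since $J_\alpha$ and $G$ depend smoothly on the parameters, this family of isomorphisms is $C^1$ for fixed $\mu$. Equivalently, apply the implicit function theorem to the full system
\[
\Fop_{\gamma_\mu}(Y_{\mu,\lambda,b}(\tau,\phi))-\sum_\alpha c_\alpha(-\Delta)J_\alpha=0,\qquad (\phi,J_\alpha)_*=0,
\]
with unknowns $(\tau,\phi,c)\in\cX_\delta\times\R^{n+1}$ and parameters $(\lambda,b)$. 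The map is $C^1$ in all variables. In $(\tau,\phi)$ this follows from \eqref{eq:inserted-derivative-comparison}. In $(\lambda,b)$ it holds because $B$, $J_\alpha$ and $(-\Delta)J_\alpha$ are smooth in the parameters with values in $H^1_\delta$, resp.\ $H^{-1}_\delta$. One must also check that $\phi\mapsto (B+\phi)$-nonlinearities are $C^1$ jointly with $B$; this uses $|\partial^\beta B|\le CB$ and the Hölder estimates \eqref{eq:scalar-Nemytskii}. The partial derivative in $(\tau,\phi,c)$ is invertible by Lemma~\ref{lem:projected-inverse} together with the invertibility of $G$. Local uniqueness then identifies the implicit-function solution with the fixed point, so $(\tau_\mu,\phi_\mu,c_\mu)$ is $C^1$.

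For \eqref{eq:correction-derivative}, differentiate the system in a parameter direction $\partial$. This gives
\[
\mathcal{D}(\partial\tau,\partial\phi,\partial c)=-\partial\cR_\mu-\bigl[\partial_{\lambda,b}\text{ of the explicit dependence of }\Fop-\cR_\mu\bigr]+\sum c_\alpha(-\Delta)\partial J_\alpha ,
\]
together with $(\partial\phi,J_\alpha)_*=-(\phi,\partial J_\alpha)_*$. Here $\mathcal D$ is the uniformly invertible linearized operator from the previous paragraph (perturbed by $O(\varepsilon^{p-1})$). By Lemma~\ref{lem:residual}, $\|\partial\cR_\mu\|\le C(\mu^{27/4}|b|+e_\mu)$. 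The explicit dependence through $B$ inside $\widehat\cL_\mu(\tau,\phi)-\cN_\mu$ contributes $C\|(\tau_\mu,\phi_\mu)\|$ (and $\|(\tau_\mu,\phi_\mu)\|^{p-1}$-type factors times $\|\phi_\mu\|$). The remaining terms are $O(|c_\mu|+\|\phi_\mu\|)$, which is bounded by $\mu^{27/4}|b|^2+e_\mu\le\mu^{27/4}|b|+e_\mu$ since $|b|\le1$. I expect the main obstacle to be this step: the derivative of the potential $(V_\mu+B+\phi)_+^{p-1}$ with respect to $B$ is not Lipschitz in $\phi$ when $p-1<1$. The bound must therefore be organized as a product $B^{p-2}\partial B\cdot\phi$ type estimate, using $|\partial B|\le CB$ and concavity, to avoid losing powers.

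Finally, $\widetilde\cJ_\mu$ is $C^2$. The map $(\lambda,b)\mapsto(\tau_\mu,B+\phi_\mu)\in\mathcal U_\delta$ is only $C^1$, so I would not rely on composition for the second derivative. Instead, compute $D\widetilde\cJ_\mu$ by Lemma~\ref{lem:relative-energy} and \eqref{eq:relative-first-variation}. The residual equals $\sum c_\alpha(-\Delta)J_\alpha$ and annihilates $\psi_{T_\mu+\tau}$ because of the supports. It pairs with the $\phi$-variation through $(\partial\phi_\mu,J_\alpha)_*=-(\phi_\mu,\partial J_\alpha)_*$. This gives
\[
\partial\widetilde\cJ_\mu=\sum_\alpha c_\alpha\bigl[(J_\alpha,\partial B)_*-(\phi_\mu,\partial J_\alpha)_*\bigr].
\]
The right side is $C^1$ in $(\lambda,b)$, since $c_\mu$ and $\phi_\mu$ are $C^1$ and the $J_\alpha$ are smooth. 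Hence $\widetilde\cJ_\mu\in C^2$.
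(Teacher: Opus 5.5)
Your proposal is correct and follows essentially the same route as the paper: a contraction for the projected equation built on Lemma~\ref{lem:projected-inverse}, the implicit function theorem applied to the augmented system with multipliers and moment constraints, and differentiation of that system to obtain \eqref{eq:correction-derivative}. The $C^2$ claim then comes from the reduced-gradient identity $\nabla\widetilde\cJ_\mu=\operatorname{diag}(\lambda^{-1},1,\ldots,1)M_\mu^Tc_\mu$, exactly as you propose.

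The obstacle you anticipate does not actually arise. The explicit parameter dependence enters only through
\[
\cL_YD_{\lambda,b}B=D_{\lambda,b}\cR_\mu-n(n+2)\{(V_\mu+B+\phi_\mu)_+^{p-1}-(V_\mu+B)^{p-1}\}D_{\lambda,b}B,
\]
with no derivative of the potential required. The paper bounds this pointwise by $C(V_\mu+B)^{p-1}|\phi_\mu|$, using $|(1+t)_+^{p-1}-1|\le C|t|$ and $|D_{\lambda,b}B|\le C(V_\mu+B)$, which is the product estimate you describe.
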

\begin{proof}
\step{Step 1. Contraction.}
Use the maps $\widehat\cL_\mu$ and $\cN_\mu$ defined before the
proposition and the uniform estimate \eqref{eq:inserted-remainder-bound}.
The inverse in Lemma~\ref{lem:projected-inverse} defines the map
\[
\mathcal T_\mu(\tau,\phi)
:=(\Pi^{\perp,*}\widehat\cL_\mu)^{-1}
       \Pi^{\perp,*}\bigl(-\cR_\mu+\cN_\mu(\tau,\phi)\bigr)
\]
on $\R\oplus\cK_{\mu,\lambda,b}^{\perp_*}$. On the closed ball centered
at $(0,0)$ of $\cX_\delta$ radius $0<\varepsilon<\varepsilon_0$,
\eqref{eq:residual-bounds} and \eqref{eq:inserted-remainder-bound} imply
\[
\|\mathcal T_\mu(0,0)\|_{\cX_\delta}
 \le C\bigl(\mu^{27/4}|b|^2+e_\mu\bigr)
\quad\text{and}\quad
\operatorname{Lip}(\mathcal T_\mu)\le C(\varepsilon+\varepsilon^{p-1}).
\]
Choose $C_*\ge4C(1+\sup_K|b|^2)$ and set $\varepsilon=C_*\mu^{27/4}$.
Then decrease $\mu$ so that $e_\mu\le\mu^{27/4}$,
$\varepsilon<\varepsilon_0$, and $C(\varepsilon+\varepsilon^{p-1})<1/2$.
Thus $\|\mathcal T_\mu(0,0)\|_{\cX_\delta}\le\varepsilon/2$, and
$\mathcal T_\mu$ is a contraction of the closed ball of radius
$\varepsilon$ into itself. Its unique fixed point solves
\eqref{eq:projected-equation}. Absorbing the nonlinear term proves the
sharper bound in \eqref{eq:correction-bounds}. The residual lies in the
span of the $(-\Delta)J_\alpha$, and the uniformly invertible Gram matrix
controls the multipliers in \eqref{eq:full-projected}.

\step{Step 2. The full differential.}
To differentiate the solution, include the moment constraints and
multipliers as components of the nonlinear system.
Write $Y=Y_{\mu,\lambda,b}(\tau_\mu,\phi_\mu)$ and
$c_\alpha=(c_\mu)_\alpha$ for $0\le\alpha\le n$. The derivative in
$(\tau,\phi,c)$ sends $(\dot\tau,\dot\phi,\dot c)$ to
\[
\Big(
 [-L_{\gamma_\mu}-n(n+2)(Y_+)^{p-1}]
       (\dot\tau\psi_{T_\mu+\tau_\mu}+\dot\phi)
       -\sum_{\alpha=0}^n(\dot c)_\alpha (-\Delta)J_\alpha,
 \bigl((\dot\phi,J_\beta)_*\bigr)_{\beta=0}^n
\Big).
\]

For a source $f$ and prescribed moments $m\in\R^{n+1}$,
the inverse at zero correction is determined by
\[
\begin{aligned}
\dot\phi^\parallel
 &=\sum_{\alpha,\beta=0}^n(G_\mu^{-1})_{\alpha\beta}m_\beta J_\alpha,\\
(\dot\tau,\dot\phi^\perp)
 &=(\Pi^{\perp,*}\widehat\cL_\mu)^{-1}\Pi^{\perp,*}
       \bigl(f-\widehat\cL_\mu(0,\dot\phi^\parallel)\bigr),\\
\dot c
 &=G_\mu^{-1}\bigl(\langle\widehat\cL_\mu(\dot\tau,\dot\phi)-f,
                    J_\beta\rangle\bigr)_{\beta=0}^n,
\qquad \dot\phi=\dot\phi^\parallel+\dot\phi^\perp.
\end{aligned}
\]
By \eqref{eq:weighted-multiplication}, the Gram bound, and
Lemma~\ref{lem:projected-inverse}, we obtain
\[
|\dot\tau|+\|\dot\phi\|_{H^1_\delta}+|\dot c|
\le C\bigl(\|f\|_{H^{-1}_\delta}+|m|\bigr).
\]
By \eqref{eq:inserted-derivative-comparison}, the derivative of
the full system in $(\tau,\phi,c)$ at the constructed solution differs
from its derivative at zero correction by an operator of norm
$O(\varepsilon+\varepsilon^{p-1})$, with $\varepsilon=C_*\mu^{27/4}$.
A Neumann series preserves the inverse. At the constructed solution of the projected equation,
write $\cL_Y:=-L_{\gamma_\mu}-n(n+2)(Y_+)^{p-1}$. The resulting bound is
\begin{equation}\label{eq:full-differential-inverse}
|\dot\tau|+\|\dot\phi\|_{H^1_\delta}+|\dot c|
\le C\Big\|\cL_Y(\dot\tau\psi_{T_\mu+\tau_\mu}+\dot\phi)
-\sum_{\alpha=0}^n(\dot c)_\alpha(-\Delta)J_\alpha\Big\|_{H^{-1}_\delta}
+C\left|\bigl((\dot\phi,J_\beta)_*\bigr)_{\beta=0}^n\right|.
\end{equation}
The implicit function theorem now gives $C^1$ dependence of
$(\tau_\mu,\phi_\mu,c_\mu)$ on $(\lambda,b)$.

We next estimate the parameter derivatives of the solution and
multipliers. Differentiating the full system with respect to
$(\lambda,b)$ gives
\begin{align*}
\cL_Y\bigl((D_{\lambda,b}\tau_\mu)\psi_{T_\mu+\tau_\mu}
                   +D_{\lambda,b}\phi_\mu\bigr)
 -\sum_{\alpha=0}^n(D_{\lambda,b}c_\alpha)(-\Delta)J_\alpha
 &=-\cL_YD_{\lambda,b}B
   +\sum_{\alpha=0}^n c_\alpha(-\Delta)D_{\lambda,b}J_\alpha,\\
(D_{\lambda,b}\phi_\mu,J_j)_*
 &=-(\phi_\mu,D_{\lambda,b}J_j)_*,\qquad 0\le j\le n.
\end{align*}
These identities are read componentwise in the order
$\lambda,b_1,\ldots,b_n$.
Since $\Phi_{T_\mu,\tau_\mu}=0$ on a neighborhood of
$\overline{B_{2\rho}(a)}$, which contains the support of $B$ and its
parameter derivatives, we have
\[
\cL_YD_{\lambda,b}B=D_{\lambda,b}\cR_\mu
 -n(n+2)\{(V_\mu+B+\phi_\mu)_+^{p-1}-(V_\mu+B)^{p-1}\}
            D_{\lambda,b}B.
\]
The scalar bound $|(1+t)_+^{p-1}-1|\le C|t|$ and
$|D_{\lambda,b}B|\le C(V_\mu+B)$ imply
\[
\left|\{(V_\mu+B+\phi_\mu)_+^{p-1}-(V_\mu+B)^{p-1}\}
                  D_{\lambda,b}B\right|
\le C(V_\mu+B)^{p-1}|\phi_\mu|.
\]
The $L^{n/2}$ norm of $(V_\mu+B)^{p-1}$ on this support is uniformly
bounded. 
By H\"older's inequality and the Sobolev embedding, the
$H^{-1}_\delta$ norm of
$\{(V_\mu+B+\phi_\mu)_+^{p-1}-(V_\mu+B)^{p-1}\}D_{\lambda,b}B$
is at most $C\|\phi_\mu\|_{H^1_\delta}$.
The Jacobi fields and their first two parameter derivatives are
uniformly bounded in $H^1_\delta$. Applying
\eqref{eq:full-differential-inverse} and using
\eqref{eq:residual-bounds} and \eqref{eq:correction-bounds}, we obtain
\[
\begin{aligned}
|D_{\lambda,b}\tau_\mu|+\|D_{\lambda,b}\phi_\mu\|_{H^1_\delta}
 +|D_{\lambda,b}c_\mu|
&\le C\bigl(\mu^{27/4}|b|+e_\mu
                  +\|\phi_\mu\|_{H^1_\delta}+|c_\mu|\bigr)\\
&\le C\bigl(\mu^{27/4}|b|+e_\mu\bigr).
\end{aligned}
\]
Here $|b|^2\le|b|$ on the chosen parameter set. This proves
\eqref{eq:correction-derivative}.

\step{Step 3. The reduced gradient.}
Since $\chi=0$ on a neighborhood of $\overline{B_{2\rho}(a)}$,
$\Phi_{T_\mu,\tau}$ and its phase derivatives vanish on the support
of every $(-\Delta)J_\alpha$, $0\le\alpha\le n$.
For $0\le\alpha\le n$ and $1\le i\le n$, define
\[
(M_\mu)_{\alpha0}
 =(J_\alpha,J_0)_*-\lambda(\partial_\lambda J_\alpha,\phi_\mu)_*,\qquad 
(M_\mu)_{\alpha i}
 =(J_\alpha,J_i)_*-(\partial_{b_i}J_\alpha,\phi_\mu)_*.
\]
By \eqref{eq:relative-first-variation} and \eqref{eq:full-projected},
using $\partial_\lambda B=J_0/\lambda$,
$\partial_{b_i}B=J_i$, and the derivatives of
$(\phi_\mu,J_\alpha)_*=0$, we obtain
\begin{equation}\label{eq:reduced-gradient}
\nabla_{(\lambda,b)}\widetilde\cJ_\mu
=\operatorname{diag}(\lambda^{-1},1,\ldots,1)M_\mu^T c_\mu.
\end{equation}
The matrix $M_\mu=G_\mu+O(\mu^{27/4})$ is invertible.
Its entries and the multipliers are $C^1$. Hence the ordinary
gradient is $C^1$, and the reduced energy is $C^2$.
\end{proof}

Since $M_\mu$ is invertible, \eqref{eq:reduced-gradient} shows that
$c_\mu=0$ exactly at the critical points of $\widetilde\cJ_\mu$.
We next find a nondegenerate critical point, which also yields
an invertible linearization at the new solution.

\section{Reduced energy and bubble parameters}\label{sec:projected-energy}

In this section, we choose the bubble parameters at a nondegenerate
minimum of the reduced energy. Proposition~\ref{prop:local-well}
identifies the minimum of the leading coefficient, and
Lemmas~\ref{lem:correction-hessian} and \ref{lem:pure-energy}
yield the Hessian comparison used in Proposition~\ref{prop:reduced-minimum}.

The leading quadratic metric energy is determined by
\begin{equation}\label{eq:F-definition}
F(\lambda,b):=\sum_{i,k,l=1}^n\int_{\R^n}
\Bigl[\frac12\cH_{il}\cH_{kl}\partial_{y_i}U_{\lambda,b}\partial_{y_k}U_{\lambda,b}
-\frac{n-2}{16(n-1)}(\partial_{y_l}\cH_{ik})^2U_{\lambda,b}^2\Bigr]\dd y.
\end{equation}
With the factor $1/2$ in front of the Dirichlet integral in the energy,
the quadratic metric coefficient is $F/2$.
Lemma~\ref{lem:correction-hessian} uses the quadratic vanishing in
\eqref{eq:center-source-bounds} to show that the projected correction
contributes $o(\mu^{27/2})$ to the parameter Hessian at $b=0$.

\begin{proposition}\label{prop:local-well}
For every $n\ge30$, there exists $s_n\in(9/10,11/10)$ such that
$(\sqrt{s_n},0)$ is a nondegenerate strict local minimum of the
function $F$ defined in \eqref{eq:F-definition}.
\end{proposition}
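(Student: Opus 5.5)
This is essentially the Brendle--Marques computation with $f(s)=80-50s+7s^2$. We plan to reduce $F$ to explicit one-variable integrals, locate the critical point in $\lambda$ at $b=0$, and then check the Hessian in both the $\lambda$ and $b$ directions.

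\textbf{Step 1 (radial reduction).} At $b=0$ we average over the sphere. The Weyl symmetries together with \eqref{eq:radial-gauge} give $\sum_l\cH_{il}\cH_{kl}y_iy_k=0$, because $x_iH_{ik}=0$. Since $\partial_iU_{\lambda,0}$ is proportional to $y_i$, the first term of $F$ vanishes at $b=0$. What remains is $-\frac{n-2}{16(n-1)}\int\sum(\partial_l\cH_{ik})^2U_{\lambda,0}^2$. We expand $\partial_l\cH_{ik}=2f'(|y|^2)y_lH_{ik}+f\,\partial_lH_{ik}$. The cross term is $4ff'\,y_lH_{ik}\partial_lH_{ik}=8ff'|H|^2$ by homogeneity. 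Angular averages express $|H|^2$ and $|\partial H|^2$ through $|W|^2$, giving
\[
F(\lambda,0)=-c_n|W|^2\int_0^\infty\Bigl[4f'(r^2)^2r^6+8ff'r^4+\kappa_n f^2r^2\Bigr]\Bigl(\frac{\lambda}{\lambda^2+r^2}\Bigr)^{n-2}r^{n-1}\dd r
\]
for an explicit dimensional constant $\kappa_n$. Scaling $r=\lambda s$ turns this into a polynomial in $\lambda^2$ of degree $6$, with coefficients given by Beta integrals $\int s^{2k+n-1}(1+s^2)^{2-n}\dd s$. These integrals converge precisely when $2k+n<2n-4$, that is $n>12+4=16$ for $k=6$; this is one place where a large dimension enters.

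\textbf{Step 2 (the $\lambda$ direction).} Write $F(\lambda,0)=P(\lambda^2)$. We then locate a root $s_n\in(9/10,11/10)$ of $P'$ with $P''(s_n)>0$. Ratios of Beta integrals are rational functions of $n$, so $P'(s)$ becomes an explicit cubic-type expression with coefficients rational in $n$. The coefficients $80,-50,7$ are designed, as in Brendle--Marques, so that $P'$ changes sign on $(0.9,1.1)$ for all $n$ in the range. We verify this by evaluating $P'(0.9)<0<P'(1.1)$ as rational functions of $n$, clearing denominators and checking positivity of a polynomial in $n$ for $n\ge30$. For the small values of $n$ this check is finite; for large $n$ it follows from the leading-order behavior. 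Convexity at the root follows the same way, or by showing that $P'$ is increasing on the interval. The only-$n\ge30$ caveat is that the Beta integrals and the signs must hold there; the asymptotic $n\to\infty$ limit gives a fixed polynomial whose root we check is near $1$.

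\textbf{Step 3 (the $b$ directions).} $F$ is even in $b$ under $y\mapsto-y$, since $\cH$ is even. Hence mixed derivatives $\partial_\lambda\partial_bF$ vanish at $b=0$, and it suffices to show $\partial_b^2F(\sqrt{s_n},0)$ is positive definite. We differentiate under the integral using $\partial_bU=-\partial_yU$, which after integration by parts in $y$ moves derivatives onto $\cH$. This produces quadratic forms $\sum_{j,m}A_{jm}b_jb_m$ where $A_{jm}$ is a combination of $\int\partial_j\partial_m(\cH\cH)\,\partial U\partial U$-type terms. By $O(n)$-invariance of the algebraic structure after averaging over rotations, and since $F$ is built from $W$ quadratically, $A=a_n|W|^2\mathrm{Id}+$(a term proportional to $\sum W_{jpqr}W_{mpqr}$). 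Here we must use Brendle--Marques's explicit formula, which expresses the $b$-Hessian as a linear combination of $\int$ of $f,f'$ against bubble powers, and then verify positivity at $\lambda^2=s_n$ via the same rational-in-$n$ Beta-integral evaluation.

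We expect Step 3 to be the main obstacle: the bookkeeping of the Weyl contractions, and the need for a sign that holds uniformly for all $n\ge30$. The first thing we will try is to import the Brendle--Marques identities verbatim for the contracted sums and reduce to checking a finite list of polynomial inequalities in $n$. These inequalities would be recorded in the coefficient tables of Appendix~\ref{app:coefficient-tables}.
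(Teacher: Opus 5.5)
Your plan is correct and follows essentially the paper's route: reduce $F(\lambda,0)$ to a Beta-weighted polynomial in $\lambda^2$, check signs of rational functions of $n$ on $[9/10,11/10]$, use evenness in $b$ to remove the mixed terms and the center gradient, and use the Brendle--Marques center-Hessian formula. The one point to sharpen is that $s_n$ is not explicit, so both the convexity in $\lambda$ and the sign governing the $b$-Hessian must be certified uniformly on the whole interval, not evaluated at $s_n$; the paper does this with Bernstein-basis certificates whose coefficients are polynomials in $n-30$ with positive coefficients, and it reduces the $b$-Hessian to the single inequality $K_n<0$ on the interval, because the non-scalar Weyl contraction is a positive semidefinite Gram matrix whose coefficient then has the favorable sign.
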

The proof is given in Subsection~\ref{app:coefficient-tables}.

We now estimate the change in reduced energy caused by the correction:
\[
D_\mu(\lambda,b):=\widetilde\cJ_\mu(\lambda,b)
-\cJ_{\gamma_\mu,V_\mu}(0,\widehat U_{\mu,\lambda,b}).
\]

\begin{lemma}\label{lem:correction-hessian}
For all sufficiently small $\mu$, the following estimates
\begin{equation}\label{eq:correction-hessian}
\|D^2_{\lambda,b}D_\mu(\lambda,b)\|
\le C\bigl(\mu^{27/2}|b|^2+\mu^{27/4}e_\mu+e_\mu^2\bigr)
\quad\text{and}\quad |\nabla D_\mu(\lambda,0)|\le Ce_\mu^2,
\end{equation}
hold uniformly for $(\lambda,b)\in K$ ($(\lambda,0)\in K$ for the second estimate).
\end{lemma}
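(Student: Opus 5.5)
Write $B=\widehat U_{\mu,\lambda,b}$ and $\Psi(\lambda,b):=(\tau_\mu,\phi_\mu)$. The plan is a second-order Taylor expansion of $\cJ:=\cJ_{\gamma_\mu,V_\mu}$ in the correction direction, around the uncorrected point $(0,B)$, combined with the equation satisfied by the corrected point. By Lemma~\ref{lem:relative-energy}, $\cJ$ is $C^2$ on $\mathcal U_\delta$, and
\[
D_\mu=\int_0^1 D\cJ(s\tau_\mu,B+s\phi_\mu)[\tau_\mu,\phi_\mu]\dd s .
\]
It is cleaner to expand around the corrected point. Set $Z:=Y_{\mu,\lambda,b}(\tau_\mu,\phi_\mu)$ and
\[
g(s):=\cJ\bigl((1-s)\tau_\mu,\,B+(1-s)\phi_\mu\bigr).
\]
Then $D_\mu=g(0)-g(1)=-\int_0^1 g'(s)\dd s$, and by \eqref{eq:relative-first-variation} $g'(0)=-\ip{\Fop_{\gamma_\mu}(Z)}{\tau_\mu\psi_{T_\mu+\tau_\mu}+\phi_\mu}$. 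By \eqref{eq:full-projected} this residual is $\sum_\alpha(c_\mu)_\alpha(-\Delta)J_\alpha$. The pairing with $\psi$ vanishes by support disjointness, and the pairing with $\phi_\mu$ vanishes by the orthogonality $(\phi_\mu,J_\alpha)_*=0$. Hence $g'(0)=0$ exactly. Writing $g(1)-g(0)=\int_0^1(1-s)g''(s)\dd s$ and using the bound on $D^2\cJ$ from Lemma~\ref{lem:nonlinear-map} and \eqref{eq:inserted-derivative-comparison}, uniform in $\mu$ on the relevant ball, gives
\[
D_\mu=\mathcal Q_\mu(\lambda,b),\qquad |\mathcal Q_\mu|\le C\|\Psi\|_{\cX_\delta}^2 .
\]
Here $\mathcal Q_\mu$ is a quadratic form in $\Psi$ whose coefficient operator is $\int_0^1(1-s)D\Fop_{\gamma_\mu}(\cdot)\dd s$ along the segment.

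To get the gradient bound at $b=0$, I would differentiate in $(\lambda,b)$ directly. In the representation $D_\mu=-\int_0^1 s\,\frac{d}{ds}[\ldots]$, the dependence on parameters enters through three places: $\Psi$, $B$, and the operator coefficients. Each term is bilinear in $\Psi$ and $D_{\lambda,b}\Psi$, or quadratic in $\Psi$ with a parameter derivative of the coefficient. The coefficients involve $(\cdot)_+^{p-1}$ evaluated at $V_\mu+B+\ldots$, and their parameter derivatives are controlled in $L^{n/2}$ via $|D_{\lambda,b}B|\le C(V_\mu+B)$ as in Step~2 of Proposition~\ref{prop:projected-solution}. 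This gives
\[
|\nabla D_\mu|\le C\|\Psi\|\bigl(\|\Psi\|+\|D\Psi\|\bigr).
\]
At $b=0$, \eqref{eq:correction-bounds} and \eqref{eq:correction-derivative} both reduce to $Ce_\mu$, which yields $Ce_\mu^2$. A cleaner alternative is to use \eqref{eq:reduced-gradient} for $\nabla\widetilde\cJ_\mu$, subtract the gradient of $\cJ(0,B)$, namely $\ip{\Fop(V_\mu+B)}{D_{\lambda,b}B}$, and expand $\Fop(Z)-\Fop(V_\mu+B)$ linearly in $\Psi$. The linear term $\ip{\widehat\cL_\mu\Psi}{D_{\lambda,b}B}$ becomes $\ip{\Psi}{\cL_{\mu}J_\alpha}$, which is bounded by $\|\Psi\|\,\|D\cR_\mu\|$ through the Jacobi identities in Step~2 of Lemma~\ref{lem:projected-inverse}. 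Combined with the orthogonality corrections, this is again quadratic.

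For the Hessian, I would differentiate once more and collect terms of the form $\|\Psi\|\|D^2\Psi\|$, $\|D\Psi\|^2$, and $\|\Psi\|\|D\Psi\|$ with coefficient derivatives, plus $\|\Psi\|\,\|D^2\cR_\mu\|$-type terms coming from the linear expansion. By \eqref{eq:residual-bounds}, $\|D^2\cR_\mu\|\le C(\mu^{27/4}+e_\mu)$, so $\|\Psi\|\,\|D^2\cR_\mu\|\lesssim(\mu^{27/4}|b|^2+e_\mu)(\mu^{27/4}+e_\mu)$. This is within $\mu^{27/2}|b|^2+\mu^{27/4}e_\mu+e_\mu^2$. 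Similarly, $\|D\Psi\|^2\lesssim\mu^{27/2}|b|^2+e_\mu^2+\mu^{27/4}e_\mu$, which is fine.

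The main obstacle is the term $\|\Psi\|\,\|D^2\Psi\|$. Proposition~\ref{prop:projected-solution} gives no estimate on second parameter derivatives of $(\tau_\mu,\phi_\mu,c_\mu)$; it only gives $C^1$. I would avoid $D^2\Psi$ altogether by working with the gradient formula \eqref{eq:reduced-gradient}. That formula expresses $\nabla\widetilde\cJ_\mu$ through $c_\mu$ and $M_\mu$, which are $C^1$ with bounds from \eqref{eq:correction-derivative}. The Hessian of $\widetilde\cJ_\mu$ is then $D(M_\mu^Tc_\mu)$. The difficulty is that $Dc_\mu$ itself is only $O(\mu^{27/4}|b|+e_\mu)$, which is not quadratically small. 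To fix this, I would decompose
\[
c_\mu=c^{(0)}_\mu+c^{(1)}_\mu .
\]
Here $c^{(0)}_\mu:=G_\mu^{-1}\bigl(\ip{\cR_\mu}{J_\beta}\bigr)_\beta$ is the multiplier of the uncorrected residual, and it reproduces the gradient of $\cJ(0,B)$ up to the same $M$-versus-$G$ matrix. The remainder $c^{(1)}_\mu$ equals $G^{-1}\ip{\widehat\cL_\mu\Psi+\text{quadratic}}{J}$. Its derivative is estimated by moving $\widehat\cL_\mu$ onto $J_\beta$ (giving $D\cR_\mu$, $D^2\cR_\mu$) and using only $\Psi$ and $D\Psi$. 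This yields the product bounds above.

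I expect the bookkeeping of the $M_\mu-G_\mu=O(\|\phi_\mu\|)$ factors, and of the $C^1$-only regularity of the Nemytskii term when $p-1<1$, to be the delicate part. The latter needs differentiating $(\cdot)_+^{p-1}$ on the bubble support, where $V_\mu+B$ is bounded below so that the nonlinearity is smooth there.
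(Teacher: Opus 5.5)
Your final route is correct and essentially the paper's. Rewriting $\nabla D_\mu=\operatorname{diag}(\lambda^{-1},1,\ldots,1)\bigl(M_\mu^Tc_\mu-G_\mu c^{(0)}_\mu\bigr)$ via \eqref{eq:reduced-gradient}, splitting $c_\mu=c^{(0)}_\mu+c^{(1)}_\mu$, and moving $\cL_{\mu,\lambda,b}$ onto the Jacobi fields reproduces exactly the identity \eqref{eq:correction-gradient-identity}; after the diagonal rescaling, $M_\mu-G_\mu$ supplies the terms $-\sum_\alpha(c_\mu)_\alpha(D_{\lambda,b}J_\alpha,\phi_\mu)_*$. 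Differentiating that identity once uses only first parameter derivatives of $\phi_\mu$ and $c_\mu$, which is how the paper obtains both bounds.

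The preliminary Taylor expansion and the ``direct differentiation'' of its remainder are superfluous, and as written they are not justified. They differentiate $(\cdot)_+^{p-1}$ at $V_\mu+\widehat U_{\mu,\lambda,b}+s\phi_\mu$, which has no positive lower bound. In the identity route, $(\cdot)_+^{p}$ is differentiated at the perturbed point and $(\cdot)^{p-1}$ only at $V_\mu+\widehat U_{\mu,\lambda,b}$, which is bounded below, so drop the first route.
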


\begin{proof}
\step{Step 1. The gradient identity.}
Write $\phi=\phi_\mu$ and retain the abbreviations $B$, $J_\alpha$,
and $\cR_\mu$ fixed before Proposition~\ref{prop:projected-solution}.
On the support of $B$ and its parameter derivatives, we have
\[
\cN_\mu(0,\phi)
=n(n-2)\bigl\{(V_\mu+B+\phi)_+^p-(V_\mu+B)^p
-p(V_\mu+B)^{p-1}\phi\bigr\}.
\]
Both $\Phi_{T_\mu,\tau_\mu}$ and $\psi_{T_\mu+\tau_\mu}$ vanish
on a neighborhood of $\overline{B_{2\rho}(a)}$.
Let $Y=Y_{\mu,\lambda,b}(\tau_\mu,\phi_\mu)$ and
$c_\alpha=(c_\mu)_\alpha$. Applying
\eqref{eq:relative-first-variation}, with parameter derivatives and
pairings taken componentwise, gives
\[
\begin{aligned}
D_{\lambda,b}D_\mu
={}&\langle\Fop_{\gamma_\mu}(Y)-\Fop_{\gamma_\mu}(V_\mu+B),
                    D_{\lambda,b}B\rangle\\
&+\sum_{\alpha=0}^n c_\alpha(J_\alpha,D_{\lambda,b}\phi)_*.
\end{aligned}
\]
Differentiating $(\phi,J_\alpha)_*=0$ and using the equation on
$B_{2\rho}(a)$, we obtain
\[
(J_\alpha,D_{\lambda,b}\phi)_*=-(D_{\lambda,b}J_\alpha,\phi)_*,
\qquad
\Fop_{\gamma_\mu}(Y)-\Fop_{\gamma_\mu}(V_\mu+B)
 =\cL\phi-\cN_\mu(0,\phi),
\]
where $\cL=\cL_{\mu,\lambda,b}$.
Integration by parts against each component of $D_{\lambda,b}B$,
using $\cL D_{\lambda,b}B=D_{\lambda,b}\cR_\mu$, yields
\begin{equation}\label{eq:correction-gradient-identity}
D_{\lambda,b}D_\mu
=\ip{\phi}{D_{\lambda,b}\cR_\mu}
-\ip{\cN_\mu(0,\phi)}{D_{\lambda,b}B}
-\sum_{\alpha=0}^n c_\alpha(D_{\lambda,b}J_\alpha,\phi)_*.
\end{equation}

\step{Step 2. Bounds for the derivatives.}
For every $t\in\R$, the following scalar estimates hold:
\[
|(1+t)_+^p-1-pt|\le Ct^2,\quad
|(1+t)_+^{p-1}-1|\le C|t|,\quad
|(1+t)_+^{p-1}-1-(p-1)t|\le Ct^2.
\]
The derivative $D_{\lambda,b}\cN_\mu(0,\phi_\mu)$ acts on both
$B$ and $\phi_\mu$, while $V_\mu$ is independent of $(\lambda,b)$.
Thus
\[
\begin{aligned}
\frac{D_{\lambda,b}\cN_\mu(0,\phi)}{n(n+2)}
={}&\bigl[(V_\mu+B+\phi)_+^{p-1}-(V_\mu+B)^{p-1}\bigr]
                                         D_{\lambda,b}\phi\\
&+\bigl[(V_\mu+B+\phi)_+^{p-1}-(V_\mu+B)^{p-1}
       -(p-1)(V_\mu+B)^{p-2}\phi\bigr]D_{\lambda,b}B.
\end{aligned}
\]
The explicit bubble satisfies
$|D_{\lambda,b}B|+|D^2_{\lambda,b}B|\le C(V_\mu+B)$.
Consequently, on its fixed support,
\[
\begin{aligned}
|D_{\lambda,b}\cN_\mu(0,\phi)|\,|D_{\lambda,b}B|
 +|\cN_\mu(0,\phi)|\,|D^2_{\lambda,b}B|
\le C(V_\mu+B)^{p-1}
       \bigl(|\phi|\,|D_{\lambda,b}\phi|+|\phi|^2\bigr).
\end{aligned}
\]
The $L^{n/2}$ norm of $(V_\mu+B)^{p-1}$ on this support is uniformly
bounded. By H\"older's inequality and the critical Sobolev embedding,
we obtain
\[
\begin{aligned}
|\ip{\cN_\mu(0,\phi)}{D_{\lambda,b}B}|
 &\le C\|\phi\|_{H^1_\delta}^2,\\
\bigl\|D_{\lambda,b}\ip{\cN_\mu(0,\phi)}{D_{\lambda,b}B}\bigr\|
 &\le C\bigl(\|\phi\|_{H^1_\delta}^2
       +\|\phi\|_{H^1_\delta}\|D_{\lambda,b}\phi\|_{H^1_\delta}\bigr).
\end{aligned}
\]
Differentiating
\eqref{eq:correction-gradient-identity} involves only
the first parameter derivatives of $\phi_\mu$ and $c_\mu$.
Using the preceding estimates and the bounds for the first
two parameter derivatives of $B$, $\cR_\mu$, and $J_\alpha$,
we obtain
\[
\begin{aligned}
\|D^2_{\lambda,b}D_\mu\|\le C\Bigl\{
 &\|D_{\lambda,b}\phi\|_{H^1_\delta}
       \|D_{\lambda,b}\cR_\mu\|_{H^{-1}_\delta}
 +\|\phi\|_{H^1_\delta}\|D^2_{\lambda,b}\cR_\mu\|_{H^{-1}_\delta}
 +\|\phi\|_{H^1_\delta}^2\\
 &+\|\phi\|_{H^1_\delta}\|D_{\lambda,b}\phi\|_{H^1_\delta}
 +|D_{\lambda,b}c_\mu|\|\phi\|_{H^1_\delta}
 +|c_\mu|\bigl(\|\phi\|_{H^1_\delta}
              +\|D_{\lambda,b}\phi\|_{H^1_\delta}\bigr)\Bigr\}.
\end{aligned}
\]
Norms of parameter derivative collections are maximum norms over
their components. Substituting
\eqref{eq:residual-bounds}--\eqref{eq:correction-derivative}
proves the first estimate in \eqref{eq:correction-hessian}.
At $b=0$, the quantities $\phi$, $D_{\lambda,b}\cR_\mu$, and $c_\mu$
are $O(e_\mu)$. Every term in
\eqref{eq:correction-gradient-identity} is therefore $O(e_\mu^2)$,
which proves the second estimate in \eqref{eq:correction-hessian}.
\end{proof}

It remains to expand the relative energy before adding the projected
correction. Let
\[
\cJ_0(U)=\frac12\int_{\R^n}|DU|^2\dd y
-\frac{(n-2)^2}{2}\int_{\R^n}(U_+)^{p+1}\dd y.
\]
The energy of every $U_{\lambda,b}$ is $\cJ_0(U_{1,0})$.

\begin{lemma}\label{lem:pure-energy}
As $\mu\downarrow0$, with the insertion radius $\rho$ fixed
and $R=\rho/\mu$,
\begin{equation}\label{eq:pure-energy-C2}
\Big\|\cJ_{\gamma_\mu,V_\mu}(0,\widehat U_{\mu,\cdot,\cdot})
-\cJ_0(U_{1,0})-\frac{\mu^{27/2}}2F\Big\|_{C^2(K)}
\le C\bigl(\mu^{(n-2)/2}+\mu^{n-2}+\mu^{27/2}R^{14-n}
+\mu^{81/4}\bigr)=o(\mu^{27/2}).
\end{equation}
\end{lemma}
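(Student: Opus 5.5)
The plan is to write $\cJ_{\gamma_\mu,V_\mu}(0,B)$, with $B=\widehat U_{\mu,\lambda,b}$, as an explicit finite expression and split it into a pure bubble part, a metric part, and an interaction part. Since $\tau=0$ and $B$ is supported in $\overline{B_{2\rho}(a)}$, where $\gamma_\mu$ is Euclidean outside $B_\rho(a)$ and the phase cutoff vanishes, the phase integral in \eqref{eq:relative-energy} drops out. The $\varepsilon$-limit is then trivial:
\[
\cJ_{\gamma_\mu,V_\mu}(0,B)=E(V_\mu+B)-E(V_\mu),
\]
with everything localized to $B_{2\rho}(a)$. Expanding the quadratic part and using the equation $-L_{\gamma_\mu}V_\mu=n(n-2)V_\mu^p$, I get
\[
\cJ=\tfrac12 Q_{\gamma_\mu}(B,B)-\tfrac{(n-2)^2}{2}\int B^{p+1}
-\tfrac{(n-2)^2}{2}\int\bigl[(V_\mu+B)^{p+1}-V_\mu^{p+1}-B^{p+1}-(p+1)V_\mu^pB\bigr].
\]
The cross term $Q_{\gamma_\mu}(V_\mu,B)$ cancels exactly against the linear term $(p+1)V_\mu^pB$, so no separate interaction term survives. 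Each piece will be bounded in $C^2(K)$; the parameter derivatives fall only on $B$, and $|\partial^\beta_{(\lambda,b)}B|\le CB$ for $|\beta|\le2$.

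\textbf{Interaction term.} As in Lemma~\ref{lem:residual}, Step 2, the integrand together with its first two parameter derivatives is bounded by $C\min\{V_\mu^2B^{p-1},V_\mu^{p-1}B^2\}$, and $V_\mu$ is bounded above and below on the ball. I split at $s=|x-a-\mu b|\sim\mu$.
\begin{itemize}
\item For $s\lesssim\mu$, use $B^{p-1}\le C\mu^{-2}$, which contributes $\mu^{n-2}$.
\item For $s\gtrsim\mu$, use $B^2\le C\mu^{n-2}s^{4-2n}$, which integrates to $C\mu^{n-2}\mu^{4-n}$.
\end{itemize}
The second piece looks like it needs a sharper split, so I will instead use $V^2B^{p-1}$ on the inner part and $B^2$ on the outer part. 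The expected outcome is $O(\mu^{(n-2)/2})$ for the terms linear in $B$ near the boundary of the bubble; this gives the $\mu^{(n-2)/2}$ and $\mu^{n-2}$ terms in \eqref{eq:pure-energy-C2}.

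\textbf{Pure bubble with flat metric.} Rescale $x=a+\mu y$. Then $\tfrac12\int|D(\zeta U)|^2-\tfrac{(n-2)^2}{2}\int(\zeta U)^{p+1}$ equals $\cJ_0(U_{\lambda,b})=\cJ_0(U_{1,0})$ up to cutoff errors of order $\mu^{n-2}$, uniformly in $C^2(K)$. The scalar curvature of $\gamma_\mu$ only matters inside $B_\rho(a)$ and is handled with the metric term below.

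\textbf{Metric term (main step).} Write $\gamma_\mu=e^{h}$ with $h=\mu^{27/4}\eta(|y|/R)\cH(y)$ in rescaled coordinates, and expand
\[
\tfrac12\int\bigl(|DU|^2_{\gamma_\mu}-|DU|^2+c(n)\Scal_{\gamma_\mu}U^2\bigr)
\]
in powers of $h$.
\begin{itemize}
\item The linear term vanishes by Lemma~\ref{lem:linear-cancellation}, since the cutoff is radial and \eqref{eq:radial-gauge} applies.
\item The quadratic term gives $\mu^{27/2}F/2$, using the Brendle--Marques second variation formula for $\Scal$ of a trace-free, divergence-free $h$: it is $-\tfrac14|\partial h|^2$ plus divergence terms, which combine into \eqref{eq:F-definition}. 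This is the same computation as in \cite{BrendleMarques2009}.
\item The cutoff region $|y|\sim R$ contributes $\mu^{27/2}\int_{|y|>R}|y|^{12}|y|^{2-2n}\sim\mu^{27/2}R^{14-n}$.
\item The cubic remainder is $O(\mu^{81/4})$, since $|h|^3|DU|^2$ is integrable for $n\ge30$ (the integrand decays like $|y|^{18+2-2n}$).
\end{itemize}

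Parameter derivatives pass through all these integrals because the integrands are explicit and smooth in $(\lambda,b)$ with the same decay. This proves \eqref{eq:pure-energy-C2}.

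The main obstacle is the quadratic expansion: matching the exact constant $-\frac{n-2}{16(n-1)}$ and checking that no divergence terms survive. I would import this directly from \cite{BrendleMarques2009}, using the gauge identities \eqref{eq:radial-gauge}.
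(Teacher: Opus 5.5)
Your overall route is the paper's. With $\tau=0$ and $B=\widehat U_{\mu,\lambda,b}$ supported in $B_{2\rho}(a)$, $\cJ_{\gamma_\mu,V_\mu}(0,B)$ is a finite energy increment. Testing the equation of $V_\mu$ against $B$ removes the cross term $Q_{\gamma_\mu}(V_\mu,B)$; your displayed decomposition is exactly the paper's Step~1 identity, since $n(n-2)=\frac{(n-2)^2}{2}(p+1)$. Your metric part follows Steps~2--3 of the paper: the linear term is killed by Lemma~\ref{lem:linear-cancellation}, the quadratic term gives $\mu^{27/2}F/2$ via Brendle--Marques, the cutoff errors are $\mu^{27/2}R^{14-n}$ and $O(R^{2-n})=O(\mu^{n-2})$, and the cubic error is $O(\mu^{81/4})$. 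Two small corrections. In the cubic error you should also include the curvature remainder $(|\bar h|^2|D^2\bar h|+|\bar h||D\bar h|^2)U_{\lambda,b}^2$, which has the same majorant $|y|^{20-2n}$. And the $\mu^{n-2}$ term in the statement comes from the flat cutoff error, not from the interaction.

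The interaction paragraph, however, does not work as written. The majorant $C\min\{V_\mu^2B^{p-1},V_\mu^{p-1}B^2\}$ is false in the core $\{B\ge V_\mu\}$, i.e.\ for $|x-a-\mu b|\lesssim\sqrt\mu$. There
$(V_\mu+B)^{p+1}-V_\mu^{p+1}-B^{p+1}-(p+1)V_\mu^pB=(p+1)V_\mu B^p+O(V_\mu^2B^{p-1}+V_\mu^pB)$,
which exceeds $V_\mu^2B^{p-1}$ by the factor $B/V_\mu$. The $B$-derivative of the integrand, $(p+1)[(V_\mu+B)^p-V_\mu^p-B^p]$, lies between $0$ and $C\min\{V_\mu B^{p-1},V_\mu^{p-1}B\}$; integrating in $B$ gives the correct majorant $C\min\{V_\mu^{p-1}B^2,V_\mu B^p\}$.

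Your first split at $s\sim\mu$ also fails: the outer region gives $\mu^{n-2}\int_\mu s^{3-n}\,ds\sim\mu^2$, which is not $o(\mu^{27/2})$. The split has to be made at $s\sim\sqrt\mu$, where $B\sim V_\mu$. Since the integrand is nonnegative and is at least $cV_\mu B^p$ where $B\ge 2V_\mu$, the interaction is genuinely of size $\mu^{(n-2)/2}$. Your majorant would have produced $\mu^{n/2}$, which is inconsistent with the order you then assert. That order is still $o(\mu^{27/2})$ because $(n-2)/2-27/2=(n-29)/2>0$, but it must be derived.

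The paper avoids any splitting. It bounds $n(n-2)V_\mu^pB$ and $0\le(V_\mu+B)^{p+1}-V_\mu^{p+1}-B^{p+1}\le C(V_\mu^pB+V_\mu B^p)$, together with their first two parameter derivatives, by $C(B+B^p)$. It then uses $\int_{B_{2\rho}(a)}(B+B^p)\,dx=O(\mu^{(n-2)/2})$.
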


\begin{proof}
The bubble is compactly supported and the phase is unchanged. Hence the
relative energy in \eqref{eq:pure-energy-C2} is a finite energy increment.
The bubble vanishes near both boundary spheres. The outer Robin
term and phase subtraction cancel in the difference, leaving the bulk
energy for $\gamma_\mu$ on $B_{2\rho}(a)$, with volume element $\dd x$.

\step{Step 1. The background interaction.}
In the same local Euclidean representation as in
Lemma~\ref{lem:residual}, write $A=V_\mu>0$ and
$B=\widehat U_{\mu,\lambda,b}=\zeta U_{\mu\lambda,a+\mu b}\ge0$. The background
$A$ is independent of $(\lambda,b)$ and has uniform positive upper
and lower bounds on $B_{2\rho}(a)$. Testing $-L_{\gamma_\mu}A=n(n-2)A^p$ with $B$ gives
\[
\int_{B_{2\rho}(a)}
\bigl(\langle DA,DB\rangle_{\gamma_\mu}
+c(n)\Scal_{\gamma_\mu}AB\bigr)\dd x
=n(n-2)\int_{B_{2\rho}(a)}A^pB\,\dd x.
\]
Thus the difference between the
full increment and the pure bubble energy is
\[
n(n-2)\int_{B_{2\rho}(a)} A^pB\dd x
 -\frac{(n-2)^2}{2}\int_{B_{2\rho}(a)}
    \bigl((A+B)^{p+1}-A^{p+1}-B^{p+1}\bigr)\dd x.
\]

The volume form in these variables is Euclidean because
$\operatorname{tr}h_\mu=0$. Using
\[
0\le(A+B)^{p+1}-A^{p+1}-B^{p+1}
\le C(A^pB+AB^p)
\]
and $|\partial_{(\lambda,b)}^\beta B|\le CB$ for $|\beta|\le2$, we bound
this expression and its first two parameter derivatives by
$C\int_{B_{2\rho}(a)}(B+B^p)\dd x=O(\mu^{(n-2)/2})$.
To estimate the parameter derivatives, use the ratios $B/A$ on
$\{B\le A\}$ and $A/B$ on $\{B\ge A\}$, as in the proof of
Lemma~\ref{lem:residual}. At a zero of the cutoff, $B$ and its parameter
derivatives vanish.

\step{Step 2. The metric contribution.}
We compute the metric contribution in the coordinates $x=a+\mu y$. On the bubble scale let
$\bar h(y):=h_\mu(a+\mu y)=\mu^{27/4}\cH_R(y)$, where
$\cH_R=\eta(|y|/R)\cH$ and $R=\rho/\mu$.
Since the trace and divergence of $\cH_R$ vanish, the inverse metric
and scalar curvature expansions in
Brendle--Marques~\cite[Proposition~4]{BrendleMarques2009} reduce to
\begin{align*}
(e^{-\bar h})_{ik}
 &=\delta_{ik}-(\bar h)_{ik}
   +\frac12\sum_{l=1}^n(\bar h)_{il}(\bar h)_{lk}+O(|\bar h|^3),
   \qquad 1\le i,k\le n,\\
\Scal_{e^{\bar h}}
 &=-\frac14|D\bar h|^2
   +O(|\bar h|^2|D^2\bar h|+|\bar h||D\bar h|^2).
\end{align*}

In the general curvature formula, the quadratic divergence terms vanish
because $\sum_{i=1}^n\partial_{y_i}(\bar h)_{ik}=0$ for $1\le k\le n$.
On the support of $\bar h$, we have
$\mu^{(n-2)/2}\widehat U_{\mu,\lambda,b}(a+\mu y)=U_{\lambda,b}(y)$.
Equation~\eqref{eq:linear-energy-zero} eliminates the linear metric energy. The quadratic part is
\[
\mu^{27/2}\bigg\{
 \frac14\sum_{i,k,l=1}^n\int_{\R^n}
    (\cH_R)_{il}(\cH_R)_{kl}\partial_{y_i}U_{\lambda,b}\partial_{y_k}U_{\lambda,b}\dd y
 -\frac{c(n)}8\int_{\R^n}|D\cH_R|^2U_{\lambda,b}^2\dd y
\bigg\}.
\]
The corresponding expression with $\cH$ in place of $\cH_R$ is
$\mu^{27/2}F(\lambda,b)/2$.

\step{Step 3. The remainder terms.}
In the $y$ coordinates, put $s=|y|$. For $s\ge1$, the quadratic energy integrands in Step 2 have radial
majorant $Cs^{13-n}$, also after two parameter derivatives.
Derivatives of the radial cutoff have the same order on its transition
annulus. The error from removing this cutoff is bounded by
$C\mu^{27/2}R^{14-n}$ in $C^2(K)$.

The cubic error and its first two parameter derivatives are bounded by
\[
C\mu^{81/4}\left(1+\int_1^R s^{19-n}\dd s\right)
\le C\mu^{81/4},
\]
since $n\ge30$. Here the two majorants before radial integration are
$|\cH_R|^3|DU_{\lambda,b}|^2$ and
$(|\cH_R|^2|D^2\cH_R|+|\cH_R||D\cH_R|^2)U_{\lambda,b}^2$.
Since $\det(e^{\bar h})=1$, the nonlinear energy integral is unchanged.

Finally, replacing $U_{\lambda,b}$ by $\zeta(a+\mu y)U_{\lambda,b}$ in
its flat energy produces an error of $CR^{2-n}=O(\mu^{n-2})$, also after two parameter
derivatives: use $U=O(s^{2-n})$, $DU=O(s^{1-n})$ and the cutoff derivative
bound $C/R$. Adding the background, quadratic cutoff, cubic, and flat
cutoff errors proves \eqref{eq:pure-energy-C2}. Every error divided by
$\mu^{27/2}$ tends to zero as $\mu\downarrow0$, since
$(n-2)/2-27/2=(n-29)/2>0$.
\end{proof}

\begin{proposition}\label{prop:reduced-minimum}
There are $r,c,\mu_0>0$, with
$\overline{B_r(\sqrt{s_n},0)}\subset\operatorname{int}K$, such that, for $0<\mu<\mu_0$,
$\widetilde\cJ_\mu$ has a unique critical point
$(\lambda_\mu,b_\mu)$ in the ball $B_r(\sqrt{s_n},0)$.
It is a nondegenerate strict local minimum, and
\begin{equation}\label{eq:reduced-minimum}
(\lambda_\mu,b_\mu)\to(\sqrt{s_n},0)
\quad\text{as }\mu\downarrow0,\qquad
D^2\widetilde\cJ_\mu\ge c\mu^{27/2}I
\quad\text{on }B_r(\sqrt{s_n},0).
\end{equation}
\end{proposition}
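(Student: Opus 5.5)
We combine Lemma~\ref{lem:pure-energy}, Lemma~\ref{lem:correction-hessian} and Proposition~\ref{prop:local-well} through the normalized functional
\[
\mathcal G_\mu(\lambda,b):=\mu^{-27/2}\bigl(\widetilde\cJ_\mu(\lambda,b)-\cJ_0(U_{1,0})\bigr)
=\tfrac12F(\lambda,b)+\mu^{-27/2}D_\mu(\lambda,b)+o(1)_{C^2(K)},
\]
where the $o(1)$ term is the error from \eqref{eq:pure-energy-C2}. By Proposition~\ref{prop:local-well} there are $r_1,c_1>0$ with $D^2F\ge 4c_1I$ on $B_{r_1}(\sqrt{s_n},0)$. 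We choose this ball inside $\operatorname{int}K$.

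Divide the first estimate in \eqref{eq:correction-hessian} by $\mu^{27/2}$. Since $e_\mu=o(\mu^{27/4})$ by Lemma~\ref{lem:residual}, we get
\[
\|\mu^{-27/2}D^2D_\mu\|\le C|b|^2+o(1).
\]
The $C|b|^2$ term does not vanish as $\mu\to0$. It is absorbed by shrinking the radius: pick $r\le r_1$ with $Cr^2\le c_1$, which is the stage-dependent shrinking mentioned in Step~2 of the proof of Theorem~\ref{thm:main}. Then for $\mu<\mu_0$ we have $D^2\mathcal G_\mu\ge 2c_1-c_1-o(1)\ge c_1/2$ on $B_r(\sqrt{s_n},0)$. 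This gives the Hessian bound in \eqref{eq:reduced-minimum} with $c=c_1/2$. Note that $\widetilde\cJ_\mu$ is $C^2$ by Proposition~\ref{prop:projected-solution}.

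For existence, we control the gradient. By the second estimate in \eqref{eq:correction-hessian}, $\mu^{-27/2}\nabla D_\mu(\lambda,0)=O(\mu^{-27/2}e_\mu^2)=o(1)$. Combined with the $C^2$ control of the other terms, this gives
\[
\nabla\mathcal G_\mu(\sqrt{s_n},0)=\tfrac12\nabla F(\sqrt{s_n},0)+o(1)=o(1).
\]
Uniform convexity on the ball then finishes the argument. We have $\langle\nabla\mathcal G_\mu(z),z-z_0\rangle\ge \tfrac{c}{1}|z-z_0|^2-o(1)|z-z_0|$, with $z_0=(\sqrt{s_n},0)$, so this quantity is positive on the sphere $|z-z_0|=r'$ for any fixed $r'<r$ and small $\mu$. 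Hence $\mathcal G_\mu$ attains its minimum over $\overline{B_{r'}}$ in the interior, which gives a critical point. Strict convexity on $B_r$ gives uniqueness and shows it is a nondegenerate strict local minimum.

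For convergence, the estimate $|z_\mu-z_0|\le c^{-1}|\nabla\mathcal G_\mu(z_0)|=o(1)$ shows $(\lambda_\mu,b_\mu)\to(\sqrt{s_n},0)$. Uniqueness in the full ball $B_r$ follows from strict monotonicity of $\nabla\mathcal G_\mu$ there.

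The main obstacle is the Hessian step. The correction Hessian is not $o(\mu^{27/2})$ uniformly on $K$, only $O(\mu^{27/2}|b|^2)$. The argument therefore relies on working in a $\mu$-independent small ball around $b=0$ and on the quadratic vanishing of the metric source in $b$ from \eqref{eq:center-source-bounds}. It is also necessary to check that the gradient at the center is $o(1)$ after normalization, which uses $e_\mu^2=o(\mu^{27/2})$. That holds since $e_\mu=o(\mu^{27/4})$ for $n\ge30$.
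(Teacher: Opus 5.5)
Your proposal is correct and follows the paper's proof essentially step for step: the normalized comparison $\|\mu^{-27/2}D^2\widetilde\cJ_\mu-\tfrac12D^2F\|\le C|b|^2+o(1)$, a $\mu$-independent choice of radius with $Cr^2$ dominated by the lower bound for $\tfrac12D^2F$, smallness of the normalized gradient at $(\sqrt{s_n},0)$ via $e_\mu^2=o(\mu^{27/2})$, and the monotonicity argument on a sphere, which gives an interior minimum, uniqueness, nondegeneracy and convergence. The differences are cosmetic. You run the boundary argument on a sphere of radius $r'<r$, whereas the paper uses radius $r$ directly after arranging the Hessian bound on the closed ball. You should also take $r<r_1$ strictly, so that $\overline{B_r(\sqrt{s_n},0)}\subset\operatorname{int}K$ as the statement requires.
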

\begin{proof}
\step{Step 1. The Hessian bound.}
Combining Lemmas~\ref{lem:correction-hessian} and \ref{lem:pure-energy}, we obtain
\[
\left\|\mu^{-27/2}D^2\widetilde\cJ_\mu
-\frac12D^2F\right\|\le C|b|^2+o(1).
\]
At the center, the gradient bound in \eqref{eq:correction-hessian} and
\eqref{eq:pure-energy-C2} imply
$\mu^{-27/2}\nabla\widetilde\cJ_\mu(\sqrt{s_n},0)\to0$.
Here $e_\mu=o(\mu^{27/4})$ by Lemma~\ref{lem:residual}.
By Proposition~\ref{prop:local-well} and continuity of $D^2F$, choose
$c>0$ and then $r>0$ so that
\[
\overline{B_r(\sqrt{s_n},0)}\subset\operatorname{int}K,
\qquad \tfrac12D^2F\ge3cI
\quad\text{on }\overline{B_r(\sqrt{s_n},0)},\qquad Cr^2\le c.
\]
Fix this parameter radius for the current background and cutoff.
The bound $Cr^2\le c$ controls $C|b|^2$; we then decrease $\mu$ to
control the remaining $o(1)$ term.
Keep $r$ fixed and take $\mu_0$ sufficiently small. Then, for
$0<\mu<\mu_0$,
\[
D^2\widetilde\cJ_\mu\ge c\mu^{27/2}I
\quad\text{on }B_r(\sqrt{s_n},0),\qquad
|\nabla\widetilde\cJ_\mu(\sqrt{s_n},0)|
\le\tfrac12cr\mu^{27/2}.
\]
\step{Step 2. The critical point.}
For $|(\lambda,b)-(\sqrt{s_n},0)|=r$, integrating the Hessian bound along the line segment,
we obtain
\[
\nabla\widetilde\cJ_\mu(\lambda,b)\cdot(\lambda-\sqrt{s_n},b)
\ge c\mu^{27/2}r^2-r|\nabla\widetilde\cJ_\mu(\sqrt{s_n},0)|
\ge\frac c2\mu^{27/2}r^2>0.
\]
A minimum on the closed ball is therefore attained in its interior.
Strict convexity proves uniqueness, and the positive lower bound for
$D^2\widetilde\cJ_\mu$ proves nondegeneracy. At the critical point, the Hessian bound gives
\[
|(\lambda_\mu,b_\mu)-(\sqrt{s_n},0)|
\le c^{-1}\mu^{-27/2}
|\nabla\widetilde\cJ_\mu(\sqrt{s_n},0)|
\to0
\quad\text{as }\mu\downarrow0.
\]
\end{proof}

\section{Proof of the insertion proposition}\label{sec:insertion-proof}

In this section, we obtain a positive solution from the reduced
critical point and prove that its augmented linearization is invertible.
These properties complete Proposition~\ref{prop:one-bubble} and allow
the induction to continue.

Fix the admissible pair and ball from Proposition~\ref{prop:one-bubble}.
Evaluate $(\tau_\mu,\phi_\mu)$ at the critical point
$(\lambda_\mu,b_\mu)$ from Proposition~\ref{prop:reduced-minimum}, and set
\begin{equation}\label{eq:one-bubble}
Y_\mu:=V_\mu+\widehat U_{\mu,\lambda_\mu,b_\mu}
               +\Phi_{T_\mu,\tau_\mu}+\phi_\mu, \quad \widehat T_\mu:=T_\mu+\tau_\mu.
\end{equation}
\begin{proof}[Proof of Proposition~\ref{prop:one-bubble}]
\step{Step 1. The metric, phase, and exact equation.}
The construction \eqref{eq:metric-profile} and estimate
\eqref{eq:metric-small} give the metric support, volume, and convergence.
Lemma~\ref{lem:background-continuation} gives $T_\mu\to T$, while
\eqref{eq:correction-bounds} gives $\tau_\mu\to0$.
Hence $\widehat T_\mu\to T$ as $\mu\downarrow0$.
At the critical point, \eqref{eq:reduced-gradient} and invertibility
of $M_\mu$ give $c_\mu=0$. Hence $\Fop_{\gamma_\mu}(Y_\mu)=0$ in $H^{-1}_\delta$.
The equation also holds against smooth test functions vanishing
near $0$ and having antipodally even boundary traces.

\step{Step 2. Positivity, regularity, and exterior convergence.}
Lemma~\ref{lem:ground-state}, applied to the continued positive
solution $V_\mu$, makes $Q_{\gamma_\mu}$ coercive.
For each fixed small $\mu$,
$A_\mu=V_\mu+\widehat U_{\mu,\lambda_\mu,b_\mu}+\Phi_{T_\mu,\tau_\mu}$
satisfies $c\Theta\le A_\mu\le C\Theta$ and
$|DA_\mu|\le C\Theta/r$, with constants allowed to depend on $\mu$.
The bubble is nonnegative. The phase perturbation satisfies
$|\Phi_{T_\mu,\tau_\mu}|\le C|\tau_\mu|\Theta$ and vanishes on
$\overline{B_{2\rho}(a)}$. All terms in $A_\mu$ satisfy boundary matching.
Since $Y_\mu=A_\mu+\phi_\mu$ with $\phi_\mu\in H^1_\delta$,
Lemma~\ref{lem:regularity-end} gives positivity and smoothness.
To identify the final phase, let $w_\mu$ be the correction in
Lemma~\ref{lem:background-continuation} and $w_0$ the local difference
in \eqref{eq:background}. Near $0$, the bubble vanishes and $\chi=1$,
so the two construction steps give
\[
\begin{aligned}
V_\mu&=V_{T_\mu}+(w_0+w_\mu),\\
Y_\mu&=V_{\widehat T_\mu}+(w_0+w_\mu+\phi_\mu).
\end{aligned}
\]
Thus $Y_\mu-V_{\widehat T_\mu}\in H^1_\delta$ near $0$.
By Lemma~\ref{lem:regularity-end},
$Y_\mu-V_{\widehat T_\mu}\in\cC^{k,\alpha}_{\Theta;\delta}$
there for every fixed $k\ge0$ and $0<\alpha<1$.
Hence $Y_\mu$ has Fowler phase $\widehat T_\mu$.

On each compact subset of $\overline{B_1}\setminus\{0,a\}$,
the bubble tends smoothly to zero and $Y_\mu\to V$ in $L^{p+1}$.
The local convergence argument in Step 4 of the proof of
Lemma~\ref{lem:regularity-end} gives smooth convergence on these
compact subsets, including across the boundary identification.
On a common punctured ball around $0$, the final phase satisfies
$\widehat T_\mu\to T$, and the correction $w_\mu+\phi_\mu$ tends to
zero in $H^1_\delta$. Applying Step 6 of the proof of Lemma~\ref{lem:regularity-end}
on this common punctured ball gives convergence in
$\cC^k_{\Theta;0}$ there. Combining it with the compact
convergence away from $0$ and $a$ proves
\eqref{eq:outside-convergence}.

\step{Step 3. The peak.}
Set $\widetilde Y_\mu(y)=\mu^{(n-2)/2}Y_\mu(a+\mu y)$.
On $B_1$ in the $y$ coordinates, $\chi(|a+\mu y|)=0$ for small
$\mu$, and $\mu^{(n-2)/2}V_\mu(a+\mu\,\cdot)\to0$ uniformly.
By scaling and \eqref{eq:correction-bounds},
\mbox{$\|\mu^{(n-2)/2}\phi_\mu(a+\mu\,\cdot)\|_{L^{p+1}(B_1)}\to0$}.
Thus
\[
\widetilde Y_\mu\to U_{\sqrt{s_n},0}
\quad\text{in }L^{p+1}(B_1).
\]
The limiting norm is positive; hence
$\max_{\overline{B_\mu(a)}}Y_\mu\ge c\mu^{-(n-2)/2}$.
Choose a maximum point there to obtain \eqref{eq:peak}.

\step{Step 4. Preservation of nondegeneracy.}
All parameter dependent quantities below are evaluated at
$(\lambda_\mu,b_\mu)$, with parameter derivatives taken before
evaluation. Write $\cK_\mu=\cK_{\mu,\lambda_\mu,b_\mu}$ and
$\cL^{\rm new}:=-L_{\gamma_\mu}-n(n+2)Y_\mu^{p-1}$. Set
\[
\mathcal D=\{\dot\tau\psi_{\widehat T_\mu}+w:
                     (\dot\tau,w)\in\R\oplus H^1_\delta\},\qquad
\mathcal D_0=\{\dot\tau\psi_{\widehat T_\mu}+w\in\mathcal D:
                          w\perp_*\cK_\mu\}.
\]
Give $\mathcal D$ the norm $|\dot\tau|+\|w\|_{H^1_\delta}$.
The inverse constructed in the proof of
Proposition~\ref{prop:projected-solution}, with the moment data set to
zero, gives
\begin{equation}\label{eq:new-compressed-inverse}
\Pi^{\perp,*}\cL^{\rm new}:\mathcal D_0\to
                                  \Pi^{\perp,*}H^{-1}_\delta
\quad\hbox{is an isomorphism}.
\end{equation}
This projected operator uses 
$\psi_{\widehat T_\mu}$ at the constructed solution.

Define the tangents to the solved family by
\[
\begin{aligned}
Y_0&=J_0+\lambda_\mu(\partial_\lambda\tau_\mu)
                 \psi_{\widehat T_\mu}+\lambda_\mu\partial_\lambda\phi_\mu,\\
Y_i&=J_i+(\partial_{b_i}\tau_\mu)\psi_{\widehat T_\mu}
                 +\partial_{b_i}\phi_\mu,\qquad 1\le i\le n.
\end{aligned}
\]
All these functions belong to $\mathcal D$.
Differentiate \eqref{eq:full-projected} with respect to $\lambda$ and
$b_i$, and then evaluate at $(\lambda_\mu,b_\mu)$.
Since $c_\mu(\lambda_\mu,b_\mu)=0$, we obtain
\begin{equation}\label{eq:parameter-linearization}
\begin{aligned}
\cL^{\rm new}Y_0
 &=\sum_{\alpha=0}^n\lambda_\mu(\partial_\lambda c_\mu)_\alpha
                                  (-\Delta)J_\alpha,\\
\cL^{\rm new}Y_i
 &=\sum_{\alpha=0}^n(\partial_{b_i}c_\mu)_\alpha(-\Delta)J_\alpha,
                                  \qquad 1\le i\le n.
\end{aligned}
\end{equation}
For $0\le\alpha\le n$ and $1\le i\le n$, differentiating the
moment conditions gives
\begin{equation}\label{eq:parameter-moments}
\begin{aligned}
\ip{(-\Delta)J_\alpha}{Y_0}
 &=(J_\alpha,J_0)_*-\lambda_\mu(\partial_\lambda J_\alpha,\phi_\mu)_*
   =(M_\mu)_{\alpha0},\\
\ip{(-\Delta)J_\alpha}{Y_i}
 &=(J_\alpha,J_i)_*-(\partial_{b_i}J_\alpha,\phi_\mu)_*
   =(M_\mu)_{\alpha i}.
\end{aligned}
\end{equation}
Here $\psi_{\widehat T_\mu}$ vanishes on a neighborhood of the support
of each $J_\alpha$. Since $M_\mu$ is invertible,
\eqref{eq:parameter-moments} establishes the direct sum
\[
\mathcal D=\mathcal D_0\oplus\operatorname{span}\{Y_0,\ldots,Y_n\}.
\]
The space $H^{-1}_\delta$ is the direct sum of
$\Pi^{\perp,*}H^{-1}_\delta$ and
$\operatorname{span}\{(-\Delta)J_0,\ldots,(-\Delta)J_n\}$.
At the critical point, differentiating \eqref{eq:reduced-gradient}
and using $c_\mu=0$ yields
\begin{equation}\label{eq:Hessian-multiplier}
D^2_{\lambda,b}\widetilde\cJ_\mu
=\operatorname{diag}(\lambda_\mu^{-1},1,\ldots,1)
 M_\mu^T D_{\lambda,b}c_\mu.
\end{equation}
Here $D_{\lambda,b}c_\mu$ is the Jacobian with columns
$\partial_\lambda c_\mu,\partial_{b_1}c_\mu,\ldots,\partial_{b_n}c_\mu$.
By Proposition~\ref{prop:reduced-minimum}, the ordinary Hessian is
positive definite. Since $M_\mu$ and the diagonal matrix are invertible,
$D_{\lambda,b}c_\mu$ is invertible.
Choose the ordered bases $Y_0,\ldots,Y_n$ in the finite dimensional
domain and $(-\Delta)J_0,\ldots,(-\Delta)J_n$ in the finite dimensional
source space. With respect to these bases and the preceding direct sums,
\eqref{eq:parameter-linearization} gives the block matrix
\[
\begin{pmatrix}
\left.\Pi^{\perp,*}\cL^{\rm new}\right|_{\mathcal D_0}&0\\
*&D_{\lambda,b}c_\mu\,\operatorname{diag}(\lambda_\mu,1,\ldots,1)
\end{pmatrix}.
\]
By \eqref{eq:new-compressed-inverse} and \eqref{eq:Hessian-multiplier},
both diagonal blocks are invertible. The bounded lower triangular
operator $\cL^{\rm new}:\mathcal D\to H^{-1}_\delta$ is therefore an
isomorphism, with a finite inverse norm for each sufficiently small
fixed $\mu$. This proves \eqref{eq:insertion-inverse}.
\end{proof}

\appendix
\section{Auxiliary estimates and local singular solutions}\label{app:supplementary}
\subsection{Nondegeneracy of the bubble energy minimum}\label{app:coefficient-tables}
\begin{proof}[Proof of Proposition~\ref{prop:local-well}]
We compute the scale and center derivatives of $F$ in
\eqref{eq:F-definition}. Its defining integrals and their first two
parameter derivatives converge locally uniformly for $n\ge30$.
Indeed, writing $r=|y|$, on each compact parameter set we have
\[
|\cH|=O(r^6),\quad |D\cH|=O(r^5),\quad
U_{\lambda,b}=O(r^{2-n}),\quad DU_{\lambda,b}=O(r^{1-n}),
\]
also after two parameter derivatives of the bubble. Each integrand
is therefore $O(r^{14-2n})$, and its radial majorant is $Cr^{13-n}$.
The center calculation uses the angular identities of
Brendle--Marques \cite[Propositions~19--20]{BrendleMarques2009}.
\step{Step 1. The scale derivative.}
To compute the scale dependence, define the coefficients $A_k(n)$ by
\begin{equation*}
\sum_{k=0}^4A_k(n)s^k
:=(n+2)f(s)^2+4sf(s)f'(s)+2s^2f'(s)^2.
\end{equation*}
Set $r_0(n)=1$ and, for $1\le k\le4$,
\begin{equation}
r_k(n):=\prod_{\ell=0}^{k-1}
\frac{n+2+2\ell}{n-8-2\ell},
\label{eq:r-k}
\end{equation}
Define $I_n(s):=\sum_{k=0}^4A_k(n)r_k(n)s^{k+2}$.
Applying the radial integral formula of
Brendle--Marques~\cite[Propositions~17--18]{BrendleMarques2009}
to $f(s)=80-50s+7s^2$, we obtain
\begin{equation}
F(\lambda,0)=-C_{n,W} I_n(\lambda^2).
\label{eq:F-scale}
\end{equation}
Here $C_{n,W}>0$ depends only on $n$ and the fixed Weyl tensor $W$.

Since $U_{\lambda,0}$ is radial, the first term in
\eqref{eq:F-definition} vanishes by \eqref{eq:radial-gauge}.
Each $H_{ik}$ is homogeneous and harmonic of degree two.
Differentiating $\cH(y)=f(|y|^2)H(y)$ and integrating over the unit
sphere, we obtain
\begin{equation*}
\int_{S^{n-1}}|D\cH(r\theta)|^2\dd\theta
=2r^2\left(\int_{S^{n-1}}|H(\theta)|^2\dd\theta\right)
\sum_{k=0}^4A_k(n)r^{2k}.
\end{equation*}
For $0\le k\le4$, the substitution $r=\lambda t$ and the beta integral
formula yield
\begin{equation*}
\lambda^{n-2}\int_0^\infty
\frac{r^{n+1+2k}}{(\lambda^2+r^2)^{n-2}}\dd r
=\frac{\lambda^{2k+4}}2
B\!\left(\frac{n+2}{2}+k,\frac{n-6}{2}-k\right).
\end{equation*}
For $k\ge1$, $\alpha>0$, and $\beta>k$, the recurrence is
\begin{equation}
\frac{B(\alpha+k,\beta-k)}{B(\alpha,\beta)}
=\prod_{\ell=0}^{k-1}\frac{\alpha+\ell}{\beta-1-\ell}.
\label{eq:beta-recurrence}
\end{equation}
With $\alpha=(n+2)/2$ and $\beta=(n-6)/2$, the beta ratio equals $r_k(n)$
in \eqref{eq:r-k}. We obtain \eqref{eq:F-scale} with
\begin{equation*}
C_{n,W}=\frac{n-2}{16(n-1)}
\left(\int_{S^{n-1}}|H(\theta)|^2\dd\theta\right)
B\!\left(\frac{n+2}{2},\frac{n-6}{2}\right)>0.
\end{equation*}
Every denominator in \eqref{eq:r-k} is positive for $n\ge30$.

We seek a nondegenerate minimum of $\lambda\mapsto F(\lambda,0)$.
By \eqref{eq:F-scale}, it suffices to prove $I_n'(9/10)>0$,
$I_n'(11/10)<0$, and $I_n''<0$ on $[9/10,11/10]$:
these conditions locate a unique nondegenerate maximum of $I_n$
in this interval.
To prove the sign of $I_n''$ throughout the interval, we expand
$-I_n''$ in the Bernstein basis, whose functions are nonnegative
and sum to one; hence positive Bernstein coefficients imply $I_n''<0$.
We clear the positive denominators and expand both the endpoint
expressions and the Bernstein coefficients in powers of $n-30$
to verify the required signs for every $n\ge30$.

Set \(D_n:=(n-8)(n-10)(n-12)(n-14)>0\). Reduction to the common denominator $D_n$ yields
\begin{equation}
\begin{aligned}
50000D_nI_n'(9/10)
={}&24500345757696+4693179601152(n-30)\\
&+602130393360(n-30)^2+31533264180(n-30)^3\\
&+1338334164(n-30)^4+25658703(n-30)^5>0.
\end{aligned}
\label{eq:I-left-certificate}
\end{equation}
At the other endpoint,
\begin{equation}
\begin{aligned}
-50000D_nI_n'(11/10)
={}&9936905527296+12448646276352(n-30)\\
&+1144904769360(n-30)^2+82313202180(n-30)^3\\
&+2365410564(n-30)^4+16899003(n-30)^5>0.
\end{aligned}
\label{eq:I-right-certificate}
\end{equation}
To prove $I_n''<0$ throughout $[9/10,11/10]$, write
$s=9/10+t/5$ with $0\le t\le1$ and use the Bernstein polynomials
$B_{r,d}(t):=\binom drt^r(1-t)^{d-r}$ for a nonnegative integer
$d$ and $r=0,\ldots,d$. We convert powers of $t$ to this basis
by the identity
\begin{equation*}
t^j=\sum_{r=j}^d\frac{\binom rj}{\binom dj}B_{r,d}(t),
\qquad 0\le j\le d.
\end{equation*}
Applying this identity with $d=4$ to the polynomial
$-1000D_nI_n''(9/10+t/5)$, we obtain
\begin{equation*}
-1000D_nI_n''(s)=\sum_{r=0}^4P_r(n-30)B_{r,4}(t),
\end{equation*}
where \(P_r(n-30)=\sum_{k=0}^5a_{rk}(n-30)^k\) has positive
coefficients as follows:
\begin{equation*}
\begin{gathered}
\begin{array}{c|rrr}
r&a_{r0}&a_{r1}&a_{r2}\\ \hline
0&6407278534656&2133921686272&230888118960\\
1&5113817063424&1912452268288&203914653840\\
2&3658424426496&1691729783552&174393005360\\
3&2021939011584&1498016766208&145427943440\\
4&17166606336&1334792434432&118893859760
\end{array}\\
\begin{array}{c|rrr}
r&a_{r3}&a_{r4}&a_{r5}\\ \hline
0&14182097980&419289804&4455133\\
1&12987814420&402050916&4432207\\
2&11526073180&376392364&4324053\\
3&9929519220&344804356&4148087\\
4&8297728380&309334924&3919373
\end{array}
\end{gathered}
\end{equation*}
Thus $P_r(n-30)>0$ for every $n\ge30$. Since $D_n>0$,
$B_{r,4}(t)\ge0$, and $\sum_{r=0}^4B_{r,4}(t)=1$ on $[0,1]$,
we conclude that
\begin{equation*}
I_n''(s)<0,\qquad 9/10\le s\le11/10.
\end{equation*}
By the endpoint signs in \eqref{eq:I-left-certificate} and
\eqref{eq:I-right-certificate}, the strictly decreasing function $I_n'$
has a unique zero:
\begin{equation}
s_n\in(9/10,11/10),\qquad I_n'(s_n)=0,
\qquad\text{and}\qquad I_n''(s_n)<0.
\label{eq:s-star}
\end{equation}
\step{Step 2. The center derivatives.}
We prove $D_b^2F(\sqrt{s_n},0)>0$. The polynomial $f$ satisfies
\begin{equation*}
2f(s)f'(s)+sf'(s)^2
=-8000+9740s-3500s^2+392s^3.
\end{equation*}
Define
\begin{align}
t_0(n)&:=1,\qquad
t_k(n):=\prod_{\ell=0}^{k-1}
\frac{n+6+2\ell}{n-8-2\ell},\qquad 1\le k\le3,\notag\\
\shortintertext{and}
K_n(s)&:=-8000+9740t_1(n)s-3500t_2(n)s^2+392t_3(n)s^3.
\label{eq:K-n}
\end{align}
Applying the beta recurrence \eqref{eq:beta-recurrence} with
$\alpha=(n+6)/2$ and $\beta=(n-6)/2$ produces the factors $t_k(n)$ in
\eqref{eq:K-n}.  Let \(E_n:=(n-8)(n-10)(n-12)>0\).  For $s=9/10+t/5$, conversion to the Bernstein basis yields
\begin{equation*}
375E_nK_n(s)=-\sum_{r=0}^3Q_r(n-30)B_{r,3}(t),
\end{equation*}
where \(Q_r(n-30)=\sum_{k=0}^3b_{rk}(n-30)^k\) has positive
coefficients as follows:
\begin{equation*}
\begin{array}{c|rrrr}
r&b_{r0}&b_{r1}&b_{r2}&b_{r3}\\ \hline
0&1471670640&314268036&22334418&668712\\
1& 891088560&215683044&16090622&558898\\
2& 451848240&141444276&10853538&461292\\
3&  89598960& 86462004& 6489102&374718
\end{array}
\end{equation*}
Therefore, for
\(9/10\le s\le11/10\) and \(n\ge30\),
\begin{equation}
K_n(s)<0.
\label{eq:K-negative}
\end{equation}
Define
\begin{align*}
I_1(\lambda)
&:=\int_0^\infty \lambda^{n-2}(\lambda^2+r^2)^{-n}r^{n+5}
\{2f(r^2)f'(r^2)+r^2f'(r^2)^2\}\dd r,
\\
I_2(\lambda)
&:=\int_0^\infty \lambda^{n-2}(\lambda^2+r^2)^{1-n}r^{n+5}
f'(r^2)^2\dd r>0.
\end{align*}
By the substitution $r=\lambda t$ and the beta recurrence
\eqref{eq:beta-recurrence},
\begin{equation*}
I_1(\lambda)=\lambda^4K_n(\lambda^2)
\int_0^\infty(1+r^2)^{-n}r^{n+5}\dd r.
\end{equation*}
For $1\le\alpha,\beta\le n$, differentiating \eqref{eq:F-definition}
with respect to the center parameters $b_\alpha$ and $b_\beta$, we obtain
\[
\begin{aligned}
\partial_{b_\alpha}\partial_{b_\beta}F(\lambda,0)
={}&(n-2)^2\lambda^{n-2}
\int_{\R^n}(\lambda^2+|y|^2)^{-n}
\sum_{l=1}^n\cH_{\alpha l}\cH_{\beta l}\dd y\\
&-\frac{(n-2)^2\lambda^{n-2}}4
\int_{\R^n}(\lambda^2+|y|^2)^{-n}
\sum_{i,k,l=1}^n(\partial_{y_l}\cH_{ik})^2y_\alpha y_\beta\dd y\\
&+\frac{(n-2)^2\lambda^{n-2}\delta_{\alpha\beta}}{8(n-1)}
\int_{\R^n}(\lambda^2+|y|^2)^{1-n}
\sum_{i,k,l=1}^n(\partial_{y_l}\cH_{ik})^2\dd y.
\end{aligned}
\]

The integral expression for $D_b^2F(\lambda,0)$ agrees with the formula in
Brendle--Marques \cite[Proposition~19]{BrendleMarques2009}.
Evaluating these angular integrals as in
Brendle--Marques \cite[Proposition~20]{BrendleMarques2009}, we obtain
\begin{equation*}
\partial_{b_\alpha}\partial_{b_\beta}F(\lambda,0)
=\frac{(n-2)^2|S^{n-1}|}{4n(n+2)}
\left[\frac{I_2(\lambda)}{n-1}S_W\delta_{\alpha\beta}
-\frac{2I_1(\lambda)}{n+4}
\bigl(4M_{\alpha\beta}+S_W\delta_{\alpha\beta}\bigr)\right],
\end{equation*}
where
\begin{equation*}
M_{\alpha\beta}
:=\sum_{i,k,l=1}^n(W_{i\alpha kl}+W_{ilk\alpha})
(W_{i\beta kl}+W_{il k\beta}).
\end{equation*}
The matrix \((M_{\alpha\beta})_{1\le\alpha,\beta\le n}\) is positive
semidefinite because it is a Gram matrix, and by the first Bianchi identity,
\begin{equation*}
S_W:=\sum_{i,j,k,l=1}^n(W_{ijkl}+W_{ilkj})^2
=3|W|^2>0.
\end{equation*}
By \eqref{eq:K-negative}, $I_1(\sqrt{s_n})<0$, whereas
$I_2(\sqrt{s_n})>0$ and $S_W>0$. In the formula for
$\partial_{b_\alpha}\partial_{b_\beta}F(\lambda,0)$, the multiple of
$S_W I$ is strictly positive and the remaining
contribution of the Gram matrix is nonnegative. Hence
\begin{equation*}
D_b^2F(\sqrt{s_n},0)>0.
\end{equation*}

\step{Step 3. The full Hessian.}
The bubble satisfies
$U_{\lambda,-b}(-y)=U_{\lambda,b}(y)$.  Since $\cH(-y)=\cH(y)$, a change of variables in
\eqref{eq:F-definition} proves $F(\lambda,-b)=F(\lambda,b)$, and
\begin{equation}
\partial_{b_\alpha}F(\lambda,0)=0
\qquad\text{and}\qquad
\partial_\lambda\partial_{b_\alpha}F(\lambda,0)=0.
\label{eq:mixed-zero}
\end{equation}
Finally, from \eqref{eq:F-scale} and \eqref{eq:s-star},
\begin{equation*}
\partial_{\lambda\lambda}F(\sqrt{s_n},0)
=-4C_{n,W}s_nI_n''(s_n)>0.
\end{equation*}
By \eqref{eq:F-scale}, \eqref{eq:s-star}, and \eqref{eq:mixed-zero},
\[
\nabla F(\sqrt{s_n},0)=0,\qquad
D^2F(\sqrt{s_n},0)=
\begin{pmatrix}
\partial_{\lambda\lambda}F(\sqrt{s_n},0)&0\\
0&D_b^2F(\sqrt{s_n},0)
\end{pmatrix}>0.
\]
This proves Proposition~\ref{prop:local-well}.
\end{proof}

\subsection{Local solutions with cylindrical asymptotics}\label{app:local-cylindrical}
We construct local singular solutions on arbitrary smooth manifolds
in every dimension $n\ge3$, using logarithmic radius near the singularity.
The constant $A_n$ fixed in Section~\ref{sec:inductive} satisfies
$n(n-2)A_n^{p-1}=(n-2)^2/4$.
For the flat equation, prescribed Fowler asymptotics were considered by
Han--Li~\cite{HanLi2021}. The proof below treats the constant profile on
an arbitrary smooth background.

\begin{theorem}\label{thm:local-cylindrical}
Let $(M^n,g)$ be a smooth Riemannian manifold, $n\ge3$, and let $x_0$ be
an interior point. There are constants $\rho_0,C>0$, depending only on
$n$ and $g$ in a fixed normal neighborhood of $x_0$. For every $0<\rho\le\rho_0$, there is a positive solution
$u_\rho\in C^\infty(\overline{B_\rho^g(x_0)}\setminus\{x_0\})$
of
\[
-L_g u_\rho=n(n-2)u_\rho^p
\quad\text{in }B_\rho^g(x_0)\setminus\{x_0\},
\]
 satisfying
\begin{equation}\label{eq:cyl-geometric-bound}
\sum_{j=0}^2 r^j
\left|\nabla_g^j\bigl(r^{(n-2)/2} u_\rho-A_n\bigr)\right|_g
\le Cr^2,\qquad r=d_g(x,x_0),\quad 0<r\le\rho.
\end{equation}
In particular,
\[
u_\rho(x)=A_n r^{-(n-2)/2}(1+O(r^2))
\]
uniformly in $\rho$, and $u_\rho^{4/(n-2)}g$ is complete toward $x_0$.
\end{theorem}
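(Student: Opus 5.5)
We construct $u_\rho$ by perturbing the constant cylindrical solution in logarithmic coordinates. Fix normal coordinates $x$ centered at $x_0$, so that $g_{ij}=\delta_{ij}+O(|x|^2)$ and $r=|x|$. Write $t=-\log r$ and $u=r^{-(n-2)/2}(A_n+\phi(t,\theta))$. The flat operator becomes
\[
-\partial_t^2-\Delta_\sigma+\tfrac{(n-2)^2}{4},
\]
and linearizing $n(n-2)(A_n+\phi)^p$ at $A_n$ gives the potential $n(n+2)A_n^{p-1}=\frac{(n+2)(n-2)}{4}$. The linearized cylindrical operator is therefore
\[
\mathcal L_0=-\partial_t^2-\Delta_\sigma-(n-2).
\]
The curvature terms in $-L_g$ contribute, after conjugation, an operator $P_g$ whose coefficients are $O(e^{-2t})$ relative to second-order cylindrical derivatives. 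This is because $g-\delta=O(r^2)$, $\partial g=O(r)$, and $\Scal_g=O(1)$, multiplied by $r^2$. The inhomogeneous error at $\phi=0$, namely $P_gA_n$, is then $O(e^{-2t})$.

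The plan is a fixed-point argument on the half-cylinder $[t_\rho,\infty)\times S^{n-1}$, with $t_\rho=-\log\rho$, in the weighted space $\|\phi\|=\sup e^{2t}(|\phi|+|\partial\phi|+|\partial^2\phi|)$, using Hölder versions for the Schauder step. The key linear step is a right inverse for $\mathcal L_0$ on functions decaying like $e^{-2t}$, with no boundary condition imposed at $t=t_\rho$; we only need some solution in the ball, so the boundary values at $r=\rho$ are free. We decompose into spherical harmonics. Mode $\ell$ has indicial roots $\pm\sqrt{\lambda_\ell-(n-2)}$.
\begin{itemize}
\item For $\ell=0$ the roots are $\pm i\sqrt{n-2}$, which are oscillatory. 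We solve with the variation-of-constants formula integrating from $+\infty$. Since the source decays like $e^{-2t}$, the kernel $\sin(\sqrt{n-2}(s-t))$ gives a solution bounded by $Ce^{-2t}$.
\item For $\ell=1$ the roots are $\pm1$. We again integrate from infinity: the kernel $\sinh(s-t)$ against $e^{-2s}$ converges and gives $O(e^{-2t})$.
\item For $\ell\ge2$ the roots $\pm\kappa_\ell$ satisfy $\kappa_\ell\ge\sqrt{n+2}>2$ when $n\ge3$. Here we use the Green function $\frac{1}{2\kappa_\ell}e^{-\kappa_\ell|t-s|}$ on $[t_\rho,\infty)$, which decays fast enough.
\end{itemize}
The only danger is resonance, namely an indicial root with real part exactly $-2$. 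This never occurs: the roots are imaginary for $\ell=0$, equal to $\pm1$ for $\ell=1$, and exceed $2$ in absolute value for $\ell\ge2$. The mode estimates are uniform for $\ell\ge2$ once one uses $\lambda_\ell$-weighted bounds, and summing them is done via an energy or maximum-principle argument for the tail of high modes. Interior Schauder estimates on unit cylinder pieces then upgrade $C^0$ decay to the $C^{2,\alpha}$ weighted bound. The constants are independent of $t_\rho$, because all integrals run toward $+\infty$ or are translation invariant.

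For the nonlinear step, the equation reads $\mathcal L_0\phi=-P_g(A_n+\phi)+N(\phi)$, where $|N(\phi)|\le C\phi^2$ for small $\phi$. The map $\phi\mapsto\mathcal L_0^{-1}[\dots]$ sends the ball $\|\phi\|\le 2C_0$ into itself and contracts there. This holds because $\|P_g\phi\|\le C\rho_0^{2}\|\phi\|$ in the weighted norm (the coefficients carry an extra $e^{-2t}\le\rho^2$), and $\|N(\phi)\|\le C e^{-2t_\rho}\|\phi\|^2$. Choosing $\rho_0$ small closes the argument uniformly in $\rho\le\rho_0$.

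Positivity follows since $|\phi|\le Ce^{-2t}\le C\rho_0^2<A_n/2$. Smoothness up to $\partial B_\rho$ minus $x_0$ follows by elliptic regularity; if needed we use a smooth cutoff extension so that $u_\rho$ is smooth up to the closed ball. Translating back gives \eqref{eq:cyl-geometric-bound}, since $r^j\nabla^j$ corresponds to cylindrical derivatives, and completeness follows from $u_\rho^{4/(n-2)}g\ge cr^{-2}g$.

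The main obstacle I expect is the uniform right inverse for $\mathcal L_0$ on the $\ell=0$ mode. The homogeneous solutions there do not decay, so the inverse must be chosen by integrating from infinity. It must be checked that the curvature perturbation does not spoil this, which works because $P_g$ gains $e^{-2t}$. I would handle the modes with a Fourier-mode ODE analysis and then glue in Schauder estimates, mirroring Step 1 of the proof of Lemma~\ref{lem:base-inverse}.
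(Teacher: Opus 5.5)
Your argument is correct and follows the paper's overall scheme: the logarithmic cylinder, the linearization $\partial_t^2+\Delta_\sigma+n-2$ at $A_n$, decaying solutions in degrees $0$ and $1$ obtained by integrating from $+\infty$, and a contraction at weight $e^{-2t}$ in which the curvature operator gains a factor $\rho^2$. Your linear step for degrees $\ge2$, however, is genuinely different.

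The paper imposes the Dirichlet condition $\Pi_{\ge2}w(0)=0$ at the outer slice. It solves on finite cylinders $[0,L]$ by Lax--Milgram, obtains $L$-independent decay of $\|w_L(t)\|_{L^2(S^{n-1})}$ from a comparison argument for $N_\varepsilon$, and passes to the limit. This shows that $P$ is an isomorphism on the constrained space. You instead build only a right inverse with free boundary data, using the whole-line kernel $\frac1{2\kappa_\ell}e^{-\kappa_\ell|t-s|}$. The summation over modes, which you leave vague, is in fact immediate: this diagonal kernel has operator norm on $L^2(S^{n-1})$ at most $\frac1{2\kappa_2}e^{-\kappa_2|t-s|}$, with $\kappa_2=\sqrt{n+2}>2$. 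That gives the $L^2(S^{n-1})$-valued convolution bound $Ce^{-2t}$ directly, and the same local $L^2\to C^{2,\alpha}$ estimates then apply.

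Your route is more explicit and avoids the finite-cylinder limit. The paper's route buys uniqueness and a canonical solution whose trace at $r=\rho$ lies only in degrees $0$ and $1$. This is exactly what is used afterwards in \eqref{eq:cyl-trace} and in Proposition~\ref{prop:cyl-curvature}, where uniqueness at weight two identifies $\mathcal R_\rho$. It also gives regularity up to $r=\rho$ from standard Dirichlet estimates.

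One point to tidy: if you extend the source by zero for $t<t_\rho$, it jumps at $t_\rho$. Interior Schauder across that slice then does not give $C^{2,\alpha}$ on the first strip. Either extend the source by even reflection and a cutoff, or construct the solution on $B_{2\rho}$ (shrinking $\rho_0$) and restrict. The second option also yields $u_\rho\in C^\infty(\overline{B^g_\rho(x_0)}\setminus\{x_0\})$ directly.
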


We first construct an inverse on
$\mathcal C=[0,\infty)\times S^{n-1}$ with product metric $\dd t^2+\sigma$.
Here $\sigma$ is the unit round metric. Fix $0<\alpha<1$ and define
\[
\|w\|_{C^{k,\alpha}_\nu(\mathcal C)}
:=\sup_{s\ge0}e^{\nu s}\|w\|_{C^{k,\alpha}(Q_s)},
\qquad Q_s=[s,s+1]\times S^{n-1},
\]
where the strip norms use the product metric. Interior and boundary
estimates use the enlarged strips
$Q_s^*=[\max\{0,s-1\},s+2]\times S^{n-1}$.

Let $\Pi_\ell$ be the round $L^2$ projection onto spherical harmonics of
degree $\ell$, and let $\Pi_{\ge2}=I-\Pi_0-\Pi_1$.
Set $P:=\partial_t^2+\Delta_\sigma+n-2$.

\begin{lemma}\label{lem:cyl-inverse}
For $1<\nu<\sqrt{n+2}$, $P$ is an isomorphism from
$\{w\in C^{2,\alpha}_\nu(\mathcal C):\Pi_{\ge2}w(0)=0\}$ to
$C^{0,\alpha}_\nu(\mathcal C)$. Its inverse $G_\nu$ satisfies, with $C=C(n,\alpha,\nu)>0$,
\begin{equation}\label{eq:cyl-inverse}
PG_\nu f=f,\qquad
\Pi_{\ge2}(G_\nu f)(0)=0,\qquad
\|G_\nu f\|_{C^{2,\alpha}_\nu}\le C\|f\|_{C^{0,\alpha}_\nu}.
\end{equation}
\end{lemma}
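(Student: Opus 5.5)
The plan is to split $w$ and $f$ into the spherical-harmonic blocks $\Pi_0$, $\Pi_1$, $\Pi_{\ge2}$ and solve each block separately. For $\Pi_0$ and $\Pi_1$ we use the unique exponentially decaying ODE solution, with no condition at $t=0$. For $\Pi_{\ge2}$ we use the Dirichlet problem at $t=0$, solved by an operator-valued Green function. Each block is first estimated in a weighted $\sup_t\|\cdot\|_{L^2(S^{n-1})}$ sense. Interior and boundary Schauder estimates on the enlarged strips $Q_s^*$ then upgrade this to the weighted $C^{2,\alpha}_\nu$ bound. On each fixed strip the projections $\Pi_\ell$ are bounded in $C^{0,\alpha}$ and in $L^\infty_tL^2(S^{n-1})$, with constants depending only on $n,\alpha$, because they are finite-rank or complementary to finite-rank smooth kernels. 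Hence $\|\Pi_\bullet f\|_{C^{0,\alpha}_\nu}\le C\|f\|_{C^{0,\alpha}_\nu}$.

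\textbf{Low modes.} For $\ell=0$ the equation is $w''+(n-2)w=\Pi_0f$. Setting $c=\sqrt{n-2}$, we take $w_0(t)=-\int_t^\infty c^{-1}\sin(c(s-t))\,\Pi_0f(s)\dd s$. This gives $|w_0(t)|+|w_0'(t)|\le C\nu^{-1}e^{-\nu t}\|f\|$. The homogeneous solutions $\cos ct$ and $\sin ct$ do not decay, which gives uniqueness in the class $C^{2,\alpha}_\nu$.

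For $\ell=1$ the equation is $w''-w=\Pi_1f$. We take $w_1(t)=-\int_t^\infty\sinh(s-t)\,\Pi_1f(s)\dd s$. Its bound is $C(\nu-1)^{-1}e^{-\nu t}$, and this is exactly where $\nu>1$ is needed. Uniqueness holds because $e^{t}$ grows and $e^{-t}\notin C^0_\nu$ when $\nu>1$.

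Neither block carries a boundary condition, which is consistent with the constraint in the domain acting only on $\Pi_{\ge2}w(0)$.

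\textbf{High modes.} On the range of $\Pi_{\ge2}$ set $A:=(-\Delta_\sigma-(n-2))^{1/2}$. Since $\lambda_2=2n$, we have $A\ge\kappa:=\sqrt{n+2}>\nu$. We define
\[
w_{\ge2}(t)=-\int_0^\infty (2A)^{-1}\bigl(e^{-A|t-s|}-e^{-A(t+s)}\bigr)\Pi_{\ge2}f(s)\dd s .
\]
This solves $w''-A^2w=\Pi_{\ge2}f$ with $w_{\ge2}(0)=0$. The operator norm on $L^2(S^{n-1})$ of the kernel is at most $(2\kappa)^{-1}e^{-\kappa|t-s|}$. Therefore
\[
\|w_{\ge2}(t)\|_{L^2(S^{n-1})}\le C\int_0^\infty e^{-\kappa|t-s|}e^{-\nu s}\dd s\,\|f\|_{C^{0}_\nu}\le \frac{C}{\kappa-\nu}\,e^{-\nu t}\|f\|_{C^0_\nu},
\]
which uses $\nu<\kappa$. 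For uniqueness, each high mode has homogeneous solutions $e^{\pm\kappa_\ell t}$. The Dirichlet condition kills the decaying combination except $\sinh$, and that grows, so the solution is unique in the decaying class.

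\textbf{Upgrading to H\"older norms.} Set $w=w_0+w_1+w_{\ge2}$. Then $Pw=f$, $\Pi_{\ge2}w(0)=0$, and $\sup_{s}e^{\nu s}\|w\|_{L^2(Q_s^*)}\le C\|f\|_{C^{0,\alpha}_\nu}$. Near $t=0$ the boundary data of $w$ are only known for the high block. We therefore apply interior Schauder estimates to the full $w$ on strips with $s\ge1$. On $Q_0^*$ we apply Dirichlet boundary Schauder to $w_{\ge2}$, whose trace is zero. The ODE blocks $w_0,w_1$ are smooth in $t$ times fixed spherical harmonics, so they are estimated directly. This yields $\|w\|_{C^{2,\alpha}(Q_s)}\le C(\|f\|_{C^{0,\alpha}(Q_s^*)}+\|w\|_{L^2(Q_s^*)})\le Ce^{-\nu s}\|f\|_{C^{0,\alpha}_\nu}$, with $C$ independent of $s$ by translation invariance of $P$. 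Injectivity together with this construction gives that $P$ is an isomorphism with inverse $G_\nu$ satisfying \eqref{eq:cyl-inverse}.

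The step I expect to require the most care is the high-mode block. The estimate must be uniform over infinitely many harmonics, which is why I use the operator-valued kernel bounded through the spectral gap $A\ge\sqrt{n+2}$, rather than summing modewise sup bounds. One must also check that the $L^\infty_tL^2_\theta$ bound feeds correctly into boundary Schauder on $Q_0^*$, where only the $\Pi_{\ge2}$ part has Dirichlet data.
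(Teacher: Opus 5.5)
Your argument is correct. For degrees $0,1$ and for uniqueness it matches the paper; the difference is in the $\Pi_{\ge2}$ block.

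\textbf{How the paper treats $\Pi_{\ge2}$.} The paper never writes down a Green operator there. It solves the Dirichlet problem on finite cylinders $[0,L]\times S^{n-1}$ by Lax--Milgram on the $\Pi_{\ge2}$ subspace. It then obtains the $L$-independent bound $\|w_L(t)\|_{L^2(S^{n-1})}\le M_f(n+2-\nu^2)^{-1}e^{-\nu t}$ by showing that $N_\varepsilon=(\|w_L(t)\|_2^2+\varepsilon^2)^{1/2}$ satisfies $N_\varepsilon''\ge(n+2)N_\varepsilon-(n+2)\varepsilon-\|\Pi_{\ge2}f(t)\|_2$ and comparing with an exponential barrier. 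Finally it upgrades to $C^{2,\alpha}(Q_s)$ on strips and lets $L\to\infty$.

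\textbf{How you treat it.} You diagonalize with $A=(-\Delta_\sigma-(n-2))^{1/2}\ge\sqrt{n+2}$ and write the half-cylinder Dirichlet inverse explicitly. The operator-norm bound $(2\kappa)^{-1}e^{-\kappa|t-s|}$ on the kernel gives the weighted $L^\infty_tL^2_\theta$ estimate in a single convolution. There is no exhaustion, barrier, or compactness step.

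\textbf{Comparison.} Both proofs run on the same gap $n+2>\nu^2$. Yours is shorter and gives the inverse in closed form. The paper's comparison argument does not require diagonalizing $P$. Its Lax--Milgram step also directly supplies an $H^1$ weak solution with zero trace, which is the starting point elliptic regularity needs.

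\textbf{A point to record on your route.} You should state that starting point explicitly. $A$ times your kernel, and its $t$-derivative, also have $L^2(S^{n-1})$ operator norms at most $e^{-\kappa|t-s|}$. Hence $w_{\ge2}\in H^1_{\mathrm{loc}}$ up to $t=0$, has zero trace, and is (mode by mode) a weak solution. Only then do the boundary and interior estimates apply.

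\textbf{Sign slip.} Both of your low-mode formulas have the wrong sign. Without the leading minus, $\int_t^\infty c^{-1}\sin(c(s-t))\,\Pi_0f(s)\,ds$ and $\int_t^\infty\sinh(s-t)\,\Pi_1f(s)\,ds$ solve $w''+(n-2)w=\Pi_0f$ and $w''-w=\Pi_1f$. Your versions produce $-\Pi_0f$ and $-\Pi_1f$. This is cosmetic and does not affect any estimate.
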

\begin{proof}
\step{Step 1. Degrees zero and one.}
On degree $\ell$, $P$ acts as
$\partial_t^2+n-2-\ell(\ell+n-2)$. In degrees zero and one, respectively,
its homogeneous solutions are trigonometric functions of frequency
$\sqrt{n-2}$ and the functions $e^{\pm t}$. The unique solutions with
the required decay are
\[
\begin{aligned}
w_0(t)&=\frac1{\sqrt{n-2}}
 \int_t^\infty\sin\bigl(\sqrt{n-2}(s-t)\bigr)\Pi_0f(s)\dd s,\\
w_1(t)&=\int_t^\infty\sinh(s-t)\Pi_1f(s)\dd s.
\end{aligned}
\]
Differentiating under the integral twice proves their equations.
The integrals converge because $\nu>1$; in degree one the estimates use
\[
\int_0^\infty e^{-\nu s}\sinh s\dd s=\frac1{\nu^2-1}
\qquad\text{and}\qquad
\int_0^\infty e^{-\nu s}\cosh s\dd s=\frac\nu{\nu^2-1}.
\]
The two integral bounds control $w_0,w_1$ and their first derivatives.
The equations control their second derivatives and H\"older seminorms. Thus
$\|w_0\|_{C^{2,\alpha}_\nu}+\|w_1\|_{C^{2,\alpha}_\nu}
\le C\|f\|_{C^{0,\alpha}_\nu}$.

\step{Step 2. Higher harmonics on finite cylinders.}
For the high harmonics, first solve $Pw_L=\Pi_{\ge2}f$ on
$[0,L]\times S^{n-1}$ with zero boundary values at both ends. On the
subspace of harmonics of degrees at least two in
$H^1_0((0,L)\times S^{n-1})$, with the usual zero traces at both ends,
the bilinear form of $-P$ is
\[
B_L(w,w)=\int_0^L\int_{S^{n-1}}
\bigl(|w_t|^2+|\nabla_\sigma w|_\sigma^2-(n-2)w^2\bigr)\dd\theta\dd t.
\]
On this harmonic subspace,
$\|\nabla_\sigma w\|_2^2\ge2n\|w\|_2^2$, and hence
\[
B_L(w,w)\ge\|w_t\|_2^2+
\frac{n+2}{2n}\|\nabla_\sigma w\|_2^2.
\]
These norms are taken on $[0,L]\times S^{n-1}$ with product volume;
the same Poincar\'e inequality also controls the $L^2$ norm.
The Lax--Milgram theorem gives a unique weak solution in this subspace.
Since the harmonic projections commute with $P$, the equation holds
against arbitrary test functions. Interior and
Dirichlet regularity in Gilbarg--Trudinger
\cite[Chapters~6, 8, and 9]{GilbargTrudinger2001} imply
$w_L\in C^{2,\alpha}$.

\step{Step 3. Uniform weighted estimates.}
To obtain bounds independent of $L$, set
$N_\varepsilon(t)=(\|w_L(t)\|_2^2+\varepsilon^2)^{1/2}$.
Differentiating and using the equation and Cauchy--Schwarz, we obtain
\[
\begin{split}
N_\varepsilon''
&=\frac{\|(w_L)_t\|_2^2+\langle w_L,(w_L)_{tt}\rangle}
           {N_\varepsilon}
-\frac{\langle w_L,(w_L)_t\rangle^2}{N_\varepsilon^3}\\
&\ge(n+2)N_\varepsilon-(n+2)\varepsilon
                         -\|\Pi_{\ge2}f(t)\|_2.
\end{split}
\]
If $M_f=\sup_{t\ge0}e^{\nu t}\|\Pi_{\ge2}f(t)\|_2$, comparison with
\[
\frac{M_f}{n+2-\nu^2}e^{-\nu t}+\varepsilon
\]
for $-\partial_t^2+n+2$, followed by $\varepsilon\downarrow0$, yields
\begin{equation*}
\|w_L(t)\|_2\le
\frac{M_f}{n+2-\nu^2}e^{-\nu t}.
\end{equation*}
The comparison holds at $t=0,L$ because $w_L$ vanishes there.

Applying local elliptic estimates on nested strips, we obtain
\begin{equation*}
\|w_L\|_{C^{2,\alpha}(Q_s)}
\le C\bigl(\|w_L\|_{L^2(Q_s^*)}
                  +\|\Pi_{\ge2}f\|_{C^{0,\alpha}(Q_s^*)}\bigr)
\le Ce^{-\nu s}\|f\|_{C^{0,\alpha}_\nu}
\end{equation*}
when $L>s+3$. For the first inequality, use interior or Dirichlet
$W^{2,m}$ estimates, with zero boundary data in the latter case.
Starting from $m=2$, apply these estimates and Sobolev embeddings
on nested strips. If $m=n/2$, use $W^{2,n/2}\hookrightarrow L^n$
on a smaller strip and then apply the $W^{2,n}$ estimate.
Thus the iteration reaches an exponent $m>n/2$ and gives a supremum
bound. Schauder estimates then prove the $C^{2,\alpha}(Q_s)$ estimate.
The coefficients and strip widths are fixed; hence the constants
are independent of $s,L$.

\step{Step 4. The limit and uniqueness.}
By the uniform estimates, a sequence with $L\to\infty$ converges on
each finite cylinder to a solution in the harmonics of degree at least
two. This solution satisfies \eqref{eq:cyl-inverse}. The $C^{2,\alpha}$ bounds pass to the limit,
including the H\"older seminorms. Add $w_0+w_1$ to obtain $G_\nu f$.
For a homogeneous solution in $C^{2,\alpha}_\nu$, the condition
$\nu>1$ gives $w_0=w_1=0$. For $\ell\ge2$, decay and the boundary condition imply
\[
w_\ell(t)=e^{-\sqrt{\ell(\ell+n-2)-(n-2)}t}w_\ell(0)=0.
\]
This proves uniqueness.
\end{proof}

\begin{proof}[Proof of Theorem~\ref{thm:local-cylindrical}]
\step{Step 1. Reduction to the half cylinder.}
Work in a fixed relatively compact normal neighborhood of $x_0$.
In polar coordinates $x=\exp_{x_0}(r\theta)$,
\[
g=\dd r^2+r^2h(r,\theta),\qquad h(0,\theta)=\sigma.
\]
In normal Cartesian coordinates, Taylor's formula reads
\[
g_{ij}(r\theta)=\delta_{ij}+r^2\sum_{k,l=1}^n\theta_k\theta_l
 \int_0^1(1-\tau)\partial_k\partial_lg_{ij}(\tau r\theta)\dd\tau.
\]
By Gauss' lemma its tangential restriction is $h$. In particular,
$r^{-2}(h-\sigma)$ extends smoothly to $r=0$ and
\begin{equation}\label{eq:cyl-h-estimates}
\|(r\partial_r)^j(h(r)-\sigma)\|_{C^k(S^{n-1})}
\le C_{j,k}r^2.
\end{equation}
Write
\[
\partial_r\log\sqrt{\det h(r,\theta)/\det\sigma(\theta)}=O(r).
\]
This estimate also holds after every fixed number of cylindrical derivatives. Thus
\[
\Delta_g=\partial_r^2+\left(\frac{n-1}{r}+\partial_r\log\sqrt{\det h/\det\sigma}\right)\partial_r
                                      +r^{-2}\Delta_{h(r)}.
\]

For this local construction, put
$V(t,\theta)=r^{(n-2)/2}u(\exp_{x_0}(r\theta))$, where $r=\rho e^{-t}$.
Direct differentiation yields
\[
u_r=r^{-n/2}(-\frac{n-2}{2}V-V_t),\qquad
u_{rr}=r^{-(n+2)/2}\bigl(\frac{n(n-2)}4V+(n-1)V_t+V_{tt}\bigr).
\]
Thus the Yamabe equation is
\begin{equation}\label{eq:cyl-cylinder}
V_{tt}+\Delta_\sigma V-\frac{(n-2)^2}{4}V+n(n-2)V^p+\mathcal E_\rho V=0,
\end{equation}
where
\begin{equation}\label{eq:cyl-E}
\mathcal E_\rho V=(\Delta_{h(r)}-\Delta_\sigma)V
-r\partial_r\log\sqrt{\det h/\det\sigma}\left(V_t+\frac{n-2}{2}V\right)-c(n)r^2\Scal_g V.
\end{equation}
By \eqref{eq:cyl-h-estimates} and the smoothness of $\Scal_g$, every
coefficient of $\mathcal E_\rho$ has $C^{k,\alpha}(Q_s)$ norm at most
$C_k\rho^2e^{-2s}$. The logarithmic volume derivative in
\eqref{eq:cyl-E} has the same bound after multiplication by $r$.
One further derivative controls each H\"older seminorm.

\step{Step 2. A contraction at the constant profile.}
The constant $A_n$ solves the flat equation in \eqref{eq:cyl-cylinder},
and its linearization is $P$. For $V=A_n+w$ the equation becomes
\begin{equation}\label{eq:cyl-w-equation}
\begin{gathered}
Pw+\mathcal E_\rho A_n+\mathcal E_\rho w+\mathcal Q(w)=0,\\
\mathcal Q(w)
:=n(n-2)\bigl\{(A_n+w)^p
 -A_n^p
 -pA_n^{p-1}w\bigr\}.
\end{gathered}
\end{equation}

By the coefficient estimates,
\begin{equation}\label{eq:cyl-metric-estimates}
\|\mathcal E_\rho A_n\|_{C^{0,\alpha}_2}\le C\rho^2
\qquad\text{and}\qquad
\|\mathcal E_\rho w\|_{C^{0,\alpha}_2}\le C\rho^2\|w\|_{C^{2,\alpha}_2}.
\end{equation}
For $|w|\le A_n/2$, Taylor's formula is
\begin{equation*}
\mathcal Q(w)=\frac{4n(n+2)}{n-2}w^2
\int_0^1(1-\tau)(A_n+\tau w)^{p-2}\dd\tau.
\end{equation*}

The integrand and all its $w$ derivatives are bounded on this interval.
Applying H\"older product estimates, we obtain
\begin{align*}
\|\mathcal Q(w)\|_{C^{0,\alpha}_2}&\le C\|w\|_{C^{2,\alpha}_2}^2,\\
\|\mathcal Q(w_1)-\mathcal Q(w_2)\|_{C^{0,\alpha}_2}
&\le C(\|w_1\|_{C^{2,\alpha}_2}+\|w_2\|_{C^{2,\alpha}_2})
                                      \|w_1-w_2\|_{C^{2,\alpha}_2}.
\end{align*}
These estimates hold in all dimensions $n\ge3$, since the arguments
of the power stay uniformly positive.

Since $1<2<\sqrt{n+2}$, Lemma~\ref{lem:cyl-inverse} applies with
$\nu=2$. Combining its inverse with \eqref{eq:cyl-metric-estimates}, the map
\[
\mathcal T_\rho(w)=-G_2(\mathcal E_\rho A_n+
                                  \mathcal E_\rho w+\mathcal Q(w))
\]
satisfies, with constants independent of $\rho$,
\begin{align*}
\|\mathcal T_\rho(w)\|_{C^{2,\alpha}_2}
&\le C_0\rho^2+C_1\rho^2\|w\|_{C^{2,\alpha}_2}
                           +C_2\|w\|_{C^{2,\alpha}_2}^2,\\
\|\mathcal T_\rho(w_1)-\mathcal T_\rho(w_2)\|_{C^{2,\alpha}_2}
&\le\bigl(C_1\rho^2+C_2\|w_1\|_{C^{2,\alpha}_2}
                     +C_2\|w_2\|_{C^{2,\alpha}_2}\bigr)
                                      \|w_1-w_2\|_{C^{2,\alpha}_2}.
\end{align*}
Choose $C_*\ge\max\{1,2C_0\}$ and then $\rho_0>0$ so small that
\[
C_*\rho_0^2\le\min\{1,A_n/2\},\qquad
(C_1+2C_2C_*)\rho_0^2\le\frac12.
\]
The map is a contraction of the closed ball
\[
\{w:\Pi_{\ge2}w(0)=0,\ \|w\|_{C^{2,\alpha}_2}\le C_*\rho^2\}
\]
into itself. It has a unique fixed point $w_\rho$ in that ball, and
$A_n+w_\rho\ge A_n/2$. Consequently
\[
u_\rho(\exp_{x_0}(r\theta))
=r^{-(n-2)/2}\bigl(A_n+w_\rho(\log(\rho/r),\theta)\bigr)
\]
is a positive solution, with
\begin{equation}\label{eq:cyl-base-decay}
\|w_\rho\|_{C^{2,\alpha}(Q_s)}\le C_*\rho^2e^{-2s}.
\end{equation}
Fixing $\alpha=1/2$ in the contraction shows that $\rho_0$ depends only
on $n$ and the fixed smooth geometry.

\step{Step 3. All derivatives and the geometric estimate.}
The equation
\[
(P+\mathcal E_\rho)w_\rho
=-\mathcal E_\rho A_n-\mathcal Q(w_\rho)
\]
is uniformly elliptic. Since $\Pi_{\ge2}w_\rho(0)=0$, the boundary trace lies in
$\operatorname{span}\{1,\theta_1,\ldots,\theta_n\}$. The estimate
\eqref{eq:cyl-base-decay} therefore bounds every tangential derivative
of the trace by $C_k\rho^2$. Interior and Dirichlet estimates first establish
smoothness up to $t=0$. If the derivatives through order $k$ obey
$C_k\rho^2e^{-2s}$, the source terms obey
\[
\|\mathcal E_\rho A_n\|_{C^{k,\alpha}(Q_s)}
 \le C_k\rho^2e^{-2s},\qquad
\|\mathcal Q(w_\rho)\|_{C^{k,\alpha}(Q_s)}
 \le C_k\rho^4e^{-4s}.
\]
Applying Schauder estimates on enlarged strips, we obtain
\begin{equation*}
\begin{split}
\|w_\rho\|_{C^{k+2,\alpha}(Q_s)}
&\le C_k\bigl(\|w_\rho\|_{C^0(Q_s^*)}
 +\|\mathcal E_\rho A_n\|_{C^{k,\alpha}(Q_s^*)}
 +\|\mathcal Q(w_\rho)\|_{C^{k,\alpha}(Q_s^*)}\bigr)
 +C_k\rho^2\mathbf1_{\{s\le1\}}.
\end{split}
\end{equation*}
For $s\le1$ the last term is bounded by $C_ke^2\rho^2e^{-2s}$.
Starting from \eqref{eq:cyl-base-decay}, induction therefore proves
\begin{equation}\label{eq:cyl-all-derivatives}
\|w_\rho\|_{C^{k,\alpha}(Q_s)}\le C_k\rho^2e^{-2s}
\qquad k\ge0.
\end{equation}
The same $\rho_0$ applies to every fixed derivative order.
Bounds for any fixed H\"older exponent follow from one further derivative.

The metric $r^{-2}g=\dd t^2+h(r)$ is uniformly comparable to the
product metric, with uniformly bounded connection difference.
Using the conformal connection formula with $\dd\log r=-\dd t$, we obtain
\[
r|\nabla_gw_\rho|_g+r^2|\nabla_g^2w_\rho|_g
\le C\bigl(|\nabla_{\dd t^2+\sigma}w_\rho|
                  +|\nabla^2_{\dd t^2+\sigma}w_\rho|\bigr)
\le Cr^2.
\]
Together with $|w_\rho|\le Cr^2$, this is
\eqref{eq:cyl-geometric-bound}.
\step{Step 4. Completeness.}
We have $u_\rho\ge(A_n/2)r^{-(n-2)/2}$, and
\[
u_\rho^{4/(n-2)}g
=(A_n+w_\rho)^{4/(n-2)}(\dd t^2+h(r))
\]
is bounded below by a positive multiple of the product metric.
Equivalently, every curve approaching $x_0$ has length at least
$c\int|\dd r|/r=\infty$. This proves completeness toward the puncture
and nonremovability.
\end{proof}

The boundary condition $\Pi_{\ge2}w_\rho(0)=0$ gives
\begin{equation}\label{eq:cyl-trace}
\begin{split}
u_\rho(\exp_{x_0}(\rho\theta))
 &=\rho^{-(n-2)/2}\left(A_n+b_0(\rho)+\sum_{j=1}^n b_j(\rho)\theta_j\right),\\
|b_0(\rho)|+\sum_{j=1}^n|b_j(\rho)|&\le C\rho^2.
\end{split}
\end{equation}
The coefficients $b_0(\rho),\ldots,b_n(\rho)$ are determined by the
inverse $G_2$. For fixed $\rho$, the outer
sphere is at finite distance in the conformal metric.

Applying Theorem~\ref{thm:local-cylindrical} to $g_\infty$ at $o$ gives a
local solution $u_\rho$ that is complete toward $o$ and satisfies
\[
d_{g_\infty}(x,o)^{(n-2)/2} u_\rho(x)\to A_n\quad\text{as }x\to o,
\qquad
\limsup_{x\to o}\frac{u_\infty(x)}{u_\rho(x)}=\infty.
\]
The local solution has the boundary trace in \eqref{eq:cyl-trace}.

\subsection{Second order asymptotics of the local solutions}\label{app:quadratic-curvature}
We compute the curvature term in the local solution and retain the
homogeneous correction imposed by its boundary condition.
We use the curvature convention in which the unit sphere has positive
sectional curvature. Write $\operatorname{Ric}_g^\circ=\operatorname{Ric}_g-\Scal_g g/n$ and define
\begin{equation*}
B(\theta):=A_n\left(\frac16\operatorname{Ric}_g(x_0)(\theta,\theta)
 -\frac{\Scal_g(x_0)}{12(n-1)}\right),
\qquad B_2(\theta):=\frac{A_n}{6}\operatorname{Ric}^{\circ}_g(x_0)(\theta,\theta).
\end{equation*}

\begin{proposition}\label{prop:cyl-curvature}
For the correction $w_\rho$ constructed in the proof of
Theorem~\ref{thm:local-cylindrical},
\begin{equation}\label{eq:cyl-refined}
w_\rho(t,\theta)=\rho^2e^{-2t}B(\theta)
-\rho^2e^{-\sqrt{n+2}\,t}B_2(\theta)+\mathcal R_\rho(t,\theta).
\end{equation}
For every $2<\nu<\min\{3,\sqrt{n+2}\}$ and every fixed integer $k\ge0$,
\begin{equation}\label{eq:cyl-remainder}
\|\mathcal R_\rho\|_{C^{k,\alpha}(Q_s)}
\le C_{k,\nu}\rho^3e^{-\nu s},\qquad
\Pi_{\ge2}\mathcal R_\rho(0)=0.
\end{equation}
The constants are independent of $s\ge0$ and $0<\rho\le\rho_0$.
The coefficients in \eqref{eq:cyl-trace} satisfy
\begin{equation}\label{eq:cyl-trace-refined}
b_0(\rho)=\frac{A_n(n-2)\Scal_g(x_0)}{12n(n-1)}\rho^2+O(\rho^3),
\qquad \sum_{j=1}^n|b_j(\rho)|=O(\rho^3).
\end{equation}
\end{proposition}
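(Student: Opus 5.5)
The plan is to identify the leading term of the source $\mathcal E_\rho A_n$ in \eqref{eq:cyl-w-equation}, solve the model equation $Pw=-(\text{leading source})$ explicitly in spherical harmonics, and show that what is left over is a fixed point of a map with a cubic-order source. That leftover is controlled in a weight $\nu\in(2,3)$ by Lemma~\ref{lem:cyl-inverse}.

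\textbf{Step 1: expand the source.} In normal coordinates the tangential metric satisfies $h(r)=\sigma-\tfrac{r^2}{3}\operatorname{Rm}_g(x_0)(\theta,\cdot,\cdot,\theta)|_{T S^{n-1}}+O(r^3)$, with the $O(r^3)$ term retaining all cylindrical derivative bounds by \eqref{eq:cyl-h-estimates}. This gives
\[
\log\sqrt{\det h/\det\sigma}=-\tfrac{r^2}{6}\operatorname{Ric}_g(x_0)(\theta,\theta)+O(r^3),
\]
and the same expansion for its $r\partial_r$-derivative, with leading term $-\tfrac{r^2}{3}\operatorname{Ric}(\theta,\theta)$. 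Since $\Delta_{h(r)}-\Delta_\sigma$ annihilates constants and $\Scal_g=\Scal_g(x_0)+O(r)$, formula \eqref{eq:cyl-E} with $r=\rho e^{-t}$ gives
\[
\mathcal E_\rho A_n=\rho^2e^{-2t}A_n\Bigl(\tfrac{n-2}{6}\operatorname{Ric}_g(x_0)(\theta,\theta)-c(n)\Scal_g(x_0)\Bigr)+\mathcal E^{(3)},
\qquad \|\mathcal E^{(3)}\|_{C^{k,\alpha}(Q_s)}\le C_k\rho^3e^{-3s}.
\]
I then split $\operatorname{Ric}(\theta,\theta)=\Scal/n+\operatorname{Ric}^\circ(\theta,\theta)$, which is a sum of a degree-$0$ and a degree-$2$ spherical harmonic. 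There is no degree-$1$ part.

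\textbf{Step 2: the explicit particular solution.} Acting on $e^{-2t}Y_\ell$, the operator $P$ multiplies by $4+n-2-\ell(\ell+n-2)$. This factor is $n+2$ for $\ell=0$ and $2-n$ for $\ell=2$, and both are nonzero. Dividing each harmonic component of the leading source by the corresponding factor produces $\rho^2e^{-2t}B(\theta)$. I will check that this reproduces the stated $B$, with $B-B_0=B_2$ on the degree-$2$ part.

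This particular solution violates the boundary condition $\Pi_{\ge2}w(0)=0$ through its degree-$2$ part $\rho^2B_2$. I therefore subtract the decaying homogeneous degree-$2$ solution $\rho^2e^{-\sqrt{n+2}\,t}B_2$; here $\sqrt{n+2}=\sqrt{2n-(n-2)}$ is the decay rate for $\ell=2$. The degree-$0$ part needs no correction, because $e^{-2t}$ is exactly the decaying solution that the integral formula for $w_0$ in Lemma~\ref{lem:cyl-inverse} selects. Set $w^{(2)}=\rho^2e^{-2t}B-\rho^2e^{-\sqrt{n+2}t}B_2$. Then $Pw^{(2)}=-(\text{leading source})$, $\Pi_{\ge2}w^{(2)}(0)=0$, and $w^{(2)}\in C^{k,\alpha}_2$ has norm $O(\rho^2)$. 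It is not in $C_\nu$ for $\nu>2$, which is why it is removed explicitly.

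\textbf{Step 3: the remainder.} Define $\mathcal R_\rho:=w_\rho-w^{(2)}$. It satisfies
\[
P\mathcal R_\rho=-\mathcal E^{(3)}-\mathcal E_\rho w_\rho-\mathcal Q(w_\rho),\qquad \Pi_{\ge2}\mathcal R_\rho(0)=0.
\]
By \eqref{eq:cyl-all-derivatives} and the coefficient bounds, the last two source terms are $O(\rho^4e^{-4s})$ in every $C^{k,\alpha}(Q_s)$. Hence the whole right side lies in $C^{0,\alpha}_\nu$ with norm $\le C\rho^3$ for every $\nu<3$. Take $2<\nu<\min\{3,\sqrt{n+2}\}$. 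Then $\mathcal R_\rho$ lies in $C^{2,\alpha}_2$, and by uniqueness in Lemma~\ref{lem:cyl-inverse} it coincides with $G_\nu$ applied to that source, because both solve the same problem and $G_\nu f\in C_\nu\subset C_2$. This yields \eqref{eq:cyl-remainder} for $k=2$. Higher $k$ follow by the strip Schauder induction of Step~3 of the proof of Theorem~\ref{thm:local-cylindrical}.

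\textbf{Step 4: the trace.} Evaluate at $t=0$. $B_2$ and $\Pi_{\ge2}\mathcal R_\rho(0)$ contain no degree-$0$ or degree-$1$ parts, and $B$ has no degree-$1$ part. Hence $b_j(\rho)=O(\rho^3)$, and $b_0$ equals $\rho^2$ times the spherical average of $B$ plus $O(\rho^3)$. That average is $A_n\Scal\bigl(\tfrac1{6n}-\tfrac1{12(n-1)}\bigr)=\tfrac{A_n(n-2)\Scal}{12n(n-1)}$, which is \eqref{eq:cyl-trace-refined}.

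\textbf{Main obstacle.} I expect Step 1 to be the hardest: getting the second-order expansion of $h$ and of the volume density with the correct sign convention, and checking that every remaining piece of $\mathcal E_\rho A_n$ is genuinely $O(\rho^3e^{-3s})$ with all cylindrical derivatives. The analytic part in Step 3 is routine given Lemma~\ref{lem:cyl-inverse}.
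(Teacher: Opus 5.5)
Your proposal is correct and follows the paper's proof essentially step for step. You expand $\mathcal E_\rho A_n=r^2F_2+O(r^3)$ via the volume density, solve with $r^2B$, and cancel the degree-two trace with $-\rho^2e^{-\sqrt{n+2}\,t}B_2$. You then identify the remainder with $G_\nu$ of an $O(\rho^3e^{-3s})$ source by uniqueness at weight $2$, and finish with the strip Schauder induction and the trace computation. The only difference is cosmetic: the paper subtracts $r^2B$ first and then writes $\psi=h_\rho+G_\nu f_*$, whereas you subtract both explicit terms at once.
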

\begin{proof}
\step{Step 1. The curvature coefficient.}
The polar volume expansion is
\[
\sqrt{\det h(r,\theta)/\det\sigma}
=1-\frac16\operatorname{Ric}_g(x_0)(\theta,\theta)r^2+O(r^3).
\]
Indeed, for an orthonormal basis $e_j$ of $\theta^\perp$, the normal
Jacobi fields in a parallel frame satisfy
$J_{e_j}(r)=re_j-r^3\operatorname{Rm}(e_j,\theta)\theta/6+O(r^4)$;
taking the square root of the determinant of their normalized Gram
matrix proves the volume formula.
Smoothness preserves the indicated orders under cylindrical derivatives.
It follows that
\[
r\partial_r\log\sqrt{\det h/\det\sigma}
=-\tfrac13\operatorname{Ric}_g(x_0)(\theta,\theta)r^2+O(r^3)
\quad\text{and}\quad r^2\Scal_g=r^2\Scal_g(x_0)+O(r^3).
\]
Substitution in \eqref{eq:cyl-E} yields
\[
\mathcal E_\rho A_n=r^2F_2(\theta)+O(r^3),\qquad
F_2=A_n\left(\frac{n-2}{6}\operatorname{Ric}_g(x_0)(\theta,\theta)
                   -c(n)\Scal_g(x_0)\right).
\]
Write $B=B_0+B_2$, where
$B_0=A_n(n-2)\Scal_g(x_0)/(12n(n-1))$.
The trace free Ricci term has spherical degree two, and $B_2=\Pi_2B$. Directly,
\[
\Pi_0F_2=-(n+2)B_0\qquad\text{and}\qquad \Pi_2F_2=(n-2)B_2.
\]
Thus
\begin{equation*}
(\Delta_\sigma+n+2)B=-F_2,\qquad P(r^2B)=-r^2F_2.
\end{equation*}
\step{Step 2. The remainder estimate.}
Set $\psi=w_\rho-r^2B$. By \eqref{eq:cyl-w-equation},
\[
P\psi=f_*:=-(\mathcal E_\rho A_n-r^2F_2)
                -\mathcal E_\rho w_\rho-\mathcal Q(w_\rho).
\]
By the smooth polar expansion, the first source term is bounded by
$C_k\rho^3e^{-3s}$ in $C^{k,\alpha}(Q_s)$.
By \eqref{eq:cyl-all-derivatives} and the coefficient bounds for
$\mathcal E_\rho$,
\[
\|\mathcal E_\rho w_\rho\|_{C^{k,\alpha}(Q_s)}
 +\|\mathcal Q(w_\rho)\|_{C^{k,\alpha}(Q_s)}
\le C_k\rho^4e^{-4s}.
\]
Taking $\rho_0\le1$ therefore yields
\[
\|f_*\|_{C^{k,\alpha}(Q_s)}\le C_k\rho^3e^{-3s},
\qquad k\ge0,\quad s\ge0.
\]
The homogeneous correction
$h_\rho=-\rho^2e^{-\sqrt{n+2}\,t}B_2$ satisfies
\[
Ph_\rho=0,\qquad \Pi_{\ge2}h_\rho(0)
=-\rho^2B_2=\Pi_{\ge2}\psi(0).
\]
For the stated $\nu$, Lemma~\ref{lem:cyl-inverse} implies
$\|G_\nu f_*\|_{C^{2,\alpha}_\nu}\le C_\nu\rho^3$.
The function $\psi-h_\rho-G_\nu f_*$ lies in the domain with weight $2$ and
has zero image under $P$. Uniqueness at weight two therefore implies
\[
\psi=h_\rho+G_\nu f_*,\qquad \mathcal R_\rho=G_\nu f_*.
\]

The inverse bound proves \eqref{eq:cyl-remainder} through order two.
The boundary trace of $\mathcal R_\rho$ has only degrees zero and one; therefore all derivatives
of that trace are bounded by $C_{k,\nu}\rho^3$.
Applying the strip Schauder estimate to $P\mathcal R_\rho=f_*$, we obtain
\[
\begin{split}
\|\mathcal R_\rho\|_{C^{k+2,\alpha}(Q_s)}
\le C_k\bigl(\|\mathcal R_\rho\|_{C^0(Q_s^*)}
 +\|f_*\|_{C^{k,\alpha}(Q_s^*)}\bigr)
 +C_{k,\nu}\rho^3\mathbf1_{\{s\le1\}}
\le C_{k,\nu}\rho^3e^{-\nu s}.
\end{split}
\]
Here $\nu<3$ controls the source decay and the boundary term is confined
to the first two strips. This proves the estimate in every order,
with the same $\rho_0$.

\step{Step 3. The asymptotic expansion and boundary values.}
Returning to $r=\rho e^{-t}$ yields
\[
w_\rho=r^2B-\rho^{2-\sqrt{n+2}}r^{\sqrt{n+2}}B_2
                   +O(\rho^{3-\nu}r^\nu).
\]
Since $\nu<\sqrt{n+2}$, the homogeneous correction is
$O_\rho(r^\nu)$ for fixed $\rho$.
Thus
\[
r^{(n-2)/2}u_\rho(\exp_{x_0}(r\theta))
=A_n+r^2B(\theta)+O_\rho(r^\nu)\quad\text{as }r\downarrow0,
\]
with the same order after every fixed number of cylindrical derivatives.
At $t=0$, the homogeneous term cancels the degree two component of $\rho^2B$; the remaining
trace is $\rho^2B_0+\mathcal R_\rho(0)$. This proves
\eqref{eq:cyl-trace-refined}.
\end{proof}

At $t=0$, the homogeneous correction in \eqref{eq:cyl-refined}
has order $\rho^2$, while $\mathcal R_\rho(0)=O(\rho^3)$.
For constant sectional curvature $K$, $B_2=0$ and
$B/A_n=(n-2)K/12$. In that case, substituting
$s_K(r)=r-Kr^3/6+O(r^5)$ in the explicit solution
$u=A_n s_K(r)^{-(n-2)/2}$ for $g=\dd r^2+s_K(r)^2\sigma$, we obtain
\[
r^{(n-2)/2}u=A_n\left(1+\frac{n-2}{12}Kr^2+O(r^4)\right).
\]
Its conformal metric is the product cylinder after
$\dd t=\dd r/s_K(r)$, in agreement with \eqref{eq:cyl-refined}.

\subsection{Regularity and weighted convergence}\label{app:regularity}

All spaces in this subsection are the Euclidean weighted spaces
\eqref{eq:weighted-spaces}--\eqref{eq:weighted-holder}.
Let $\gamma$ be smooth on $\overline{B_1}$, Euclidean near $0$
and on the fixed boundary collar, with $\dd V_\gamma=\dd x$.
Assume that $Q_\gamma$ is coercive on
$H^1_{\mathrm{ev}}(B_1)$.
Weak solutions are understood using the form $Q_\gamma$
in \eqref{eq:weak-robin-form}, with smooth test functions
that vanish near $0$ and have antipodally even boundary traces.
For the first assertion below, let $A$ be smooth on
$\overline{B_1}\setminus\{0\}$, satisfy
\eqref{eq:boundary-matching}, and obey
$c\Theta\le A\le C\Theta$ and $|DA|\le C\Theta/r$.

\begin{lemma}\label{lem:regularity-end}
Under the preceding assumptions, the following statements hold.
\begin{enumerate}
\renewcommand{\theenumi}{\roman{enumi}}
\renewcommand{\labelenumi}{\textup{(\theenumi)}}
\item\label{item:regularity-end-positive}
Every weak solution $Y=A+w$, $w\in H^1_\delta$, of
$\Fop_\gamma(Y)=0$ is smooth and strictly positive on
$\overline{B_1}\setminus\{0\}$ and satisfies
\eqref{eq:boundary-matching}.
\item\label{item:regularity-end-weighted}
Let $Y>0$ be a smooth solution of $\Fop_\gamma(Y)=0$
on a punctured neighborhood of $0$.
If $Y-V_T\in H^1_\delta$ on that neighborhood, then
$Y-V_T\in\cC^{k,\alpha}_{\Theta;\delta}$ near $0$
for every fixed integer $k\ge0$ and $0<\alpha<1$.
\item\label{item:regularity-end-local-convergence}
\textup{Smooth convergence away from the puncture.}
Let $\gamma_j$ be smooth metrics on $\overline{B_1}$,
with $\dd V_{\gamma_j}=\dd x$, all equal to $\gamma_0$
on the fixed boundary collar.
Let $Y_j>0$ solve $\Fop_{\gamma_j}(Y_j)=0$
on $B_1\setminus\{0\}$ and satisfy
\eqref{eq:boundary-matching}.
Suppose that $\gamma_j\to\gamma$ smoothly and
$Y_j\to Y$ in $L^{p+1}$ on compact subsets of
$\overline{B_1}\setminus\{0\}$, where $Y>0$ is smooth.
Then $Y_j\to Y$ smoothly on the same compact subsets.
\item\label{item:regularity-end-end-convergence}
Under the assumptions of
\textup{(\ref{item:regularity-end-local-convergence})},
suppose that $\gamma$ and every $\gamma_j$ are Euclidean
on a fixed punctured ball $0<|x|<r_0$ and that
$Y-V_T\in H^1_\delta(0<|x|<r_0)$.
If, on this punctured ball,
\begin{equation}\label{eq:end-convergence-data}
Y_j=Y+V_{T_j}-V_T+w_j,\qquad
|T_j-T|+\|w_j\|_{H^1_\delta(0<|x|<r_0)}\to0,
\end{equation}
then $Y_j\to Y$ in $\cC^k_{\Theta;0}(0<|x|<r_0)$
for every fixed integer $k\ge0$.
\end{enumerate}
\end{lemma}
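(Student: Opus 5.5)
We prove the four assertions in order. Each rests on standard elliptic regularity, applied in the cylindrical variables $t=-\log r$ near the puncture and in Kelvin-extended coordinates near $\partial B_1$.

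\textbf{Part (\ref{item:regularity-end-positive}).} Away from $0$, the weighted space $H^1_\delta$ is locally ordinary $H^1$ and $A$ is smooth, so $Y\in H^1_{\rm loc}$. Across $\partial B_1$ we use the Kelvin extension: the weak form with antipodally even test functions is exactly the weak equation for the extension on a neighborhood of $r=1$. For local boundedness we run the Brezis--Kato/Moser iteration with the potential $(Y_+)^{p-1}\in L^{n/2}_{\rm loc}$, which gives $Y\in L^q_{\rm loc}$ for all $q<\infty$. Then $W^{2,q}$ estimates and Schauder bootstrapping give smoothness on $\overline{B_1}\setminus\{0\}$, and hence \eqref{eq:boundary-matching}.

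For positivity we test the equation with $Y_-=\max\{-Y,0\}$. This is legitimate because near $0$ we have $A\ge c\Theta$ and $w\in H^1_\delta$, so $Y_-$ is supported where $|w|\ge c\Theta$. This gives $Y_-\in H^1_{\rm ev}$. Since $-L_\gamma Y\ge0$ on $\{Y<0\}$, coercivity of $Q_\gamma$ yields $Q_\gamma(Y_-,Y_-)\le0$, so $Y_-=0$. Strict positivity then follows from the strong maximum principle applied to $-L_\gamma Y+c\,Y\ge0$ locally, together with the Kelvin extension at the boundary.

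\textbf{Part (\ref{item:regularity-end-weighted}).} Set $w=Y-V_T$ and pass to $\bar w(t,\theta)=r^{(n-2)/2}w$. Subtracting the Fowler equation gives
\[
\bigl(-\partial_t^2-\Delta_\sigma+\tfrac{(n-2)^2}{4}\bigr)\bar w=n(n-2)\bigl((v_*+\bar w)^p-v_*^p\bigr).
\]
The crucial point is that $Y>0$ already forces $v_*+\bar w>0$. The weighted hypothesis says $e^{\delta t}\bar w\in H^1([t_0,\infty)\times S^{n-1})$. On unit strips $Q_s$ the right side is a linear term with coefficient bounded by $C(1+|\bar w|)^{p-1}$.

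We apply the Moser iteration on each strip to bound $\sup_{Q_s}|\bar w|\le C\|\bar w\|_{H^1(Q_s^*)}\le Ce^{-\delta s}$. For the nonlinearity we use a uniform-in-$s$ $\varepsilon$-regularity argument, which works because $\|\bar w\|_{H^1(Q_s^*)}\to0$. Once $\bar w$ is small, the equation is uniformly elliptic with smooth coefficients in $\bar w$, and strip Schauder estimates give $\|\bar w\|_{C^{k,\alpha}(Q_s)}\le C_ke^{-\delta s}$. Translating back to $x$ gives exactly the $\cC^{k,\alpha}_{\Theta;\delta}$ norm.

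\textbf{Part (\ref{item:regularity-end-local-convergence}).} On a compact set $K'\subset\overline{B_1}\setminus\{0\}$, convergence in $L^{p+1}$ makes the potentials $Y_j^{p-1}$ uniformly integrable in $L^{n/2}$. This holds uniformly, since the $Y_j^{p-1}$ converge in $L^{n/2}$. The Moser/Brezis--Kato step then gives uniform $L^\infty$ bounds on a slightly smaller set, including across the Kelvin-extended collar. Schauder estimates with the smoothly converging coefficients of $\gamma_j$ give uniform $C^{k,\alpha}$ bounds, and by Arzel\`a--Ascoli and uniqueness of the limit, $Y_j\to Y$ smoothly.

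\textbf{Part (\ref{item:regularity-end-end-convergence}).} This is the main obstacle, because the estimates must be uniform in $j$ up to the puncture. We write $\bar Y_j-\bar Y=(v_*(\cdot+T_j)-v_*(\cdot+T))+\bar w_j$. The first piece tends to zero in every $C^k$ on the cylinder, since $v_*$ is smooth and periodic. For $\bar w_j$ we use the strip equation from (\ref{item:regularity-end-weighted}), with potential built from the uniformly bounded profiles $\bar Y_j,\bar Y$ (uniformly bounded by (\ref{item:regularity-end-weighted}) applied to $Y$ and the smallness of $\bar w_j$). The $H^1$ strip norm of $\bar w_j$ is at most $e^{-\delta s}\|w_j\|_{H^1_\delta}\to0$ uniformly in $s$. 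The same Moser-then-Schauder bootstrap on strips, with constants independent of $s$ and $j$, gives $\sup_s\|\bar w_j\|_{C^k(Q_s)}\to0$, which is convergence in $\cC^k_{\Theta;0}$. The delicate point is the uniform $L^\infty$ bound for $\bar w_j$ at the first step, where only $H^1$ smallness is available; there we use smallness in $L^{p+1}(Q_s^*)$ in the $\varepsilon$-regularity form of Moser iteration.
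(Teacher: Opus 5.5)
Your proposal is correct, and its overall architecture matches the paper's. You use positivity by testing with $Y_-$ against the coercive form $Q_\gamma$; regularity from Brezis--Kato/Moser, $W^{2,q}$ and Schauder estimates, with the Kelvin extension across $\partial B_1$; and unit-scale estimates near the puncture for the weighted statements. Your strips $Q_s$ are the paper's rescaled annuli under $t=-\log r$, and there the $H^1_\delta$ hypothesis makes the local $H^1$ and $L^{p+1}$ norms of the correction decay like $e^{-\delta s}$.

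The real difference is where the $L^\infty$ bounds come from. The paper proves the $\varepsilon$-regularity estimate \eqref{eq:epsilon-regularity} once, by a blow-up (point-picking) argument for positive solutions. It applies that estimate to the full functions $Y_\rho$ and $Y_j$, and only afterwards treats the corrections through linear equations with bounded coefficients such as \eqref{eq:end-secant}. You instead run the Brezis--Kato iteration directly on the difference equations. You split the potential into a bounded part and a part with small $L^{n/2}$ norm on each strip or ball: $C|\bar w|^{p-1}$ in (ii), or the uniformly integrable $Y_j^{p-1}$ in (iii). Your route is more quantitative: it gives the linear bound $\sup_{Q_s}|\bar w|\le C\|\bar w\|_{H^1(Q_s^*)}$ in one step, with no need to bound the full solution first. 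The paper's route packages the nonlinear input into one reusable statement and then needs only bounded-coefficient linear theory.

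A few small steps are missing from your write-up.
\begin{enumerate}
\item In (i), before invoking the strong maximum principle you must rule out $Y\equiv0$. That would force $w=-A$, hence $\int r^{-2}w^2=\infty$.
\item Smoothness beyond $C^3$ in (i), and uniform $C^k$ bounds for large $k$ in (iii), require a positive lower bound first, because $t_+^p$ is only $C^{1,p-1}$ at $0$. So in (i) prove positivity before the full bootstrap. In (iii) the lower bound comes from uniform convergence to $Y>0$.
\item In (iv), $\bar w_j$ satisfies an inhomogeneous equation whose source is $O(|T_j-T|)$, coming from the phase shift. Either record this source, or work with the homogeneous equation for $\bar Y_j-\bar Y$, whose $L^2$ norm on each strip is $O(|T_j-T|+\|w_j\|_{H^1_\delta})$.
\item The strips adjacent to $|x|=r_0$ should be handled by the compact convergence from (iii).
\end{enumerate}
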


\begin{proof}
\step{Step 1. Positivity and the boundary identification.}
On $E=\{Y<0\}$, one has $|w|\ge A\ge c\Theta$. Thus
$Y_-\le|w|$ and
\[
|DY_-|\le\mathbf1_E(|DA|+|Dw|)
\le C r^{-1}|w|+|Dw|.
\]

These bounds give $Y_-\in H^1_\delta$ with even boundary trace.
Testing with radial cutoffs of $Y_-$ at radii $\varepsilon$ and
$2\varepsilon$, the cutoff errors are bounded by
\[
C\int_{B_{2\varepsilon}\setminus B_\varepsilon}
\bigl(|Dw|^2+r^{-2}|w|^2\bigr)\dd x\to0.
\]

Hence $Q_\gamma(Y_-,Y_-)=0$, and coercivity gives $Y_-=0$.
Also $Y\not\equiv0$: otherwise $w=-A$ would violate
$\int r^{-2}w^2<\infty$, since $A\ge c r^{-(n-2)/2}$.
In the boundary collar, the weak form \eqref{eq:weak-robin-form}
is precisely the weak matching condition for the Kelvin extension.
The Kelvin extension therefore satisfies the equation weakly
across the sphere. Applying the following interior estimates in the
extended collar charts yields smoothness and strict positivity there.

\step{Step 2. Local smoothness.}
On a compact coordinate ball, $n(n-2)Y^{p-1}\in L^{n/2}$.
For $M>0$, put $Y_M=\min\{Y,M\}$ and test with
$\eta^2Y Y_M^{2m-2}$, where $m\ge1$ and $\eta$ is a smooth cutoff supported in that ball.
Applying the local Sobolev inequality and Young's inequality, we obtain
\[
\|\eta Y Y_M^{m-1}\|_{L^{p+1}(X,\gamma)}^2
\le Cm^2\int_X\bigl[|\nabla\eta|_\gamma^2+
(1+n(n-2)Y^{p-1})\eta^2\bigr]Y^2Y_M^{2m-2}\dd V_\gamma.
\]

Split the potential into a bounded part and an $L^{n/2}$ tail with norm
less than $(2Cm^2)^{-1}$. H\"older's inequality absorbs the tail.
Letting $M\to\infty$ shows that local $L^{2m}$ integrability implies
local $L^{2mn/(n-2)}$ integrability on a smaller ball. Starting with
$Y\in L^{p+1}_{\rm loc}$, iteration proves local integrability to every finite exponent.
Interior $W^{2,m}$ estimates imply $C^{1,\alpha}$ regularity.
The equation then implies $C^{2,\alpha}$ regularity, and the strong
maximum principle establishes $Y>0$. The power nonlinearity is now smooth on each compact set,
and repeated Schauder estimates prove smoothness.

\step{Step 3. Local bounds from small $L^{p+1}$ integrals.}
For a family of charts with uniform ellipticity and $C^2$ bounds there
are $r_*,\varepsilon_0,C>0$ such that positive solutions satisfy
\begin{equation}\label{eq:epsilon-regularity}
\int_{B_{2r}(x)}Y^{p+1}\dd V_\gamma<\varepsilon_0
\quad\Longrightarrow\quad
\sup_{B_r(x)}Y\le Cr^{-(n-2)/2},\qquad r\le r_*.
\end{equation}
After scaling to $B_2$, suppose a sequence $Y_j$ with metrics $\gamma_j$
violates a uniform bound on $B_1$. Choose $x_j$ maximizing
$(3/2-|x|)^{(n-2)/2}Y_j(x)$ on $B_{3/2}$, and let $\sigma_j=3/2-|x_j|$, $\mu_j=Y_j(x_j)^{-2/(n-2)}$.
Then $\sigma_j/\mu_j\to\infty$ and
\[
\widetilde Y_j(y)=\mu_j^{(n-2)/2}Y_j(x_j+\mu_jy)
\]
satisfies
\[
\widetilde Y_j(0)=1,\qquad
0<\widetilde Y_j\le2^{(n-2)/2}
\quad\text{on }B_{\sigma_j/(2\mu_j)}.
\]

The rescaled coefficients are uniformly elliptic, the right sides are
bounded, and the rescaled scalar curvature coefficient is
$c(n)\mu_j^2\Scal_{\gamma_j}(x_j+\mu_jy)$.
By interior estimates, there is a fixed $\rho>0$ such that
$\widetilde Y_j\ge1/2$ on $B_\rho$. Critical scaling yields
\[
\int_{B_2}Y_j^{p+1}\dd V_{\gamma_j}
\ge\int_{B_{\rho\mu_j}(x_j)}Y_j^{p+1}\dd V_{\gamma_j}
\ge c\int_{B_\rho}\widetilde Y_j^{p+1}\dd y\ge c'>0.
\]
Choosing $\varepsilon_0<c'$ proves \eqref{eq:epsilon-regularity}.

\step{Step 4. Local convergence.}
Let $Y_j\to Y>0$ in $L^{p+1}_{\rm loc}$ and fix a compact set $K_0\subset\overline{B_1}\setminus\{0\}$.
Choose finitely many coordinate balls $B_{2r}(x)$ whose concentric balls
$B_r(x)$ cover $K_0$ and satisfy
$\int_{B_{2r}(x)}Y^{p+1}\dd V_\gamma<\varepsilon_0/2$.
For large $j$, we have
$\int_{B_{2r}(x)}Y_j^{p+1}\dd V_{\gamma_j}<\varepsilon_0$
on each chosen ball. By \eqref{eq:epsilon-regularity} and elliptic compactness,
\[
\|Y_j-Y\|_{C^0(K_0)}\to0
\qquad\text{and}\qquad
\inf_{K_0}Y_j\ge\tfrac12\inf_{K_0}Y>0.
\]
Passing to the limit in the weak equations gives
$-L_\gamma Y=n(n-2)Y^p$. The coefficients converge smoothly. Applying interior estimates to
\[
-L_{\gamma_j}(Y_j-Y)
=n(n-2)(Y_j^p-Y^p)+(L_{\gamma_j}-L_\gamma)Y,
\]
and using the positive lower bound, we obtain
$\|Y_j-Y\|_{C^k(K_0)}\to0$ for every fixed $k$.

\step{Step 5. Weighted estimates near the puncture.}
To prove the local regularity assertion, let $Y>0$ be a smooth
solution of $\Fop_\gamma(Y)=0$ on a punctured neighborhood of $0$,
with $w_0:=Y-V_T\in H^1_\delta$ there.
Choose $r_0>0$ so that $\gamma=\gamma_0$ and this membership holds
on $0<r<r_0$.
For $0<\rho<r_0/4$, on
$\mathcal A^*=B_4\setminus\overline{B_{1/4}}$ set
\[
Y_\rho(y)=\rho^{(n-2)/2}Y(\rho y),\quad
W_\rho(y)=\rho^{(n-2)/2}w_0(\rho y),\quad
P_\rho(y)=|y|^{-(n-2)/2}v_*(-\log\rho-\log|y|+T).
\]

The profiles $P_\rho$ have positive lower bounds and uniform bounds
for all derivatives on $\mathcal A^*$. Scaling the weighted norm gives
\[
\|W_\rho\|_{H^1(\mathcal A^*)}
+\|W_\rho\|_{L^{p+1}(\mathcal A^*)}
\le C\rho^\delta\|w_0\|_{H^1_\delta(0<r<r_0)}.
\]

Cover a slightly smaller annulus by finitely many balls whose
doubled balls lie in $\mathcal A^*$. Choose their radii so that the
integral of $P_\rho^{p+1}$ on each doubled ball is less than
$\varepsilon_0/2^{p+2}$, uniformly in $\rho$.
The displayed norm estimate gives
$\int_{\mathcal A^*}|W_\rho|^{p+1}\dd y<\varepsilon_0/2^{p+2}$
for small $\rho$. Since $Y_\rho=P_\rho+W_\rho$ and
$Y_\rho^{p+1}\le2^p(P_\rho^{p+1}+|W_\rho|^{p+1})$,
\eqref{eq:epsilon-regularity} bounds $Y_\rho$ uniformly on the
original balls. Since both $Y_\rho$ and $P_\rho$ solve the flat equation,
\begin{equation}\label{eq:end-secant}
\left[-\Delta_y-n(n+2)\int_0^1
       (P_\rho+\xi W_\rho)^{p-1}\dd\xi\right]W_\rho=0.
\end{equation}

The coefficient is bounded on a larger annulus than
$\mathcal A=B_2\setminus\overline{B_{1/2}}$. The local boundedness
estimate for this linear equation gives
\[
\|W_\rho\|_{C^0(\mathcal A)}
\le C\|W_\rho\|_{L^2(\mathcal A^*)}
\le C\rho^\delta\|w_0\|_{H^1_\delta}.
\]
Consequently $Y_\rho$ is uniformly positive for small $\rho$.
On nested annuli, the equation for $Y_\rho$ gives uniform smooth
bounds, and \eqref{eq:end-secant} then gives
$\|W_\rho\|_{C^{k,\alpha}(\mathcal A)}\le C_k\rho^\delta$.
By the definition of the weighted norm,
$w_0\in\cC^{k,\alpha}_{\Theta;\delta}$.

\step{Step 6. Uniform weighted convergence.}
For the limit $Y$ in \eqref{eq:end-convergence-data}, set
$w_0:=Y-V_T$ near $0$.
By Step 5,
$w_0\in\cC^{k,\alpha}_{\Theta;\delta}$ near $0$
for every fixed $k\ge0$ and $0<\alpha<1$.
Put
$A_j:=Y+V_{T_j}-V_T=V_{T_j}+w_0$.
On a sufficiently small fixed punctured ball, $A_j/\Theta$
has uniform positive upper and lower bounds, and all its
scaled derivatives are uniformly bounded for large $j$.
Scaling the weighted norm gives
\[
\|\rho^{(n-2)/2}w_j(\rho\,\cdot)\|_{L^{p+1}(\mathcal A^*)}
\le C\rho^\delta\|w_j\|_{H^1_\delta(0<r<r_0)}.
\]

The normalized functions $\rho^{(n-2)/2}A_j(\rho\,\cdot)$ are
uniformly bounded on $\mathcal A^*$. Applying the argument in
Step 5 to $Y_j=A_j+w_j$, we cover a neighborhood of
$\overline{\mathcal A}$ by finitely many balls whose doubled balls
lie in $\mathcal A^*$. The integral of
$|\rho^{(n-2)/2}Y_j(\rho y)|^{p+1}$ over each doubled ball is less
than $\varepsilon_0$ for large $j$ and small $\rho$.
Equation~\eqref{eq:epsilon-regularity} therefore
bounds $\rho^{(n-2)/2}Y_j(\rho\,\cdot)$ uniformly for large $j$.
The difference $w_j=Y_j-A_j$ satisfies
\[
\left[-\Delta-n(n+2)\int_0^1(A_j+\xi w_j)^{p-1}\dd\xi\right]w_j
=-\Fop_{\gamma_0}(A_j).
\]

Since $Y,V_{T_j},V_T$ solve their equations,
\[
\Fop_{\gamma_0}(A_j)=-n(n-2)
\bigl[(V_{T_j}+w_0)^p-V_{T_j}^p-(V_T+w_0)^p+V_T^p\bigr].
\]
After dividing the four powers by $\Theta^p$, apply the mean
value theorem first to the increment $w_0/\Theta$ and then
to the Fowler phase between $T$ and $T_j$.
This gives the factor $|T_j-T|$, while every spatial derivative
still contains $w_0$ or one of its derivatives.  Since $\Theta^{p-1}=r^{-2}$,
\[
\|r^2\Fop_{\gamma_0}(A_j)\|_{\cC^{k,\alpha}_{\Theta;\delta}}
\le C_k|T_j-T|\to0.
\]
The factor $r^2$ accounts for the order of the differential operator.
The inhomogeneous interior estimate on each rescaled annulus yields
\[
\|w_j\|_{\cC^0_{\Theta;\delta}}
\le C\left(\|w_j\|_{H^1_\delta}
       +\|r^2\Fop_{\gamma_0}(A_j)\|_{\cC^0_{\Theta;\delta}}\right)\to0
\]
on a smaller fixed punctured ball. Uniform positivity and repeated
interior estimates give $\|w_j\|_{\cC^k_{\Theta;\delta}}\to0$ for each fixed $k$.
Since $\|V_{T_j}-V_T\|_{\cC^k_{\Theta;0}}\le C_k|T_j-T|$,
$\|Y_j-Y\|_{\cC^k_{\Theta;0}}\to0$ there. The compact convergence in Step 4
covers the remaining annulus and proves the assertion for $0<r<r_0$.
\end{proof}

\subsection*{AI assistance and interactions}

This paper grew out of a project that began in mid July 2026 with an investigation of whether the main conclusion of \cite{HanXiongZhang2023} could fail without the assumption that the metric has flatness order at least \((n-2)/2\). We made progress gradually, first in dimension seven, then in dimensions up to 10,  then  in dimensions up to 24; and found by early August that the threshold dimension is 30:  the upper bound holds in dimensions \(7\le n \le 29\) and there are counter-examples to the upper bound in dimensions \(n\ge 30\). By August 9, we had prepared initial drafts of the proofs of the classification and upper bound theorems in dimensions \(7\le n \le 29\), whose final statements and proofs appear in \cite{YamabePartI},  and an exploratory draft of a possible counterexample in dimension \(30\). We then began an extensive verification, correction, addition, and revision process. 
In particular, the initial draft counterexample defined an incomplete conformal metric. During this process, we found new counterexamples in all dimensions \(\ge 30\) with a complete conformal metric, as presented in this paper. 

In this process, the coauthors used ChatGPT  and Codex as interactive research-assistance tools for literature reading, exploratory proof searches, calculations, checking arguments, and manuscript preparation. The systems responded to specific questions and directions from the coauthors. The coauthors determined the research questions and proof strategies, supplied and verified the citations and attributions, and selected, checked, and substantially revised every adopted argument. This process involved repeated exchanges among the coauthors and the systems, dozens of manuscript versions, and extensive human verification and rewriting. The coauthors are responsible for all mathematical claims, citations, and final content. 

\subsection*{Conflict of interest} 

The authors declare that they have no conflicts of interest.

\bigskip
\begingroup
\small
\Needspace{7\baselineskip}

\noindent\textsc{Zheng-Chao Han}\\
Department of Mathematics, Rutgers University,\\
Hill Center--Busch Campus, 110 Frelinghuysen Road,\\
Piscataway, NJ 08854, USA\\
\textit{Email address:}
\href{mailto:zchan@math.rutgers.edu}{\nolinkurl{zchan@math.rutgers.edu}}

\medskip
\Needspace{6\baselineskip}

\noindent\textsc{Qinfeng Jiang}\\
School of Mathematical Sciences, Beijing Normal University,\\
Beijing 100875, China\\
\textit{Email address:}
\href{mailto:202531130031@mail.bnu.edu.cn}%
{\nolinkurl{202531130031@mail.bnu.edu.cn}}

\medskip
\Needspace{7\baselineskip}

\noindent\textsc{Hua-Yang Wang}\\
School of Mathematical Sciences, Laboratory of Mathematics and Complex
Systems, MOE,\\
Beijing Normal University, Beijing 100875, China\\
\textit{Email address:}
\href{mailto:wanghuayang@amss.ac.cn}{\nolinkurl{wanghuayang@amss.ac.cn}}

\medskip
\Needspace{10\baselineskip}

\noindent\textsc{Jingang Xiong}\\
School of Mathematical Sciences, Laboratory of Mathematics and Complex
Systems, MOE,\\
Beijing Normal University, Beijing 100875, China;\\
Center for Basic Mathematics, Institute for Advanced Study,\\
Beijing Normal University, Beijing 100875, China\\
\textit{Email address:}
\href{mailto:jx@bnu.edu.cn}{\nolinkurl{jx@bnu.edu.cn}}

\medskip
\Needspace{7\baselineskip}

\noindent\textsc{Lei Zhang}\\
Department of Mathematics, University of Florida,\\
358 Little Hall, P.O. Box 118105,\\
Gainesville, FL 32611-8105, USA\\
\textit{Email address:}
\href{mailto:leizhang@ufl.edu}{\nolinkurl{leizhang@ufl.edu}}
\endgroup

\end{document}